\newlength\mylen
\tikzset{
	bicolor/.style 2 args={
		dashed,dash pattern=on 20pt off 20pt,-,#1,
		postaction={draw,dashed,dash pattern=on 20pt off 20pt,-,#2,dash phase=20pt}
	},
}
\definecolor{palecerulean}{RGB}{137, 193, 206}
\apptocmd{\sloppy}{\hbadness 10000\relax}{}{}
\newtheorem{theorem}{Theorem}[section]
\newtheorem{definition}[theorem]{Definition}
\newtheorem{proposition}[theorem]{Proposition}
\newtheorem{lemma}[theorem]{Lemma}
\newtheorem{remark}[theorem]{Remark}
\newtheorem*{remark*}{Remark}
\newtheorem{corollary}[theorem]{Corollary}
\numberwithin{equation}{section}
\numberwithin{theorem}{section}
\numberwithin{equation}{section}
\numberwithin{theorem}{section}
\theoremstyle{remark}
\newtheorem{example}[theorem]{Example}
\newtheorem{cexample}[theorem]{Counterexample}
\newcommand{\N}{\mathbb{N}}
\newcommand{\R}{\mathbb{R}}
\colorlet{red2}{red!65!black}
\DeclareMathSymbol{\shortminus}{\mathbin}{AMSa}{"39}
\newcommand{\vertiii}[1]{{\left\vert\kern-0.25ex\left\vert\kern-0.25ex\left\vert #1
		\right\vert\kern-0.25ex\right\vert\kern-0.25ex\right\vert}}
\newcommand{\sbullet}{%
	\hbox{\fontfamily{lmr}\fontsize{.6\dimexpr(\f@size pt)}{0}\selectfont\textbullet}}
\providecommand\phantomcaption{\caption@refstepcounter\@captype}
\title[Non-local random walks]{Compatibility, embedding and regularization of non-local random walks on graphs}
\author{Davide Bianchi}
\address{School of Science\\
	Harbin Institute of Technology (Shenzhen)\\
	HIT Campus of University Town of Shenzhen\\
	518055 Shenzhen, CHINA}
\email{bianchi@hit.edu.cn}
\author{Marco Donatelli}
\address{Dipartimento di Scienze e Alta Tecnologia\\
	Universit\`a degli Studi dell'Insubria\\
	via Valleggio 11\\
	22100 Como, ITALY}
\email{marco.donatelli@uninsubria.it}
\author{Fabio Durastante}
\address{Dipartimento di Matematica\\
         Università di Pisa\\
         Largo Bruno Pontecorvo 5\\ 
         56127 Pisa, ITALY\\
         \& Istituto per le Applicazioni del Calcolo ``M. Picone''\\
	    Consiglio Nazionale delle Ricerche\\
	    Via Pietro Castellino 111\\
	    80131 Naples, ITALY}
\email{fabio.durastante@unipi.it}
\author{Mariarosa Mazza}
\address{Dipartimento di Scienze Umane e dell'Innovazione per il Territorio\\
	Universit\`a degli Studi dell'Insubria\\
	via Valleggio 11\\
	22100 Como, ITALY}
\email{mariarosa.mazza@uninsubria.it}
\begin{document}

\begin{abstract}
Several variants of the graph Laplacian have been introduced to model non-local diffusion processes, which allow a random walker to {\textquotedblleft jump\textquotedblright} to non-neighborhood nodes, most notably the transformed path graph Laplacians and the fractional graph Laplacian. From a rigorous point of view, this new dynamics is made possible by having replaced the original graph $G$ with a weighted complete graph $G'$ on the same node-set, that depends on $G$ and wherein the presence of new edges allows a direct passage between nodes that were not neighbors in $G$. 

We show that, in general, the graph $G'$ is not compatible with the dynamics characterizing the original model graph $G$: the random walks on $G'$ subjected to move on the edges of $G$ are not stochastically equivalent, in the wide sense, to the random walks on $G$. From a purely analytical point of view, the incompatibility of $G'$ with $G$ means that the normalized graph $\hat{G}$ can not be embedded into the normalized graph $\hat{G}'$. Eventually, we provide a regularization method to guarantee such compatibility and preserving at the same time all the nice properties granted by $G'$.    
\end{abstract}

\maketitle %

\noindent \textbf{Keywords:} fractional graph Laplacian, path graph Laplacian, non-local dynamics.

\noindent \textbf{2020 MSC:} 05C81, 05C82, 05C90, 35R11, 60G22.

\section{Introduction}
Building non-local dynamics on graphs have been the object of deep study in many papers during the last years, allowing the random walkers to perform {\textquotedblleft long-range\textquotedblright} jumps and  to move on non-neighbors nodes. We refer to \cite{Laskin2006,Estrada,RM12,RM14,Zhao2014,Michelitsch2017,EDHMMRS} as main references and without the claim of completeness, since it is difficult to take records of all the papers and developments which arose from such original works and different fields of science. The anomalous diffusion can provide a more accurate description of the dynamics occurring in many phenomena, such as, but not just restrained to, interactions in social network and human environments (\cite{Wu2013,Baronchelli2013,Zhao2015}), diffusion processes of particles on surfaces (\cite{Wrigley1990,Senft1995,Schunack2002,Ala-Nissila2002}), porous media and disordered systems (\cite{Bouchaud1990}), etc. On the other hand, adding long-range transitions between nodes improves features like the average hitting time or the search-ability/navigability of a network. 

Starting from a given graph $G$, the main idea is to admit the passage of the random walker to nodes which are not neighbors, in a one-step move with a transition probability that depends on the combinatorial distance. There are several ways to achieve that. In 2012 two different groups independently introduced a new model to describe long-range jumps for a random walker on finite and simple graphs, \cite{Estrada,RM12}. Their approach was basically the same but from two different point of views, an algebraic one and a pure probabilistic one, respectively. The common ground is that a random walker can jump from a node to a non-neighbor node with a probability that is manually provided and that decays as a power-law of the combinatorial graph distance separating the two nodes. Rigorously speaking, this kind of dynamics is made possible by having replaced the original graph $G$ with a weighted complete graph $G'$, that depends on $G$ and wherein the presence of new edges allows a direct passage between nodes that were not neighbors in $G$, see Figure~\ref{fig:overgraph}. We will refer to those operators, and to all their subsequent modifications and generalizations that appeared later on, as \emph{path} graph Laplacians, see Subsection \ref{ssection:path_Laplacian}.

Another way to approach the same problem, that is, to introduce the possibility for a random walker to make long-range jumps, is to consider a fractional power $\alpha \in (0,1)$ of the graph Laplacian $\Delta$ associated to the original graph $G$, see Subsection \ref{ssec:fractional_graph_laplacian}. The new dynamics is obtained again thanks to the creation of a new complete graph $G'$ induced by the operator~$\Delta^\alpha$, and as an example we refer one more time to Figure~\ref{fig:overgraph}. At the best of our knowledge, the first authors to study the \emph{fractional} graph Laplacian on finite networks were A. P. Riasco and J. L. Mateos in \cite{RM14}. Even in this case, the fractional graph Laplacian is characterized by having transition probabilities that decay to zero as an $\alpha$-power of the combinatorial graph distance.

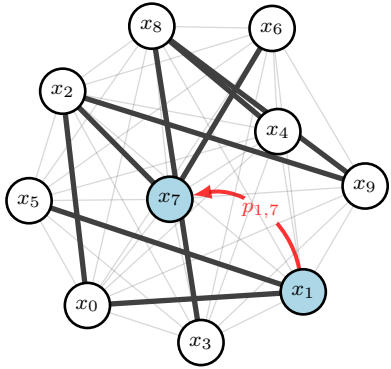
\begin{figure}[htbp]
    \centering
    \begin{tikzpicture}
        \Vertex[x=1387/938,y=-3421/2306,label=x_1,Math]{A}
        \Vertex[x=-1221/878,y=-1017/611,label=x_0,Math,color=white]{B}   
        \Vertex[x=-98/57,y=881/743,label=x_2,Math,color=white]{C}
        \Vertex[x=313/2627,y=-1487/689,label=x_3,Math,color=white]{D} 
        \Vertex[x=1891/1657,y=1141/1760,label=x_4,Math,color=white]{E}
        \Vertex[x=-1666/769,y=-903/3328,label=x_5,Math,color=white]{F}
        \Vertex[x=608/571,y=1875/928,label=x_6,Math,color=white]{G}  
        \Vertex[x=-521/1769,y=-1871/7432,label=x_7,Math]{H}  
        \Vertex[x=-694/1299,y=527/257,label=x_8,Math,color=white]{I} 
        \Vertex[x=1031/448,y=-311/4103,label=x_{9},Math,color=white]{L}
        \Edge[lw=2](A)(B)
        \Edge[lw=2](B)(C)
        \Edge[lw=2](C)(H)
        \Edge[lw=2](C)(L)
        \Edge[lw=2](G)(H)
        \Edge[lw=2](E)(I)
        \Edge[lw=2](I)(L)
        \Edge[lw=2](F)(A)
        \Edge[lw=2](D)(I)
        \Edge[lw=0.5,opacity=0.2](A)(C)
        \Edge[lw=0.5,opacity=0.2](A)(D)
        \Edge[lw=0.5,opacity=0.2](A)(E)
        \Edge[lw=0.5,opacity=0.2](A)(F)
        \Edge[lw=0.5,opacity=0.2](A)(G)
        \Edge[lw=1.5,opacity=0.8,color=red,bend=-45,label=p_{1,7},Math, Direct](A)(H)
        \Edge[lw=0.5,opacity=0.2](A)(I)
        \Edge[lw=0.5,opacity=0.2](A)(L)
        \Edge[lw=0.5,opacity=0.2](B)(D)
        \Edge[lw=0.5,opacity=0.2](B)(E)
        \Edge[lw=0.5,opacity=0.2](B)(F)
        \Edge[lw=0.5,opacity=0.2](B)(G)
        \Edge[lw=0.5,opacity=0.2](B)(H)
        \Edge[lw=0.5,opacity=0.2](B)(I)
        \Edge[lw=0.5,opacity=0.2](B)(L)
        \Edge[lw=0.5,opacity=0.2](C)(D)
        \Edge[lw=0.5,opacity=0.2](C)(E)
        \Edge[lw=0.5,opacity=0.2](C)(F)
        \Edge[lw=0.5,opacity=0.2](C)(G)
        \Edge[lw=0.5,opacity=0.2](C)(I)
        \Edge[lw=0.5,opacity=0.2](D)(F)
        \Edge[lw=0.5,opacity=0.2](D)(H)
        \Edge[lw=0.5,opacity=0.2](D)(I)
        \Edge[lw=0.5,opacity=0.2](D)(G)
        \Edge[lw=0.5,opacity=0.2](D)(L)
        \Edge[lw=0.5,opacity=0.2](E)(D)
        \Edge[lw=0.5,opacity=0.2](E)(F)
        \Edge[lw=0.5,opacity=0.2](E)(G)
        \Edge[lw=0.5,opacity=0.2](E)(H)
        \Edge[lw=0.5,opacity=0.2](F)(G)
        \Edge[lw=0.5,opacity=0.2](F)(H)
        \Edge[lw=0.5,opacity=0.2](F)(I)
        \Edge[lw=0.5,opacity=0.2](G)(H)
        \Edge[lw=0.5,opacity=0.2](G)(I)
        \Edge[lw=0.5,opacity=0.2](G)(L)
        \Edge[lw=0.5,opacity=0.2](H)(I)
        \Edge[lw=0.5,opacity=0.2](H)(L)
    \end{tikzpicture}
    \caption{A random walker performing long-range transitions on the graph $G$, whose edges are highlighted by a thick line, can move in one step from node $x_1$ to node $x_7$ with a non-zero transition probability $p_{1,7}$. It means that there exists an edge connecting them directly in a graph $G'$ in which $G$ is a subgraph, in this case the complete graph $G'$ for which the edges are depicted in a thin gray line. The long-range dynamics is nothing more than a walker moving on a graph $G'$ in which $G$ is a subgraph, thus giving the appearance that a far away node could be connected without altering the topology of $G$.}
    \label{fig:overgraph}
\end{figure}

A natural question arises: are the dynamics on the {\textquotedblleft old\textquotedblright} walks along the edges of $G$ compatible with the new dynamics? Indeed, it would be desirable to introduce long-range jumps but preserving at the same time the original dynamics if we move along the edges of $G$. In other words, for any time-interval where does not take place any long-range jump, a random walk on $G'$ should be indistinguishable from the original random walk on $G$. One can easily figure this by a simple but clarifying example: let us suppose that our random walker is surfing the Net (the original graph $G$), and just for the sake of simplicity let us suppose that the Net is undirected. The walker then can move towards linked web-pages with a probability which can be both uniform on the number of total links or dependent on some other parameters. Suppose now that we allow the walker to jump from one web-page to non-linked web-pages by just typing an URL address in the navigation bar, so that he can virtually reach directly any possible web-pages on the Net (the induced graph $G'$). If in any moment, for any reason, the walker is forced again to surf the Net by just following the links, then we should see him moving exactly as he used to do, namely the probability he moves to the next linked web-page has to be the same as before. {More generally, we can suppose that we have a particle freely moving over a surface. There are experimental circumstances under which the particle does not obey a Brownian motion but makes instead ``long jumps''~\cite{Schunack2002,Wrigley1990}. To take into account this fact we change our model to a non-local one, nevertheless at a certain point in time we observe the particle reverting to the Brownian motion: the circumstances have changed. Our model needs now to accommodate both cases by having that the conditions imposed by the unknown change are respected.}

Unfortunately, in general, the induced complete graph $G'$, defined accordingly to the proposal in the literature, breaks the compatibility and the new models cease to be expressions of the original model $G$, see Example \ref{example:cycleembedding}. In Section \ref{sec:comp&embedding} we provide a rigorous definition of what we mean for \emph{compatibility} which stems from a probabilistic interpretation. As it often happens, the property of two graphs of being compatible is the other side of a pure analytic point of view which stems instead from semi-groups and evolution equations theory. This leads to the definition of \emph{embedding} between graphs, and in Theorem \ref{thm:SE&Embedding_equivalence} we prove the equivalence of compatibility and embedding. To avoid misunderstandings, the notion of embedding that we will use from Section~\ref{sec:comp&embedding} onward, it is not the standard one used in topological graph theory but rather it is close to a generalization of the notion of embedding used in differential geometry, although no references to $\R^d$ are made. Notably, equation \eqref{thm:embedding_eq1.2'} in Theorem \ref{thm:embedding} provides an effective computational way to determine whether two graphs are compatible/embeddable or not, which allows to find several practical examples, see Subsection \ref{ssec:counterexamples} and Section \ref{sec:numerical_examples}.     

We are then left in the difficult predicament of trying to add anomalous diffusion dynamics on a given model graph and at the same time maintaining compatibility with the original dynamics. In Section \ref{sec:regularized_fractional_graph-Laplacian}, we show that such an agreement between anomalous diffusion and compatibility can be achieved on any starting graph $G$ by a simple regularization technique on the complete graph $G'$. That allows preserving both the original dynamics of $G$ and the nice properties granted by $G'$. We conclude our analysis with a rich selection of numerical experiments, of both toy-model examples and real-world data. 

The paper is organized as follows:
\begin{itemize}
	\item Section \ref{sec:notation} is devoted to fix the notation and to recall the main known results, and it can be skipped by a reader proficient in graph theory and random walks.
	\item In Section \ref{sec:path&fractional_laplacians} we present the \emph{path} and \emph{fractional} graph Laplacians in the more general setting of weighted graphs. Those operators induce new complete graphs $G_\alpha$ and $G^\alpha$, respectively, which can be seen as supergraphs of $G$.
	\item In Section \ref{sec:comp&embedding} are presented the properties of \emph{compatibility} and \emph{embedding} between two graphs, along with several examples.
	\item In Section \ref{sec:regularized_fractional_graph-Laplacian} is provided a way to regularize the graphs $G_\alpha, G^\alpha$. 
	\item Section \ref{sec:numerical_examples} is the conclusive part of the paper, where several numerical examples are provided to confirm experimentally the theory developed in the preceding sections.
	\item Conclusions are drawn in Section \ref{sec:conclusions}.
\end{itemize}

\section{Setting the notation}\label{sec:notation}
For the reader convenience, Table \ref{tab:symbols} lists an overview of the symbols we will use through out the paper. For a proper and detailed introduction to graph theory see \cite{book_Radek} and \cite{Bollobas}, although we point out that in this work we are going to use a slight different notation.

\begin{table}%
\begin{center}\caption{List of symbols.}
	\begin{tabular}{llrr}\label{tab:symbols}
		$\boldsymbol{1}$ & the counting measure& $G$ & a graph equipped with the counting measure\\
		$\deg$ & the degree function&$\hat{G}$ & a graph equipped with the degree measure\\
		$\alpha\in (0,1)$ & fractional parameter&$G^\alpha$ & the fractional graph associated to $G$\\
		$\alpha\in (1,\infty)$ & path parameter&$G_\alpha$ & the path graph associated to $G$\\
		& & $\prescript{}{r}{G}^\alpha, \prescript{}{r}{G}_\alpha$ & regularized fractional/path graph\\
		& & $\Delta$, $\hat{\Delta}$ & graph Laplacian associated to $G$, $\hat{G}$\\
		& & $\Delta^\alpha, \Delta_\alpha$ & graph Laplacian associated to $G^\alpha, G_\alpha$\\
		& & $\hat{\Delta}^\alpha, \hat{\Delta}_\alpha$ & graph Laplacian associated to $\hat{G}^\alpha, \hat{G}_\alpha$\\
	& & $\prescript{}{r}{\hat{\Delta}}^\alpha, \prescript{}{r}{\hat{\Delta}}_\alpha$ & graph Laplacian associated to $\prescript{}{r}{\hat{G}}^\alpha, \prescript{}{r}{\hat{G}}_\alpha$
	\end{tabular}
\end{center}
\end{table}

\subsection{Graph}  we define \textit{graph}  the triple $G=(X,w,\mu)$, where
\begin{itemize}
	\item $X=\left\{x_i : i\in I\right\}$ is the set of nodes and $I$ is a finite set of indices; 
	\item $w: X\times X \to [0,\infty)$ is a nonnegative edge-weight function; 
	\item $\mu : X \to (0,\infty)$ is a positive (atomic) measure over $X$.
\end{itemize}
 We will assume the following properties to hold:
\begin{enumerate}[label={\upshape(\bfseries G\arabic*)},wide = 0pt, leftmargin = 3em]
	\item\label{assumption:symmetry} $w$ is symmetric, i.e.,  for every couple of nodes $x_i,x_j$ we have $w(x_i,x_j)=w(x_j,x_i)$. 
	\item\label{assumption:loops} no loops allowed: $w(x_i,x_i)=0$ for every $x_i \in X$.
\end{enumerate}
A graph that satisfies \ref{assumption:symmetry} is said to be \emph{undirected}. If $w(x_i,x_j) \in \{0,1\}$ for any pair $\{x_i,x_j\}$, then the graph is said to be unweighted, otherwise it will be called weighted. An undirected and unweighted graph is said \emph{simple}, and a graph such that $w(x_i,x_j)>0$ for every pair $x_i\neq x_j$ is called \emph{complete}. Every unordered pair of nodes $e=\{x_i,x_j\}$ such that $w(x_i,x_j)> 0$ is called \emph{edge} incident to $x_i$ and to $x_j$, and the collection $E$ of all the edges is uniquely determined by $w$. The non-zero values $w(x_i,x_j)$ of the edge-weight function $w$ are called \emph{weights} associated with the edges $\{x_i,x_j\}$.
In an undirected graph $G$, two nodes $x_i,x_j$ are said to be \textit{neighbors} (or \emph{connected}) in $G$ if $\{x_i,x_j\}$ is an edge and we write $x_i\sim x_j$. On the contrary, if $\{x_i,x_j\}$ is not an edge, we  write $x_i\nsim x_j$. We call \emph{walk} a (possibly infinite) sequence of nodes $(x_{i_k} \, : \, k=1,2,\ldots )$ such that $x_{i_k}\sim x_{i_{k+1}}$. A graph is \emph{connected} if there is a finite walk connecting every pair of nodes, that is, for any pair of nodes $x_i, x_j$ there is a finite walk such that $x_i = x_{i_1}\sim x_{i_2}\sim\cdots\sim x_{i_n}=x_j$. We will indicate with $A(G)$ the \emph{adjacency matrix} of $G$, that is, the matrix whose entries are given by the values of $w(\cdot,\cdot)$, i.e., 
$$
\left(A(G)\right)_{i,j}= w(x_i,x_j).
$$
The \emph{degree} of a node $x_i$ is given by
\begin{equation*}\label{def:deg}
\deg(x_i) = \sum_{x_j\in X} w(x_i,x_j).
\end{equation*}
We stress that the function $\deg(\cdot)$ depends on both the node-set $X$ and the edge-weight function $w$. We will indicate with $D(G)$ the diagonal matrix operator whose diagonal entries are given by the values of $\deg(\cdot)$, i.e., 
$$
\left(D(G)\right)_{i,i}= \deg(x_i),
$$
and we call it \emph{degree matrix} of $G$.

\subsection{Graph Laplacian} The set of real functions on $X$ is denoted by $C(X)$, and clearly $C(X)$ is isomorphic to $\R^n$, where $n=\#\{x_i \in X\}$. The operator  $\Delta: C(X) \to C(X)$, associated to the graph $G=(X,w,\mu)$, whose action is defined via
\begin{subequations}
	\begin{equation*}
	\Delta[u](x_i):= \frac{1}{\mu(x_i)}\sum_{\substack{x_j \in X}} w(x_i,x_j)\left(u(x_i) - u(x_j)\right),\label{formal_laplacian2}
	\end{equation*}
\end{subequations} 
is called (nonnegative) \emph{graph Laplacian} of $G$. In matrix form, it reads
\begin{equation}\label{def:graph-Laplacian_matrix_form}
\Delta = \textnormal{diag}(\mu)^{-1}\left[ D(G)-A(G)\right],
\end{equation}
where $\textnormal{diag}(\mu)$ is the diagonal matrix operator whose diagonal entries are given by the pointwise evaluation of $\mu$. The graph Laplacian is selfadjoint with respect to the inner product on $C(X)$
$$
\langle u, v \rangle := \sum_{x_i \in X} u(x_i)v(x_i)\mu(x_i).
$$
Since non-local dynamics for long-range jumps (see Section \ref{sec:path&fractional_laplacians}) were introduced for application purposes, and in order to not excessively digress, we will make the following extra assumptions:
\begin{enumerate}[label={\upshape(\bfseries G\arabic*)},wide = 0pt, leftmargin = 3em]
	\setcounter{enumi}{2}
	\item\label{assumption:connected} $G$ is connected;
	\item\label{assumption:measure} $\mu$ is restricted to be the counting measure $\boldsymbol{1}(x_i)\equiv 1$ or the degree measure, that is
	$$
 \mu = \boldsymbol{1} \quad\mbox{or}\quad  \mu=\deg.
	$$
\end{enumerate}
From Assumption \ref{assumption:connected} we have that $\deg$ is indeed a positive measure on $X$.

\subsection{Unnormalized and normalized graph} If not otherwise stated, $G$ will be equipped with the counting measure and in that case we call it \emph{unnormalized}. On the contrary, we will use the notation $\hat{G}$ to indicate a graph equipped with the degree measure, and we will call it \emph{normalized}. We will keep the notation $G=(X,w,\mu)$ whenever we do not specify whether the measure $\mu$ is the counting measure or the degree measure. Given a graph $G=(X,w,\boldsymbol{1})$ then we will call \emph{normalization} of $G$ the graph $\hat{G}=(X,w,\deg)$, that is, the graph given by the same node-set and edge-weight function of $G$ but equipped with the degree measure. We will use the notation $\hat{\Delta}$ to indicate the graph Laplacian associated to a normalized graph $\hat{G}$. Let us trivially observe that, if $\hat{G}$ is the normalization of $G$, then $D(\hat{G})=D(G)$ and $A(\hat{G})=A(G)$. Therefore, from \eqref{def:graph-Laplacian_matrix_form}, in matrix form the graph Laplacian of an unnormalized graph and of its normalization, respectively, read 
\begin{equation}\label{def:graph-Laplacian_matrix_form2}
\Delta =  D(G)-A(G), \qquad \hat{\Delta}= \operatorname{id} -  D(G)^{-1}A(G),
\end{equation}
where $\operatorname{id}: C(X) \to C(X)$ is the identity operator, $\operatorname{id}[u]=u$. 

\subsection{Subgraph}\label{ssec:subgraph}
We say that a general graph $G=(X, w, \mu)$ is a \emph{subgraph} of $G'=(X',w',\mu')$, and we write $G\subset G'$, if
\begin{itemize}
	\item $X\subseteq X'$;
	\item $w : X \times X \to [0, +\infty)$ is such that $w(x_i,x_j) \in \{0, w'(x_i,x_j)\}$;
	\item $\mu := \mu'_{|(X,w)}$.
\end{itemize} 
We call $G'$ a \emph{supergraph} of $G$. If $w=w'_{|X\times X}$, then it is said that $G$ is an \emph{induced} (or \emph{spanned}) subgraph of $G'$. Let us observe that, even if $X=X'$, in general $\deg'_{|(X,w)}\neq \deg'$.

\subsection{Random walk}\label{ssec:random_walk}
Given a graph $G=(X,w,\boldsymbol{1})$, a \emph{random walk} on $G$ is a sequence of discrete steps along the nodes $x_i$ which abides by the following rules: starting from a node $x_k$, we choose at random the next node where to move among the neighbors of $x_k$, with a probability proportional to the weights associated to the edges connecting the nodes; once we moved, we repeat the procedure.

More rigorously, a random walk on $G$ with initial distribution $\nu$ ({i.e., $\nu$ a nonnegative, finite measure on the powerset $2^X$ and such that $\sum_{k\in I} \nu(\{x_k\})=1$}) is a discrete-time Markov chain $(X_n)_{n\geq0}=(X_n \, : \, n \in \N)$ on the state-space $X$ such that, for every fixed $x_k \in X$ 
\begin{enumerate}[label={\upshape(\bfseries MC\arabic*)},wide = 0pt, leftmargin = 3em]
	\item\label{MC1} $\mathbb{P}(X_0=x_{k})= \nu(\{x_{k}\})$;
	\item\label{MC2} $\mathbb{P}(X_{n+1}=x_{j} \,|\, X_0=x_{k}, \ldots, X_{n}=x_{i})= \frac{w(x_{i},x_{j})}{\deg(x_{i})}$.
\end{enumerate}  
Specifically, under the assumptions \ref{MC1}-\ref{MC2} and \ref{assumption:symmetry}-\ref{assumption:connected}, $(X_n)_{n\geq0}$ is a homogeneous, irreducible and reversible Markov chain, characterized by the \emph{transition  matrix} $D(G)^{-1}A(G)$. For a proper introduction to Markov chains theory we refer to \cite{Norris1997,Durrett2010}. It is a habit to write
\begin{align*}\label{def:MC_transition_matrix}
&P:=D(G)^{-1}A(G), &(P)_{i,j}:= p_{i,j}= \frac{w(x_{i},x_{j})}{\deg(x_{i})},\\
&P(n):=P^n, & \left(P(n)\right)_{i,j}:=p_{i,j}^{(n)}.
\end{align*}
The entries $p_{i,j}$ are called (discrete) \emph{transition probabilities}. A \emph{continuous} random walk $(X_t)_{t\geq 0}=(X_t \, : \, t\in \R_+)$ can be though instead as a random walk whose times spent on each node are not homogeneous anymore but independent, negative exponential distributed random variables. Its transition matrix $P(t)$ is determined by the minimal nonnegative solution of the backward equation
\begin{equation*}
(P(t))_{i,j}:= p_{i,j}(t) \quad \mbox{sucht that} \quad\begin{cases}
\frac{d P(t)}{dt} + \hat{\Delta}P(t)=0,\\
P(0)=\operatorname{id},
\end{cases}
\end{equation*} 
where $\hat{\Delta}$ is the graph Laplacian associated to the normalization $\hat{G}$ of $G$. Therefore, given an initial distribution $\nu$ and the transition matrix $P$ or $P(t)$, the dynamics of a random walker on $G$ is well established. Since, from equation \eqref{def:graph-Laplacian_matrix_form2}, it holds that $\hat{\Delta}= \operatorname{id} - P$, then both $P$ and $P(t)$ are uniquely determined by the edge-weight function $w(\cdot,\cdot)$, and we say that $w$ \emph{induces the dynamics} of the random walks on $G$.
\begin{remark}
In this work we consider as time-continuous random walks on a given graph $G$ the standard \emph{continuation} of the discrete-time random walks. However, there exists another possible way which consists in taking the transition matrix $P(t):=\textrm{e}^{-t\Delta}$, where $\Delta$ is the graph Laplacian of $G$ itself. This approach goes by the name of \emph{fluid model} and even if in the case of an unweighted regular graph the two approaches are identical up to a deterministic time rescaling factor, it is not true in general. See \cite[Section 3.2]{DAldous}.        
\end{remark}

\section{Path and fractional graph Laplacians}\label{sec:path&fractional_laplacians}
In this section we generalize the ideas developed in \cite{Estrada,RM12,RM14} to the more general setting of weighted graphs. For two examples of a path graph Laplacian and a fractional graph Laplacian, look at Counterexample \ref{cexample:dpathlaplacian} and Counterexample \ref{example:1}, respectively.	
\subsection{The path graph Laplacian}\label{ssection:path_Laplacian}
Let us fix the functions $\delta, d_w: X\times X \to [0, +\infty)$, where
\begin{align*}
&\delta(x_i,x_j):= \min\left\{ n \in \N \, : \, x_i=x_{i_1}\sim x_{i_2} \sim \ldots \sim x_{i_n}= x_j \right\}
\end{align*}	
is the \emph{combinatorial graph distance} and $d_w$ is instead a generic distance function, but dependent to $w(\cdot,\cdot)$ as well. For a review on path pseudo-distances and intrinsic metrics, see \cite[Chapter 3]{Keller} and all the references therein. We call 
$$
\delta_\infty := \max\left\{ \delta(x_i,x_j) \, : \, x_i,x_j \in X  \right\}
$$
the \emph{combinatorial diameter} of $G$. Given a one parameter family of functions $h_\alpha : \R_+ \to (0,+\infty)$, $\alpha \in (0,+\infty)$, we can define the following operator,
\begin{equation}\label{eq:path-graph-operator}
\Delta_{\alpha} :=  \sum_{n=1}^{\delta_\infty} \Delta_{\alpha,n}, \qquad \mbox{where}\qquad \Delta_{\alpha,n}[u](x_i):=  \sum_{x_j\in X} \kappa_{\alpha,n}(x_i,x_j)(u(x_i) - u(x_j)) 
\end{equation}
and
\begin{equation}\label{eq:path-graph}
\kappa_{\alpha,n}(x_i,x_j):= \begin{cases}
h_\alpha(d_w(x_i,x_j)) & \mbox{if } \delta(x_i,x_j)=n,\\
0& \mbox{otherwise}.
\end{cases}
\end{equation}
Let us observe that whenever $G$ is unweighted, then fixing $d_w=\delta$ and
$$
h_\alpha(t):= t^{-\alpha} \qquad \mbox{or} \qquad h_\alpha(t):= \textrm{e}^{-\alpha t}
$$
we retrieve the Mellin transformed and the Laplace transformed path graph Laplacians, respectively, introduced by E. Estrada. See \cite{Estrada,RM12,EHHL,EHLP}, and \cite{EDHMMRS,CDT} for the most recent developments. We will call $\Delta_{\alpha}$ a \emph{path} graph Laplacian. It defines in a natural way a new complete graph $G_\alpha :=\left(X, w_\alpha, \boldsymbol{1}\right)$, where the edge-weight function can be expressed by
$$
w_\alpha (x_i,x_j) := \sum_{n=1}^{\delta_\infty}\kappa_{\alpha,n}(x_i,x_j).
$$
Another way to express the above formulation, is to consider the matrices $\tilde{A}(G)$ and $K_{\alpha,n}$ defined by
$$
\left(\tilde{A}(G)\right)_{i,j}:=\begin{cases}
1 & \mbox{if } w(x_i,x_j)\neq 0,\\
0 & \mbox{otherwise,}
\end{cases}\qquad
\left(K_{\alpha,n}\right)_{i,j}:= \kappa_{\alpha,n}(x_i,x_j).
$$
Then, the adjacency matrix associated to $G_\alpha$ can be written as
$$
A(G_\alpha)= \sum_{n=1}^{\delta_\infty} \left(\tilde{A}(G)\right)^n \odot K_{\alpha,n}, 
$$
where $\odot$ is the Hadamard product. We call $G_\alpha$ the \emph{path graph} of $G$.
	
\subsection{The fractional graph Laplacian}\label{ssec:fractional_graph_laplacian}
Since $\Delta$ is a selfadjoint operator, it is possible to define the \emph{fractional} graph Laplacian operator $\Delta^\alpha$, with $\alpha\in (0,1)$, such that 
$$
\Delta= U\Lambda U^T \qquad \mapsto \qquad			\Delta^\alpha := U\Lambda^\alpha U^T,
$$
where $(U,\Lambda)$ is the spectral decomposition of $\Delta$ and $U^T$ is the transpose of $U$. The scheme is the following: 
$$
(G,\Delta) \mapsto \Delta^\alpha \mapsto (G^\alpha,\Delta^\alpha).
$$
That is, we start from a weighted graph $G=(X,w,\boldsymbol{1})$ and its associated graph Laplacian $\Delta$. For any fixed $\alpha \in (0,1)$ we get a selfadjoint operator $\Delta^\alpha$ which is the graph Laplacian associated to a new graph $G^\alpha=(X,w^\alpha,\boldsymbol{1})$. For an account about fractional graph Laplacian on simple graphs and their applications we refer to \cite{RM14,Riascos2015,Michelitsch2017,DeNigris2017,Riascos2018,Michelitsch2019,Bautista2019,BBDS}. We also note in passing that this construction can be extended to cover the case of directed graphs by moving from the spectral decomposition of $\Delta$ to the Jordan canonical form, we refer to~\cite{BBDS} for the complete details. We avoid here this further degree of generality since the main point of the discussion in Section~\ref{sec:comp&embedding} can be extended transparently to the oriented case.

We have the following result that generalizes \cite[Proposition 3.3, Corollary 3.5]{BBDS}.
\begin{proposition}\label{prop:fractional_decay_rate}
For any given $\alpha \in (0,1)$ and a fixed graph $G=(X,w, \boldsymbol{1})$ along with its associated graph Laplacian $\Delta$, there exists a unique graph $G^\alpha=(X,w^\alpha, \boldsymbol{1})$ such that $\Delta^\alpha$ is the graph Laplacian associated to $G^\alpha$. Moreover, $G^\alpha$ is a complete graph and it holds that
\begin{equation*}
0<w^\alpha(x_i,x_j) \leq c\left(\frac{\rho(\Delta)}{2(\delta(x_i,x_j)-1)}\right)^\alpha \quad \forall \, x_i,x_j \mbox{ such that } \delta(x_i,x_j)>1 \mbox{ in } G,
\end{equation*}
where $c$ is a constant independent from the nodes, $\rho(\Delta)$ is the spectral radius of $\Delta$ and $\delta(\cdot,\cdot)$ is the combinatorial distance function on $G$.
\end{proposition}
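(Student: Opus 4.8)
The plan is to establish three things in turn: existence and uniqueness of $G^\alpha$, completeness of $G^\alpha$, and the decay estimate.

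\textbf{Existence and uniqueness.} Since $\Delta^\alpha$ is symmetric (as a real polynomial in the symmetric matrix $\Delta$, or via the spectral decomposition with $U$ orthogonal), I would first check that $\Delta^\alpha$ has the structural form of a graph Laplacian with the counting measure, namely that its off-diagonal entries are nonpositive and its row sums vanish. The row-sum condition is immediate: the constant vector $\boldsymbol{1}$ is in the kernel of $\Delta$ (by \ref{assumption:symmetry}, each row of $\Delta = D(G)-A(G)$ sums to zero), hence $\Delta^\alpha \boldsymbol{1} = 0$ as well because $\boldsymbol{1}$ is an eigenvector for eigenvalue $0$ and $0^\alpha = 0$. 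Granting the sign condition on off-diagonal entries (this is exactly the content of the cited \cite[Proposition 3.3]{BBDS}, which I would invoke), we then set $w^\alpha(x_i,x_j) := -(\Delta^\alpha)_{i,j}$ for $i\neq j$ and $w^\alpha(x_i,x_i):=0$; symmetry of $w^\alpha$ follows from symmetry of $\Delta^\alpha$, and then $D(G^\alpha)-A(G^\alpha)$ recovers $\Delta^\alpha$ since the diagonal of $\Delta^\alpha$ is forced to be the corresponding row sum of $w^\alpha$. Uniqueness is clear: the map $w \mapsto D-A$ from edge-weight functions (with no loops) to Laplacians with counting measure is injective, so $w^\alpha$ is the only candidate.

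\textbf{Completeness.} I must show $w^\alpha(x_i,x_j)>0$ whenever $i\neq j$, i.e.\ all strict inequalities in the displayed bound, not just for $\delta(x_i,x_j)>1$ but also for actual neighbors. The standard route, following \cite{BBDS}, is an integral representation for the fractional power: for $\alpha\in(0,1)$,
\begin{equation*}
\lambda^\alpha = \frac{\alpha}{\Gamma(1-\alpha)}\int_0^\infty \frac{1-e^{-s\lambda}}{s^{1+\alpha}}\,ds,
\end{equation*}
valid for $\lambda\ge 0$, which upon substituting $\lambda \to \Delta$ gives
\begin{equation*}
\Delta^\alpha = \frac{\alpha}{\Gamma(1-\alpha)}\int_0^\infty \frac{\operatorname{id}-e^{-s\Delta}}{s^{1+\alpha}}\,ds.
\end{equation*}
The off-diagonal entries of $e^{-s\Delta}$ are strictly positive for every $s>0$ by irreducibility (\ref{assumption:connected}): $e^{-s\Delta} = e^{-sD}e^{sA}\cdots$ is not quite it, but $-\Delta$ has nonnegative off-diagonal entries and is irreducible, so $e^{-s\Delta}$ is an entrywise-positive matrix by the Perron--Frobenius theory of $\mathrm{M}$-matrices (or directly: $e^{-s\Delta}=e^{-s\rho}e^{s(\rho\,\mathrm{id}-\Delta)}$ with $\rho\,\mathrm{id}-\Delta$ nonnegative and irreducible, hence $e^{s(\rho\,\mathrm{id}-\Delta)}$ strictly positive for $s>0$). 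Therefore $-(\Delta^\alpha)_{i,j} = \frac{\alpha}{\Gamma(1-\alpha)}\int_0^\infty s^{-1-\alpha}(e^{-s\Delta})_{i,j}\,ds > 0$ for $i\neq j$, giving completeness.

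\textbf{The decay estimate.} For the quantitative bound I would use that $(e^{-s\Delta})_{i,j}$, expanded as $e^{-s\rho}\sum_{k\ge0} \frac{s^k}{k!}\bigl((\rho\,\mathrm{id}-\Delta)^k\bigr)_{i,j}$ — or more cleanly $\sum_k \frac{s^k}{k!}(-\Delta)^k_{i,j}$ after handling the diagonal — vanishes for $k < \delta(x_i,x_j)$, since $(-\Delta)^k$ has the same zero pattern as the $k$-th power of the adjacency pattern for $k$ below the distance. Keeping only the leading term in the series and bounding $\|\Delta\|=\rho(\Delta)$ in the operator norm, one gets a bound of the shape $(e^{-s\Delta})_{i,j} \le e^{-s\cdot 0}\,\frac{(s\rho(\Delta))^{m}}{m!}\cdot(\text{const})$ with $m=\delta(x_i,x_j)$, wait — more carefully one writes $(e^{-s\Delta})_{ij}=\sum_{k\ge m}\frac{(-s)^k}{k!}(\Delta^k)_{ij}$ and bounds $|(\Delta^k)_{ij}|\le \rho(\Delta)^k$; then $\int_0^\infty s^{-1-\alpha}\sum_{k\ge m}\frac{s^k\rho^k}{k!}\,ds$ does not converge, so instead I would split the integral at $s_0 = 1/\rho(\Delta)$ or optimize over a cutoff, using the convergent tail bound $(e^{-s\Delta})_{ij}\le 1$ for large $s$ and the $m$-th order vanishing for small $s$. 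Optimizing the cutoff produces the factor $\bigl(\rho(\Delta)/(2(\delta(x_i,x_j)-1))\bigr)^\alpha$; the precise constants (the $2$ and the shift by $1$) come from a Stirling-type estimate $m!\ge (m/e)^m$ together with the choice of cutoff, exactly as in \cite[Corollary 3.5]{BBDS} which this proposition generalizes.

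\textbf{Main obstacle.} The delicate point is the decay estimate: getting the combinatorial distance to enter with the correct power $\alpha$ and the stated constant requires the right balance between the small-$s$ behavior (controlled by the vanishing order $m=\delta(x_i,x_j)$ of $(\Delta^k)_{ij}$) and the large-$s$ tail, and care is needed because the naive term-by-term bound diverges. The passage from the simple-graph case of \cite{BBDS} to weighted $G$ is essentially cosmetic here — all that is used is $\|\Delta\|=\rho(\Delta)$ and the zero-pattern of powers of $\Delta$, both of which hold verbatim for weighted graphs — so I expect the generalization itself to be routine and the only real work to be the cutoff optimization, which I would present by reference to the analogous computation in \cite{BBDS}.
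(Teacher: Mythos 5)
Your proposal establishes the right statement, but by a genuinely different mechanism than the paper at both of its nontrivial points. For the sign of the off-diagonal entries the paper never touches the heat semigroup: it writes $\Delta = s\cdot\operatorname{id}-B$ with $s=\max_i(\Delta)_{i,i}$ and $B\geq 0$, expands $\Delta^\alpha = s^\alpha\sum_{k\geq 0}(-1)^k\binom{\alpha}{k}(s^{-1}B)^k$, and uses that $(-1)^k\binom{\alpha}{k}<0$ for every $k\geq1$ and $\alpha\in(0,1)$ together with irreducibility of $B$. Your Balakrishnan-formula argument, $-(\Delta^\alpha)_{i,j}=\tfrac{\alpha}{\Gamma(1-\alpha)}\int_0^\infty s^{-1-\alpha}(e^{-s\Delta})_{i,j}\,ds>0$, is an equally valid and standard alternative; both ultimately reduce to Perron--Frobenius positivity of powers of a nonnegative irreducible matrix, so neither buys much over the other here. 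For the decay estimate the divergence is more substantial: the paper does not integrate the heat kernel at all, but uses the polynomial-approximation argument, $|(\Delta^\alpha)_{i,j}|\leq\min\{\|f-p\|_{\infty,[0,\rho]} : \deg p\leq\delta(x_i,x_j)-1\}$ with $f(\lambda)=\lambda^\alpha$, and then Jackson's theorem applied to the $\alpha$-H\"older modulus $\omega_f(h)=h^\alpha$ to get $(1+\pi^2/2)\,\omega_f\bigl(1/(2(\delta(x_i,x_j)-1))\bigr)$ after rescaling the spectrum to $[0,1]$. That is where the $2$ and the shift by $1$ actually come from --- not from a Stirling estimate, as you assert --- and it is also why the paper can dispatch the weighted case by citation: the Jackson route only uses the sparsity pattern of powers of $\Delta$ and $\|\Delta\|=\rho(\Delta)$, exactly as you observe.

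One caveat on your decay step so that it actually closes. The fixed split at $s_0=1/\rho(\Delta)$ that you first propose gives no decay in $\delta$ at all (the tail alone contributes $\rho^\alpha/\alpha$), and the naive small-$s$ bound $|(e^{-s\Delta})_{i,j}|\leq \tfrac{(s\rho)^{m}}{m!}e^{s\rho}$ carries a spurious exponential that spoils the optimization unless the cutoff is taken of the form $s_0=tm/\rho$ with $t$ small enough. The clean fix is to factor $e^{-s\Delta}=e^{-s\rho'}e^{s(\rho'\operatorname{id}-\Delta)}$ with $\rho'=\max_i(\Delta)_{i,i}$, so that $(e^{-s\Delta})_{i,j}\leq (s\rho')^{m}/m!$ with no exponential, and then the optimized cutoff yields $C_\alpha(\rho/m)^\alpha$ with $m=\delta(x_i,x_j)$. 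Since $(\rho/m)^\alpha\leq 2^\alpha\bigl(\rho/(2(m-1))\bigr)^\alpha$ for $m\geq2$ and the proposition only asserts some node-independent $c$, this does prove the stated inequality --- but with a different constant and by a different computation than the cited corollary, so the final appeal to ``exactly as in'' that reference should be replaced by the short optimization above.
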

\begin{proof}
Since $(\Delta)_{i,i}= \sum_{x_j\in X}w(x_i,x_j)=-\sum_{j\neq i}(\Delta)_{i,j}$ and $G$ is connected, then it is immediate to check that we can write
$$
\Delta = s\cdot\operatorname{id} - B, \qquad s:= \max_{i=1,\ldots,n} \left\{(\Delta)_{i,i}\right\}> 0
$$
and $B$ a nonnegative matrix. Therefore, by the boundedness of $s^{-1}B$, it holds that
$$
\Delta^\alpha = s^\alpha \left( \operatorname{id} - s^{-1}B\right)^\alpha = s^\alpha\sum_{k=0}^\infty (-1)^k\binom{\alpha}{k}(s^{-1}B)^k.
$$
Observe now that, as a consequence of the connectedness  of $G$, for every $i\neq j$ there exists $k\geq 1$ such that $(B^k)_{i,j}\neq 0$, and since $(-1)^k\binom{\alpha}{k}<0$ for every $\alpha\in (0,1),k\geq 1$, then we conclude that $(\Delta^\alpha)_{i,j}<0$ for every $i\neq j$. On the other hand, writing $u_{\boldsymbol{1}}$ for the unit-constant function, that is, $u_{\boldsymbol{1}}(x_i)=1$ for every $x_i \in X$, then again by standard arguments it holds that $\Delta^\alpha\left[u_{\boldsymbol{1}} \right]=0$.  Therefore, $(\Delta^\alpha)_{i,i}= -\sum_{j\neq i}(\Delta^\alpha)_{i,j}$ and then by definition $\Delta^\alpha$ determines uniquely a complete graph $G^\alpha=(X,w^\alpha, \boldsymbol{1})$, such that $w^\alpha(x_i,x_j)=-(\Delta^\alpha)_{i,j}\neq 0$ for every $i\neq j$. Clearly, the graph Laplacian associated to $G^\alpha$ coincides with $\Delta^\alpha$. The last part of the proof can be derived exactly by the same arguments in \cite[Proposition 3.3 and Corollary 3.5]{BBDS}. The crucial point is to observe that $f(s)=s^\alpha$ over $[0,1]$ is $\alpha$-H\"{o}lder and has modulus of continuity $\omega_f(s)=s^\alpha$. Then, by the Jackson Theorem it can be proved that
$$
\left|\left(\Delta^\alpha\right)_{i,j}\right|\leq  (1+\pi^2/2)\omega_f \left( \frac{1}{2(\delta(x_i,x_j)-1)}\right).
$$
\end{proof}
\noindent We call $G^\alpha$ the \emph{fractional graph} of $G$ .

\section{Compatibility and Embedding}\label{sec:comp&embedding}
This is the core of the paper. In Section \ref{sec:path&fractional_laplacians} we described how to build the path and the fractional graph $G_\alpha=(X,w_\alpha,\boldsymbol{1})$ and $G^\alpha=(X,w^\alpha,\boldsymbol{1})$, respectively, starting from a graph $G=(X,w,\boldsymbol{1})$. The main question is whether $G_\alpha, G^\alpha$ are {\textquotedblleft compatible\textquotedblright} with $G$. Basically, we added new edges to the graph $G$ allowing the random walker to make long-range jumps which were not originally permitted. If, for any reasons, in any time-interval the random walker is deprived of those long-range jumps, or we know that he moved only along the old edges of $G$, then if our model is compatible we should not be able to distinguish the dynamics of the walker on $G_\alpha,G^\alpha$ from its original dynamics on $G$. This is explained in Definition \ref{def:SE} and Proposition \ref{prop:conditioned_random_walk}. It is indeed desirable that $G_\alpha$ and $G^\alpha$ preserve the original dynamics of $G$, without breaking it, otherwise our new models would not be anymore expressions of the original model $G$.

Noteworthy, as it usually happens, there is an interplay between the probabilistic and the pure analytic point of views which materializes in Theorem \ref{thm:SE&Embedding_equivalence}, intertwining the definition of \emph{compatibility} in \ref{def:SE} with the definition of \emph{embedding} in \ref{def:embedding&dynamics}. This provides us with an effective computational way to understand if a graph $G'$ is compatible with another graph $G$.

As we will see in Subsection \ref{ssec:counterexamples}, unfortunately the parameter $\alpha$ breaks the original dynamics, in general. To make this rigorous, we introduce a couple of preliminary notations.

\begin{definition}[Pullback]
Let $\phi: X \to Y$ be a bijection. Given a function $u\in C(X)$, we call \emph{pullback} of $u$ the function $u^*:=u\circ\phi^{-1} \in C(Y)$. 
\end{definition}
{Let us observe that the above definition of pull-back extends naturally to measures. That is, given a measure $\nu$ on the measure space $(X,2^X)$, then its pull-back $\nu^*=\nu\circ \phi^{-1}$ is a measure on the measure space $(Y,2^Y)$.}
\begin{definition}[$\phi$-induced subgraph]\label{def:phi-subgraph}
Given two graphs $G=(X,w,\mu)$ and $G'=(X', w',\mu')$, let $\phi : X \to \phi(X)\subseteq X'$ be a bijection onto its image $\phi(X)$ that satisfies the property
\begin{equation}\label{hp:embedding_map1}
\tag{\textbf{E1}}\mbox{for every edge } \{x_i,x_j\} \mbox{ of }G,\; \{\phi(x_i),\phi(x_j)\} \mbox{ is an edge of } G'.
\end{equation}
We call \emph{$\phi$-induced subgraph} of $G'$ the subgraph $\phi(G):=\left(\phi(X),  w_\phi, \mu_\phi\right)\subset G'$ such that
\begin{itemize}
		\item $\phi(X)=\{ \phi(x_i) \, : \, x_i \in X  \}$;
		\item $w_\phi(x'_i,x'_j)= \begin{cases}
		w'(x'_i, x'_j) & \mbox{if } \{\phi^{-1}(x'_i),\phi^{-1}(x'_j)\}\mbox{ is an edge of }G,\\
		0& \mbox{otherwise};
		\end{cases}$
		\item $\mu_\phi = \mu'_{|(\phi(X), w_\phi)}$.
	\end{itemize}
\end{definition}
Let us point out that even if $X\subseteq X'$ and $\phi \equiv \operatorname{id}$, where $\operatorname{id}: X \to X\subseteq X'$ is the identity map, in general $G\neq \operatorname{id}(G)$, since in general $w \neq w_{\operatorname{id}}$ and then $G$ could not be a subgraph of $G'$. For a visual representation of a $\phi$-subgraph, look at Figure~\ref{fig:embedding}.
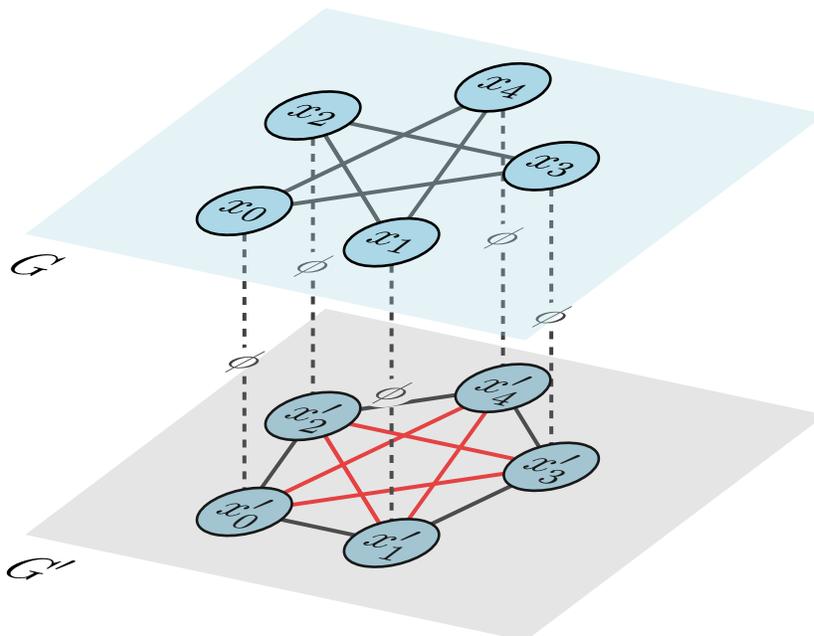
\begin{figure}
	\centering
	\begin{tikzpicture}[multilayer=3d,scale=2]
	\begin{Layer}[layer=1]
	\Plane[x=-.6,y=-.5,width=2.5,height=3.4,NoBorder]
	\node at (-.5,-.6)[below right]{\footnotesize $G$};
	\Vertex[x=0.5,size=.5,label=x_0,Math]{A}
	\Vertex[x=1.5,size=.5,label=x_1,Math]{B}
	\Vertex[x=0.19,y=0.95,size=.5,label=x_2,Math]{C}
	\Vertex[x=1.81,y=0.95,size=.5,label=x_3,Math]{D}
	\Vertex[x=1,y=1.54,size=.5,label=x_4,Math]{E}
	\Edge(A)(D) 
	\Edge(A)(E)
	\Edge(B)(C)
	\Edge(B)(E)
	\Edge(D)(C)
	\Edge(C)(B)
	\end{Layer}
	\begin{Layer}[layer=2]
	\node at (-.5,-.6)[below right]{\footnotesize $G'$};
	\Vertex[x=0.5,size=.5,label=x_0',Math]{A'}
	\Vertex[x=1.5,size=.5,label=x_1',Math]{B'}
	\Vertex[x=0.19,y=0.95,size=.5,label=x_2',Math]{C'}
	\Vertex[x=1.81,y=0.95,size=.5,label=x_3',Math]{D'}
	\Vertex[x=1,y=1.54,size=.5,label=x_4',Math]{E'}
	\Edge[color=red!80](A')(D')
	\Edge[color=red!80](A')(E')
	\Edge[color=red!80](B')(C')
	\Edge[color=red!80](B')(E')
	\Edge[color=red!80](D')(C')
	\Edge[color=red!80](C')(B')
	\Edge(A')(B')
	\Edge(B')(D')
	\Edge(D')(E')
	\Edge(E')(C')
	\Edge(C')(A')
	\Plane[x=-.6,y=-.5,width=2.5,height=3.4,NoBorder,color=gray,opacity=0.2]
	\end{Layer}
	\Edge[style=dashed,Math,label=\phi](A)(A')
	\Edge[style=dashed,Math,label=\phi](B)(B')
	\Edge[style=dashed,Math,label=\phi](C)(C')
	\Edge[style=dashed,Math,label=\phi](D)(D')
	\Edge[style=dashed,Math,label=\phi](E)(E')
	\end{tikzpicture}
	\caption{Projection of the graph $G=(X,w,\mu)$ into $G'=(X',w',\mu')$ by the bijection $\phi(x_i)=x_i'$. In red are depicted the common edges shared by $G'$ and $\phi(G)$.}
	\label{fig:embedding}
\end{figure}
\subsection{Compatibility}
In this subsection we give a rigorous probabilistic interpretation for the way of saying {\textquotedblleft to have the same dynamics\textquotedblright}.
\begin{definition}[Stochastic equivalence and compatibility]\label{def:SE}
	Given two graphs $G$ and $G'$, we say that they are \emph{stochastically equivalent} if there exists a bijection $\psi : I \to I'$ between their index sets such that
	\begin{equation}\label{eq:SE}
	\tag{\textbf{SE}} p_{i,j} = p'_{\psi(i),\psi(j)} \quad \mbox{for every }i,j\in I,
	\end{equation}
	where $p_{i,j}$ and $p'_{\psi(i),\psi(j)}$  are the (discrete) transition probabilities associated to $G$ and $G'$, respectively.
	
	Let now $G$ and $G'$ be such that $X\subseteq X'$ and $E\subseteq E'$, where $E$ is the edge-set of $G$ and $E'$ is the edge-set of $G'$, respectively. If $G$ and $\operatorname{id}(G)$ are stochastically equivalent, then we say that $G'$ is \emph{compatible} with $G$. 
\end{definition}
In other words, the above definition is telling us that, given any initial distribution $\nu$ and its pullback $\nu^*$, the random walk $(X_t)_{t\geq 0}$ on $G$ with initial distribution $\nu$ and the random walk $(X'_t)_{t\geq0}$ on $G'$ with initial distribution $\nu^*$ are \emph{stochastically equivalent in the wide sense} (see \cite[Definition 2 pg. 43]{Gihman2004}), where $t$ can take values in $\N$ or $\R_+$. Indeed, since $\hat{\Delta}= \operatorname{id} - P$ by equation \eqref{def:graph-Laplacian_matrix_form2}, both the probability distributions of the discrete-time and the continuous-time random walks are determined by $P$. This supports the  way of saying {\textquotedblleft to have the same dynamics\textquotedblright} when two graphs $G$ and $G'$ are stochastically equivalent.
\begin{remark*}
The definition of compatibility is well-posed, since if $E\subseteq E'$, then $\operatorname{id}(\cdot)$ satisfies \eqref{hp:embedding_map1} and it trivially defines an identity $\psi : I \to I$ on the index set $I$ of $X$, which is shared by $G$ and $\operatorname{id}(G)$. 
\end{remark*}
We provide now a characterization of random walks on a subgraph $G\subset G'$ as random walks on the supergraph $G'$ conditioned to move only along the edges of $G$. Given a graph $G'$, let us write
\begin{equation*}
\mathcal{E}':= \left\{ (x'_{i_k})_k \, | \, x'_{i_{k}}\sim x'_{i_{k+1}} \mbox{ in } G'  \right\}
\end{equation*} 
for the set of all possible walks on $G'$, with the convention that $(x'_i)$ belongs to $\mathcal{E}'$ for every $x'_i \in X'$. Clearly, $\mathcal{E}'$ is uniquely determined by the edge-weight function $w'$. On the other hand, if we fix a proper subgraph $G\subset G'$, then the edge-weight function $w$ of $G$ defines uniquely a new set $\mathcal{E}$ of all possible walks on $G$ such that $\mathcal{E}\subset \mathcal{E}'$, {with again the convention that $(x'_i) \in \mathcal{E}$ if $x'_i \in X$, where $X$ is the node set of $G$}. Given an initial distribution $\nu'$ on $(X',2^{X'})$, {such that $\nu'(X)>0$}, and its associated random walk $(X'_n)_{n\in \N}$  on $G'$, let us define the following set of finite dimensional distributions {on the state space $X$},
\begin{equation}\label{eq:finite_dim_distr_subgraph}
{\mu_n \left(\{x'_{i_0}\}\times \ldots \times \{x'_{i_n}\} \right):=\prod_{k=0}^n\mathbb{P}\left(X'_{k}=x'_{i_{k}}\,|\, X'_0=x'_{i_0}, \ldots, X'_{k-1}=x'_{i_{k-1}}, (X'_{j})_{j=0}^{k} \in \mathcal{E}  \right),}
\end{equation}
{where $x'_{i_0},\ldots,x'_{i_n} \in X$.} We have the following result, Proposition \ref{prop:conditioned_random_walk}, that highlights the relationship between a random walk on a subgraph $G\subset G'$ and a random walk on $G'$ conditioned to move only on the nodes which are neighbors in $G$. {Let us recall the notations introduced in Subsections \ref{ssec:subgraph} and \ref{ssec:random_walk}, that is,}
$$
{p'(x'_{i}, x'_j) = \frac{w'(x'_i,x'_j)}{\deg'(x'_i)}\; \forall \,x'_i,x'_j\in X', \qquad p(x_{i}, x_j) = \frac{w(x_i,x_j)}{\deg(x_i)}\; \forall\, x_i,x_j\in X\subseteq X',}
$$
{where $w', \deg'$ are the edge-weight and the degree functions of $G'$, respectively, $w$ is the edge-weight function associated to the subgraph $G$ and $\deg(x_i)=\deg'_{|(X,w)}(x_i)$.} 
\begin{proposition}\label{prop:conditioned_random_walk}
{Let $G=(X, w, \mu)$ be a subgraph of $G'=(X', w',\mu')$, and let $(X'_n)_{n\geq0}$ be the Markov chain on $X'$ with initial distribution $\nu'$, $\nu'(X)>0$}. There exists a stochastic process $(\tilde{X}_n)_{n\in\N}$ such that its finite dimensional distributions are given by \eqref{eq:finite_dim_distr_subgraph}. $(\tilde{X}_n)_{n\in\N}$ is stochastically equivalent in the wide sense to the random walk $\left(X_n\right)_{n\in \N}$ defined on $G$ and with initial distribution $\nu$ given by
$$
{\nu(\{x_i\})=\frac{\nu'(\{x_i\})}{\nu'(X)} \quad \mbox{for every } x_i \in X\subseteq X'.}
$$
{In particular, $\mathbb{P}\left(\tilde{X}_{n+1}=x'_{i_{n+1}}\, | \, \tilde{X}_{0}=x'_{i_{0}},\ldots,\tilde{X}_{n}=x'_{i_{n}}\right)$ is equal to} 
    $$
    {\mathbb{P}\left(X'_{n+1}=x'_{i_{n+1}}\, | \, X'_{0}=x'_{i_{0}},\ldots,X'_{n}=x'_{i_{n}}, (X'_k)_{k=0}^{n+1}\in \mathcal{E}\right).}
    $$
\end{proposition}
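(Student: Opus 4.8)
The plan is to compute the finite-dimensional distributions $\mu_n$ of \eqref{eq:finite_dim_distr_subgraph} explicitly, one factor at a time, show that they coincide with the finite-dimensional distributions of the random walk $(X_n)_{n\in\N}$ on $G$ started at the initial distribution $\nu$ of the statement, and then read off all three assertions at once: the process $(\tilde X_n)_{n\in\N}$ exists because the family $\{\mu_n\}$ is then consistent (concretely, $(X_n)_{n\in\N}$ itself, regarded as $X'$-valued, is such a process); ``stochastically equivalent in the wide sense'' means precisely ``same finite-dimensional distributions'' by \cite[Definition~2, p.~43]{Gihman2004}; and the ``in particular'' identity is nothing but the quotient $\mu_{n+1}/\mu_n$ of two consecutive distributions.

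For the $k=0$ factor, the convention $(x'_i)\in\mathcal{E}\iff x'_i\in X$ gives $\mathbb{P}(X'_0=x'_{i_0}\mid X'_0\in X)=\nu'(\{x'_{i_0}\})/\nu'(X)=\nu(\{x'_{i_0}\})$, legitimate thanks to $\nu'(X)>0$. For a factor with $k\geq1$, assume for the moment that $x'_{i_0},\ldots,x'_{i_{k-1}}$ is a walk in $G$ and that $\nu'(\{x'_{i_0}\})>0$, so the conditioning event has positive probability. Once the first $k$ coordinates are pinned down, the event $(X'_j)_{j=0}^{k}\in\mathcal{E}$ reduces to the single requirement $X'_k\in N_G(x'_{i_{k-1}})$, where $N_G(x)$ is the set of neighbours of $x$ in $G$ (the earlier links being already satisfied); the Markov property of $(X'_n)_{n\geq0}$ then lets one drop $X'_0,\ldots,X'_{k-2}$ from the conditioning, giving
\begin{equation*}
\mathbb{P}\bigl(X'_k=x'_{i_k}\mid X'_0=x'_{i_0},\ldots,X'_{k-1}=x'_{i_{k-1}},(X'_j)_{j=0}^{k}\in\mathcal{E}\bigr)=\mathbb{P}\bigl(X'_k=x'_{i_k}\mid X'_{k-1}=x'_{i_{k-1}},\,X'_k\in N_G(x'_{i_{k-1}})\bigr).
\end{equation*}
Expanding the right-hand side by the definition of conditional probability and cancelling the common factor $\deg'(x'_{i_{k-1}})$ turns it into $w'(x'_{i_{k-1}},x'_{i_k})\big/\sum_{x'_j\in N_G(x'_{i_{k-1}})}w'(x'_{i_{k-1}},x'_j)$. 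Now invoke $G\subset G'$: on every edge of $G$ one has $w=w'$, while $w(x'_{i_{k-1}},\cdot)$ vanishes outside $N_G(x'_{i_{k-1}})$, so the denominator equals the degree $\deg(x'_{i_{k-1}})$ in $G$ and the whole factor equals $w(x'_{i_{k-1}},x'_{i_k})/\deg(x'_{i_{k-1}})=p(x'_{i_{k-1}},x'_{i_k})$ when $x'_{i_k}\in N_G(x'_{i_{k-1}})$, and equals $0=p(x'_{i_{k-1}},x'_{i_k})$ otherwise.

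It then remains to dispose of degenerate tuples: if $\nu'(\{x'_{i_0}\})=0$ the $k=0$ factor already vanishes, and if $x'_{i_0},\ldots,x'_{i_n}$ fails to be a walk in $G$ the first factor at an offending step vanishes (its conditioning event is still positive-probability, the preceding nodes forming a $G$-walk), so with the convention $0\cdot(\text{undefined})=0$ the product in \eqref{eq:finite_dim_distr_subgraph} is $0$, matching $\nu(\{x'_{i_0}\})\prod_{k=1}^{n}p(x'_{i_{k-1}},x'_{i_k})$ because a $p$-factor is then $0$ as well. Assembling the factors,
\begin{equation*}
\mu_n\bigl(\{x'_{i_0}\}\times\cdots\times\{x'_{i_n}\}\bigr)=\nu(\{x'_{i_0}\})\prod_{k=1}^{n}p(x'_{i_{k-1}},x'_{i_k}),\qquad x'_{i_0},\ldots,x'_{i_n}\in X,
\end{equation*}
which is exactly the $(n+1)$-dimensional distribution of the random walk on $G$ with initial law $\nu$. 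This yields at once the existence of $(\tilde X_n)_{n\in\N}$ and its stochastic equivalence in the wide sense to $(X_n)_{n\in\N}$; and since, by \eqref{eq:finite_dim_distr_subgraph}, $\mu_{n+1}$ and $\mu_n$ are the same product truncated at $n+1$ and at $n$, the one-step conditional law of $\tilde X$ is the quotient $\mu_{n+1}(\{x'_{i_0}\}\times\cdots\times\{x'_{i_{n+1}}\})/\mu_n(\{x'_{i_0}\}\times\cdots\times\{x'_{i_n}\})$, i.e.\ the $k=n+1$ factor $\mathbb{P}(X'_{n+1}=x'_{i_{n+1}}\mid X'_0=x'_{i_0},\ldots,X'_n=x'_{i_n},(X'_k)_{k=0}^{n+1}\in\mathcal{E})$, which is the asserted identity.

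The only step that requires real care is the one inside the $k\geq1$ factor: recognising that, after fixing the first $k$ coordinates, conditioning on the whole path lying in $\mathcal{E}$ collapses to the one-step event $\{X'_k\in N_G(x'_{i_{k-1}})\}$, and then combining the Markov property with the subgraph identity $w=w'$ on edges of $G$ to recover precisely the $G$-transition probabilities. Everything else — the $k=0$ factor, the treatment of degenerate tuples, and the three conclusions — is bookkeeping.
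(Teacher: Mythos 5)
Your proof is correct and follows essentially the same route as the paper's: you collapse the conditioning event $(X'_j)_{j=0}^{k}\in\mathcal{E}$ to the one-step neighbourhood event, apply the Markov property, and use $w=w'$ on the edges of $G$ to identify each factor of \eqref{eq:finite_dim_distr_subgraph} with the subgraph transition probability, arriving at the same product formula $\nu(\{x'_{i_0}\})\prod_{k=1}^{n}p_{i_{k-1},i_k}$ and reading off all three claims from it. The only (harmless) deviation is at the existence step, where you realize the process directly as the random walk on $G$ instead of invoking the Kolmogorov extension theorem as the paper does, and your explicit treatment of the degenerate tuples (null initial mass or non-walks in $G$, where some factors of \eqref{eq:finite_dim_distr_subgraph} condition on null events) is a point the paper leaves implicit.
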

\begin{proof}
{Let $(X_n')_{n\geq 0}$ be the discrete-time Markov chain on the state space $X'$ with initial distribution $\nu'$ on $2^{X'}$. For notation brevity, let us define 
\begin{align*}
&A_{x'_{i_k}}:= \left\{\omega \in \Omega \,: \, X'_k(\omega)=x'_{i_k} \right\},  \\
&A_{x'_{i_0},\ldots,x'_{i_k}}:= \left\{\omega \in \Omega \,: \, X'_0(\omega)=x'_{i_0}, \ldots,X'_k(\omega)=x'_{i_k} \right\}= \bigcap_{j=0}^k A_{x'_{i_j}},\\
&B_k:= \left\{\omega \in \Omega \,: \, (X'_{j}(\omega))_{j=0}^{k} \in \mathcal{E} \right\},\\
&C_{x'_{i_k}}:=\left\{\omega \in \Omega \, : \, (x'_{i_k}, X'_{k+1}(\omega)) \in \mathcal{E} \right\}= \bigsqcup_{\substack{x'_j\in X:\\(x'_{i_k},x'_j) \in \mathcal{E}}}\left\{\omega \in \Omega \, : \,  X'_{k+1}(\omega)= x'_j \right\},\\
&t(x'_{i},x'_{j}):=\begin{cases}
0 & \mbox{if } x'_{i},x'_{j}\notin X,\\
\mathds{1}_{(0,\infty)}\left(w(x'_{i},x'_{j})) \right) & \mbox{otherwise}.
\end{cases}
\end{align*}
With this notation, we recall that by \ref{MC1}-\ref{MC2} it follows that
\begin{align*}
&\mathbb{P}\left(A_{x'_{i_{k+1}}}\, |\, A_{x'_{i_0},\ldots,x'_{i_{k}}} \right)=\mathbb{P}\left(A_{x'_{i_{k+1}}}\, |\, A_{x'_{i_{k}}} \right)=p'_{i_{k},i_{k+1}},\\
&\mathbb{P}\left(A_{x'_{i_0},\ldots,x'_{i_k}}\right)= \nu'(x'_{i_0})\prod_{j=1}^{k}p'_{i_{j-1},i_{j}},
\end{align*}
see for example \cite[Theorems 1.1.1 and 1.1.3]{Norris1997}. As preliminary remarks, observe that
$$
A_{x'_{i_0},\ldots,x'_{i_{n+1}}}\cap B_{n+1} = \begin{cases}
A_{x'_{i_0},\ldots,x'_{i_{n+1}}} & \mbox{if } (x'_{i_k})_{k=0}^{n+1}\in \mathcal{E}\\
\emptyset & \mbox{otherwise},
\end{cases}
$$
and that
$$
A_{x'_{i_0},\ldots,x'_{i_{n}}}\cap B_{n+1} = \begin{cases}
A_{x'_{i_0},\ldots,x'_{i_{n}}}\cap C_{x'_{i_n}} & \mbox{if } (x'_{i_k})_{k=0}^{n}\in \mathcal{E}\\
\emptyset & \mbox{otherwise}.
\end{cases}
$$
Let now observe that
\begin{align*}
\mathbb{P}\left(A_{x'_{i_0},\ldots,x'_{i_{n}}}, C_{x'_{i_{n}}} \right)&=\mathbb{P}\left(A_{x'_{i_0}},\ldots, A_{x'_{i_{n}}}, C_{x'_{i_{n}}} \right)\\
&=\mathbb{P}\left(A_{x'_{i_0}}\right)\mathbb{P}\left(A_{x'_{i_1}}\, |\, A_{x'_{i_0}}\right)\cdots \mathbb{P}\left(A_{x'_{i_n}}\, |\, A_{x'_{i_0},\ldots,x'_{i_{n-1}}}\right)\mathbb{P}\left(C_{x'_{i_{n}}}\, |\, A_{x'_{i_0},\ldots,x'_{i_{n}}}\right)\\
&=\nu'(x'_{i_0})\left(\prod_{k=1}^{n} p'_{i_{k-1},i_{k}}\right)\mathbb{P}\left(C_{x'_{i_{n}}}\, |\, A_{x'_{i_0},\ldots,x'_{i_{n}}} \right).
\end{align*}
Clearly, if $x'_{i_n}\notin X$, then $C_{x'_{i_{n}}}=\emptyset$ and $\mathbb{P}\left(A_{x'_{i_0},\ldots,x'_{i_{n}}}, C_{x'_{i_{n}}} \right)=0$. Instead, if $x'_{i_n}\in X$, then
\begingroup
\allowdisplaybreaks
\begin{align*}
\mathbb{P}\left(C_{x'_{i_{n}}}\, |\, A_{x'_{i_0},\ldots,x'_{i_{n}}} \right)&=\sum_{\substack{x'_j\in X:\\(x'_{i_n},x'_j)\in\mathcal{E}}}\mathbb{P}\left(X'_{n+1}=x'_j\, |\, A_{x'_{i_0},\ldots,x'_{i_{n}}} \right)\\
&=\sum_{\substack{x'_j\in X:\\(x'_{i_n},x'_j)\in\mathcal{E}}}p'_{i_n,j}\\
&= \sum_{\substack{x'_j\in X:\\(x'_{i_n},x'_j)\in\mathcal{E}}} \frac{w'(x'_{i_n},x'_j)}{\deg'(x'_{i_n})}\\
&= \frac{\deg'_{|(X,w)}(x'_{i_n})}{\deg'(x'_{i_n})}>0,
\end{align*}
where the last inequality is due to the fact that both $G$ and $G'$ are connected (see \ref{assumption:connected}). Therefore, assuming that $x'_{i_0}, \ldots, x'_{i_n}\in X$, from the above equation it holds that
\begin{align}
\mathbb{P}\left( X'_{n+1}=x'_{i_{n+1}} \, | \, A_{x'_{i_0},\ldots,x'_{i_{n}}}, B_{n+1} \right)&= \frac{\mathbb{P}\left(X'_{n+1}=x'_{i_{n+1}},A_{x'_{i_0},\ldots,x'_{i_{n}}}, B_{n+1} \right)}{\mathbb{P}\left(A_{x'_{i_0},\ldots,x'_{i_{n}}}, B_{n+1} \right)} \nonumber\\
&= \frac{\mathbb{P}\left(A_{x'_{i_0},\ldots,x'_{i_{n+1}}}, B_{n+1} \right)}{\mathbb{P}\left(A_{x'_{i_0},\ldots,x'_{i_{n}}}, B_{n+1} \right)}\nonumber\\
&=\frac{\mathbb{P}\left(A_{x'_{i_0},\ldots,x'_{i_{n+1}}} \right)t(x'_{i_n},x'_{i_{n+1}})}{\mathbb{P}\left(A_{x'_{i_0},\ldots,x'_{i_{n}}}, C_{x'_{i_{n}}} \right)}\nonumber\\
&= \frac{\nu'(x'_{i_0})\prod_{k=1}^{n+1} p'_{i_{k-1},i_{k}}t(x'_{i_n},x'_{i_{n+1}})}{\nu'(x'_{i_0})\prod_{k=1}^{n} p'_{i_{k-1},i_{k}}\mathbb{P}\left(C_{x'_{i_{n}}}\, |\, A_{x'_{i_0},\ldots,x'_{i_{n}}} \right)}\nonumber\\
&=\frac{ p'_{i_{n},i_{n+1}}t(x'_{i_n},x'_{i_{n+1}})}{\frac{\deg'_{|(X,w)}(x'_{i_n})}{\deg'(x'_{i_n})}}\nonumber\\
&= \frac{\frac{w'(x'_{i_n},x'_{i_{n+1}})}{\deg'(x'_{i_n})} t(x'_{i_n},x'_{i_{n+1}})}{\frac{\deg'_{|(X,w)}(x'_{i_n})}{\deg'(x'_{i_n})}}\nonumber\\
&= \frac{w'(x'_{i_n},x'_{i_{n+1}})}{\deg'_{|(X,w)}(x'_{i_n})}t(x'_{i_n},x'_{i_{n+1}})\nonumber\\
&= \begin{cases}
\frac{w(x'_{i_n},x'_{i_{n+1}})}{\deg'_{|(X,w)}(x'_{i_n})} &\mbox{if } x'_{i_{n+1}} \in X,\\
0 & \mbox{otherwise},
\end{cases}\nonumber\\
&= \begin{cases}
p_{i_n,i_{n+1}} &\mbox{if } x'_{i_{n+1}} \in X,\\
0 & \mbox{otherwise}. \label{eq:stoc0}
\end{cases}
\end{align}
\endgroup
Notice that $p_{i_n,i_{n+1}}$ are the transition probabilities associated to a random walk on the subgraph $G$. Moreover, let us observe now that, for every $x'_{i_0} \in X$,
\begin{align*}
\mu_0 \left(\{x'_{i_0}\}\right)&=\mathbb{P}\left(X'_0=x'_{i_0}\, | \, (X'_0)\in \mathcal{E}\right)\nonumber\\
&=\mathbb{P}\left(X'_0=x'_{i_0}\, | \, X'_0\in X\right)\nonumber\\
&=
\frac{\nu'(\{x'_{i_0}\})}{\sum_{x'_k\in X}\nu'(\{x'_{k}\})}\nonumber\\
&= \nu(\{x'_{i_0}\}).%
\end{align*}
We can then rewrite \eqref{eq:finite_dim_distr_subgraph} in the following way,
\begin{equation}\label{eq:stoc1}
\mu_n \left(\{x'_{i_0}\}\times \ldots \times \{x'_{i_n}\} \right)=
\nu \left(\{x'_{i_0}\}\right)\prod_{k=1}^n p_{i_{k-1},i_k}.
\end{equation}

Since, from \eqref{eq:stoc0},
\begin{align*}
\mathbb{P}\left( X'_{n+1}\in X \, | \, A_{x'_{i_0},\ldots,x'_{i_{n}}}, B_{n+1} \right)&= \sum_{x'_{i_{n+1}}\in X}\mathbb{P}\left( X'_{n+1}=x'_{i_{n+1}} \, | \, A_{x'_{i_0},\ldots,x'_{i_{n}}}, B_{n+1} \right)\\
&= \sum_{x'_{i_{n+1}}\in X}p_{i_n,i_{n+1}}=1,
\end{align*}
then, it is immediate to check that $\mu_{n+1} \left(\{x'_{i_0}\}\times \ldots \times \{x'_{i_n}\}\times X\right)= \mu_n \left(\{x'_{i_0}\}\times \ldots \times \{x'_{i_n}\}\right)$, and since $X$ is a standard Borel space, then by the Kolmogorov extension theorem it is possible to construct a probability measure $\tilde{\mathbb{P}}$ on the sequence space $\left(X^\N, \left(2^{X}\right)^\N\right)$ such that the coordinate maps $\tilde{X}_n(\omega)=\omega_n$ have the desired distribution (see \cite{Durrett2010}).  Finally, observing that, for every $x'_{i_0},\ldots,x'_{i_n}\in X$,
$$
\tilde{\mathbb{P}}\left(\tilde{X}_0 = x'_{i_0}\right)= \nu \left(\{x'_{i_0}\}\right),
$$
and that
$$
\tilde{\mathbb{P}}\left(\tilde{X}_{0} = x'_{i_{0}},\ldots, \tilde{X}_{n} = x'_{i_{n}}\right)= \mu_n \left(\{x'_{i_0}\}\times \cdots\times \{x'_{i_{n}}\}\right)=\nu \left(\{x'_{i_0}\}\right)p_{i_0,i_1}\cdots p_{i_{n-1},i_n},
$$
by \eqref{eq:stoc1}, then by \cite[Theorem 1.1.1]{Norris1997} the stochastic process $(\tilde{X}_n)_{n\in \mathbb{N}}$ is Markov with initial distribution~$\nu$.}
\end{proof}
We call $(\tilde{X}_n)_{n\in \N}$ a \emph{$G$-conditioned random walk}. As a final remark, when $G$ and $G'$ share the same node-set $X$, then $\operatorname{id}(\cdot)$ defines the subgraph $\operatorname{id}(G)\subseteq G'$, but $G$ and $\operatorname{id}(G)$ are not necessarily isomorphic. In particular, $G$ is not necessarily a subgraph of $G'$ {since, following the definition given in Subsection \ref{ssec:subgraph} and Definition \ref{def:phi-subgraph}, $w(x_i,x_j)$ can differ from $w_{\operatorname{id}}(x'_i,x'_j)$, where $x'_i=\operatorname{id}(x_i), x'_j=\operatorname{id}(x_j)$}. On the other hand, $G$ and $\operatorname{id}(G)$ have the same edge-set, and therefore, with abuse of notation, we will write $G$-conditioned random walks on $G'$ {instead of $\operatorname{id(G)}$-conditioned random walks on $G'$}, those random walks on $G'$ that move on the edges of $\operatorname{id}(G)$.

The following corollary is an immediate consequence of Proposition~\ref{prop:conditioned_random_walk} and Definition~\ref{def:SE}
\begin{corollary}\label{cor:restricted_random_walks}
Let $G$ and $G'$ be graphs on the same node-set $X$, and let $E\subset E'$, where $E$ and $E'$ are the edge-set of $G$ and $G'$, respectively. Then $G'$ is compatible with $G$ if and only if every $G$-conditioned random walk $(\tilde{X}_n)_{n\in \N}$ on $G'$ is stochastically equivalent in the wide sense to a random walk on $G$. 
\end{corollary}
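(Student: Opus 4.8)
The plan is to deduce both implications directly from Proposition~\ref{prop:conditioned_random_walk} and Definition~\ref{def:SE}, once we record the elementary dictionary that makes the two notions talk to each other. Recall that, since $E\subseteq E'$, the map $\operatorname{id}:X\to X$ satisfies \eqref{hp:embedding_map1}, so the $\operatorname{id}$-induced subgraph $\operatorname{id}(G)\subset G'$ is well defined and, by the remark preceding the statement, a $G$-conditioned random walk on $G'$ is precisely an $\operatorname{id}(G)$-conditioned random walk on $G'$. I will also use, without further comment, that ``stochastically equivalent in the wide sense'' means equality of all finite-dimensional distributions --- hence is a symmetric and transitive relation --- and that for a random walk on a graph these distributions are $\nu(\{x_{i_0}\})\,p_{i_0,i_1}\cdots p_{i_{n-1},i_n}$; in particular two random walks on graphs sharing the index set $I$ and started from the same initial distribution $\nu$ are stochastically equivalent in the wide sense if and only if their transition matrices agree, and if moreover $\nu$ is strictly positive on $X$ this is forced just by matching two-dimensional distributions.

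For the ``only if'' direction I would argue as follows. Fix any initial distribution $\nu'$ on $(X',2^{X'})$ with $\nu'(X)>0$ and let $(\tilde X_n)_{n\in\N}$ be the corresponding $G$-conditioned random walk on $G'$. Applying Proposition~\ref{prop:conditioned_random_walk} to the subgraph $\operatorname{id}(G)\subset G'$, $(\tilde X_n)$ is stochastically equivalent in the wide sense to the random walk on $\operatorname{id}(G)$ with initial distribution $\nu(\{x_i\})=\nu'(\{x_i\})/\nu'(X)$. If $G'$ is compatible with $G$, then by Definition~\ref{def:SE} the transition matrices of $G$ and $\operatorname{id}(G)$ coincide (with $\psi=\operatorname{id}$), so the random walk on $\operatorname{id}(G)$ with initial distribution $\nu$ has exactly the same finite-dimensional distributions as the random walk on $G$ with initial distribution $\nu$; transitivity then gives that $(\tilde X_n)$ is stochastically equivalent in the wide sense to a random walk on $G$, and $\nu'$ was arbitrary.

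For the converse, the idea is to feed Proposition~\ref{prop:conditioned_random_walk} an initial distribution with full support. Take $\nu'$ to be, say, uniform on $X$, so that the renormalized measure $\nu$ satisfies $\nu(\{x_i\})>0$ for every $x_i\in X$, and let $(\tilde X_n)$ be the associated $G$-conditioned random walk on $G'$. By Proposition~\ref{prop:conditioned_random_walk} it is stochastically equivalent in the wide sense to the random walk $(Y_n)$ on $\operatorname{id}(G)$ with initial distribution $\nu$, while by hypothesis it is also stochastically equivalent in the wide sense to some random walk $(Z_n)$ on $G$. Since the one-dimensional distribution of $(\tilde X_n)$ is $\nu$, the walk $(Z_n)$ must start from $\nu$ as well; hence $(Y_n)$ and $(Z_n)$ are random walks on the graphs $\operatorname{id}(G)$ and $G$ --- which share the index set $I$ --- started from the same strictly positive $\nu$ and stochastically equivalent in the wide sense. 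Matching two-dimensional distributions yields $\nu(\{x_i\})\,p^{\operatorname{id}(G)}_{i,j}=\nu(\{x_i\})\,p_{i,j}$ for all $i,j$, hence $p_{i,j}=p^{\operatorname{id}(G)}_{i,j}$ for all $i,j$, which is exactly \eqref{eq:SE} with $\psi=\operatorname{id}$; by Definition~\ref{def:SE}, $G'$ is compatible with $G$.

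The statement is essentially a repackaging of Proposition~\ref{prop:conditioned_random_walk}, so I do not expect a genuine obstacle; the only thing to be careful about is the bookkeeping of the two initial distributions --- renormalizing $\nu'|_X$ to $\nu$, checking that the comparison random walk on $G$ is forced to start from the one-dimensional marginal $\nu$, and, in the converse, choosing $\nu'$ so that $\nu$ has full support, so that equality of two-dimensional distributions upgrades to the pointwise identity of transition probabilities required by \eqref{eq:SE}.
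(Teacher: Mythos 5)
Your proposal is correct and follows essentially the same route as the paper's proof: both directions reduce the statement to Proposition~\ref{prop:conditioned_random_walk} together with the observation that equality of all finite-dimensional distributions is equivalent to equality of the transition probabilities of $G$ and $\operatorname{id}(G)$. Your version is slightly more careful than the paper's (explicitly renormalizing $\nu'|_X$ to $\nu$ and choosing a full-support initial distribution so that matching two-dimensional marginals forces $p_{i,j}=p^{\operatorname{id}(G)}_{i,j}$), but the underlying argument is the same.
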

\begin{proof}
If $G'$ is compatible with $G$, then $G$ and $\operatorname{id}(G)$ are stochastically equivalent, and therefore their transition probabilities are identical. Therefore, by definition, any $G$-conditioned random walk $(\tilde{X}_n)_{n\in \N}$ on $G'$ with initial distribution $\nu$ is stochastically equivalent in the wide sense to the random walk on $G$ with the same initial distribution $\nu$, because their finite dimensional distributions are identical. Vice-versa, if every $G$-conditioned random walk $(\tilde{X}_n)_{n\in \N}$ on $G'$ is stochastically equivalent in the wide sense to a random walk on $G$, then their finite dimensional distributions are identical. This means that the transition probabilities of $G$ and $\operatorname{id}(G)$ are identical too, and we conclude that $G$ and $\operatorname{id}(G)$ are stochastically equivalent{, that is, $G'$ is compatible with $G$}.
\end{proof}

\subsection{Embedding}\label{sec:embedding}
As we mentioned, there is another way to look at the compatibility between graphs in Definition \ref{def:SE}, that is purely analytic. To make it clear, let us fix a discrete-time random walk $(X_n)_{n\geq 0}$ on $G$ with initial distribution $\nu$, and let us recall that $\hat{\Delta}= \operatorname{id} - P$. Thanks to the Chapman-Kolmogorov equations (see \cite[Theorem 1.1.3]{Norris1997}), then writing $\hat{\Delta}^T$ and $P^T$ for the transpose of $\hat{\Delta}$ and $P$, respectively, by basic manipulation we have that $u(n,\cdot):= P^T(n)[\nu]$ is the unique solution of the following diffusion equation
\begin{equation*}
\begin{cases}
u(n+1,x_i) - u(n,x_i) + \hat{\Delta}^T[u](n,x_i) =0, & (n,x_i) \in \N\times X,\\
u(0,x_i)= \nu.
\end{cases}
\end{equation*}  
The solution $u$ describes the initial probability distribution $\nu$ evolving in time, that is
\begin{equation*}
 u(n,x_i) = \left(P^T(n)[\nu]\right)_i = \mathbb{P}\left(X_n = i \, | \, X_0\sim \nu \right).
\end{equation*}
The same can be said for the continuous-time random walk, replacing the above discrete-time diffusion equation with
\begin{equation*}
\begin{cases}
\partial_tu(t,x_i) + \hat{\Delta}[u](t,x_i) =0, & (t,x_i) \in \R_+\times X,\\
u(0,x_i)= \nu,
\end{cases}
\end{equation*}
with solution $u(t,\cdot)= \textrm{e}^{-t\hat{\Delta}}[\nu(\cdot)]$. The dynamics of a random walk is encoded into the graph Laplacian $\hat{\Delta}$. Then it will not be a big surprise to discover that the stochastic equivalence is the other side of a purely analytic/geometric property. This brings us to the following definition.
\begin{definition}[Embedding]\label{def:embedding&dynamics}
Given two graphs $G=(X,w,\mu)$ and $G'=(X', w',\mu')$, let $\phi : X \to \phi(X)\subseteq X'$ be a bijection onto its image $\phi(X)$ such that satisfies \eqref{hp:embedding_map1}. We say that $G$ can be \emph{embedded} into $G'$ through $\phi$, and we write $G \hookrightarrow_\phi G'$, if 
\begin{subequations}
\begin{equation}\label{hp:embedding_map3}
\tag{\textbf{E2}}\Delta[u] = \Delta_{\phi(G)}[u^*],
\end{equation}
\end{subequations}
where $\Delta_{\phi(G)}$ is the graph Laplacian associated to the $\phi$-induced subgraph $\phi(G)$, defined in \ref{def:phi-subgraph}.
\end{definition}
Let us observe that condition \eqref{hp:embedding_map1} can be seen as a kind of topological embedding, while asking the validity of both \eqref{hp:embedding_map1} and \eqref{hp:embedding_map3} is a kind of metric embedding. Indeed, in some sense the above definition imitates the isometric embedding between Riemannian manifolds, see \cite{ONeill}. Before proceeding further, let us make the following remarks:
\begin{itemize}
	\item when $\mu'=\deg'$ then $\mu_\phi= \deg'_{|(\phi(X),w_\phi)}=:\deg_{\phi}$;
	\item given $G=(X,w,\boldsymbol{1})$ and the normalized path or fractional graph $\hat{G}_\alpha=(X,w_\alpha,\deg), \hat{G}^\alpha=(X,w^\alpha,\deg)$, then the identity map $\operatorname{id} : X \to X$ clearly satisfies \eqref{hp:embedding_map1} and $u^*=u$. In general, $\hat{G}$ is not a subgraph of $\hat{G}_\alpha$ or $\hat{G}^\alpha$, but we are entitled to ask whether $\hat{G}\hookrightarrow_{{\operatorname{id}}} \hat{G}_\alpha$ or $\hat{G}\hookrightarrow_{{\operatorname{id}}} \hat{G}^\alpha$.
\end{itemize}
The next result provides an effective computational way to determine whether there can be an embedding.
\begin{theorem}\label{thm:embedding}
	Let $\hat{G}=(X,w,\deg)$ and $\hat{G}'=(X',w',\deg')$ be normalized graphs such that there exists a bijection $\phi : X \to \phi(X)\subseteq X'$. Then $\hat{G}\hookrightarrow_\phi \hat{G}'$ if and only if 
\begin{equation}\label{thm:embedding_eq1.2}
		\frac{w(x_i,x_j)}{\deg(x_i)} = \frac{w_\phi(\phi(x_i),\phi(x_j))}{\deg_{\phi}(\phi(x_i))} \quad \mbox{for all } x_i,x_j \in X.
\end{equation}
	In particular, condition \eqref{thm:embedding_eq1.2} is equivalent to
	\begin{equation}\tag{4.5'}\label{thm:embedding_eq1.2'}
	\frac{w(x_i,x_j)}{w(x_i,x_k)} = \frac{w'(\phi(x_i),\phi(x_j))}{w'(\phi(x_i),\phi(x_k))} \qquad \forall\, x_i,x_j,x_k\in X \mbox{ such that } x_j\sim x_i, x_k\sim x_i \, \mbox{in }\hat{G}.
	\end{equation}
\end{theorem}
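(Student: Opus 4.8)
The plan is to reduce the operator identity \eqref{hp:embedding_map3} to an identity between matrix entries, and then to manipulate the resulting ratios. First I would write out both Laplacians explicitly. By \eqref{def:graph-Laplacian_matrix_form2}, for $u\in C(X)$ one has $\hat{\Delta}[u](x_i)=u(x_i)-\deg(x_i)^{-1}\sum_{x_j\in X}w(x_i,x_j)u(x_j)$. For the $\phi$-induced subgraph $\phi(G)=(\phi(X),w_\phi,\deg_\phi)$ (recall that $\mu'=\deg'$ forces $\mu_\phi=\deg_\phi$), evaluating the associated Laplacian at the node $\phi(x_i)$ and using that $\phi$ is a bijection onto $\phi(X)$ together with $u^*(\phi(x_j))=u(x_j)$ gives $\hat{\Delta}_{\phi(G)}[u^*](\phi(x_i))=u(x_i)-\deg_\phi(\phi(x_i))^{-1}\sum_{x_j\in X}w_\phi(\phi(x_i),\phi(x_j))u(x_j)$. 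Reading \eqref{hp:embedding_map3} as an identity transported along $\phi$ by pullback and cancelling the common diagonal term $u(x_i)$, it becomes $\sum_{x_j\in X}\bigl(\deg(x_i)^{-1}w(x_i,x_j)-\deg_\phi(\phi(x_i))^{-1}w_\phi(\phi(x_i),\phi(x_j))\bigr)u(x_j)=0$ for every $u\in C(X)$; testing against $u=\mathds{1}_{\{x_j\}}$ for each $x_j$ yields exactly \eqref{thm:embedding_eq1.2}, and conversely \eqref{thm:embedding_eq1.2} plainly reconstitutes \eqref{hp:embedding_map3}. This settles the first equivalence.

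For the equivalence of \eqref{thm:embedding_eq1.2} with \eqref{thm:embedding_eq1.2'}, the bookkeeping fact to keep in mind is that, by Definition \ref{def:phi-subgraph}, $w_\phi(\phi(x_i),\phi(x_j))$ equals $w'(\phi(x_i),\phi(x_j))$ whenever $x_j\sim x_i$ in $\hat{G}$ and vanishes otherwise, so that $\deg_\phi(\phi(x_i))=\sum_{x_k\sim x_i}w'(\phi(x_i),\phi(x_k))$. Assuming \eqref{thm:embedding_eq1.2}, for $x_j\sim x_i$ and $x_k\sim x_i$ the two ratios $w(x_i,x_j)/\deg(x_i)$ and $w(x_i,x_k)/\deg(x_i)$ match the corresponding primed ratios, and dividing one by the other — legitimate since $w(x_i,x_k)>0$ and, by \eqref{hp:embedding_map1}, $w'(\phi(x_i),\phi(x_k))>0$ — produces \eqref{thm:embedding_eq1.2'}.

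Conversely, assume \eqref{thm:embedding_eq1.2'}. If $x_j\nsim x_i$, both sides of \eqref{thm:embedding_eq1.2} vanish by the definition of the $\phi$-induced subgraph, so suppose $x_j\sim x_i$. Rewriting \eqref{thm:embedding_eq1.2'} as $w(x_i,x_j)\,w'(\phi(x_i),\phi(x_k))=w(x_i,x_k)\,w'(\phi(x_i),\phi(x_j))$ for every $x_k\sim x_i$ and summing over all such $x_k$ — a nonempty set by connectedness \ref{assumption:connected} — the right-hand side collapses to $\deg(x_i)\,w'(\phi(x_i),\phi(x_j))$ and the left-hand side to $w(x_i,x_j)\,\deg_\phi(\phi(x_i))$; dividing by $\deg(x_i)\deg_\phi(\phi(x_i))>0$ and using $w_\phi(\phi(x_i),\phi(x_j))=w'(\phi(x_i),\phi(x_j))$ recovers \eqref{thm:embedding_eq1.2}.

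The argument is essentially bookkeeping, and the only step that requires a small idea is this last one, where summing the pairwise relations over the $\hat{G}$-neighborhood of $x_i$ converts the ``projective'' identity \eqref{thm:embedding_eq1.2'} back into the ``normalized'' identity \eqref{thm:embedding_eq1.2}. The points to be careful about are keeping $w_\phi$ and $w'$ distinct (they agree only along the edges of $G$), computing $\deg_\phi$ with respect to $w_\phi$ rather than $w'$, and interpreting \eqref{hp:embedding_map3} as an equality of functions transported along $\phi$ via the pullback.
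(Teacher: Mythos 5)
Your proof is correct and follows essentially the same route as the paper: the first equivalence is obtained exactly as in the paper's proof, by linearity and testing the operator identity \eqref{hp:embedding_map3} against the indicator functions $\mathds{1}_{\{x_j\}}$, which immediately yields \eqref{thm:embedding_eq1.2}. The only (minor) divergence is in the equivalence of \eqref{thm:embedding_eq1.2} with \eqref{thm:embedding_eq1.2'}: the paper normalizes by a reference neighbor $x_{j_1}$ and solves the resulting small system of equations in the ratios $r_k, r'_k$, whereas you divide the two normalized identities directly for one implication and, for the converse, cross-multiply and sum over the $\hat{G}$-neighborhood of $x_i$ to recover $\deg(x_i)$ and $\deg_\phi(\phi(x_i))$ --- a variant that has the merit of making the direction \eqref{thm:embedding_eq1.2'}$\Rightarrow$\eqref{thm:embedding_eq1.2} fully explicit, which the paper leaves somewhat implicit (as it also does the closing observation that \eqref{thm:embedding_eq1.2} forces \eqref{hp:embedding_map1}).
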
	
\begin{proof} 
Because of the linearity of $\hat{\Delta}$, it is enough to prove \eqref{hp:embedding_map3} for every $u \in C(X)$ of the form $u = \mathds{1}_{\{x_i\}}$. First, let us assume the validity of \eqref{hp:embedding_map1}, and without affecting the generality of the proof and for the sake of notation simplicity, we will suppose hereafter that $X\subseteq X'$ and $\phi=\operatorname{id}$, with $\operatorname{id}:X\to X$ the identity map. Fix $x_i \in X$ and let $\{x_{j_k}\}_{k\geq 1}$ be all the nodes in $X$ such that $x_{j_k}\sim x_i$, or equivalently such that $\{x_i,x_{j_k}\}$ is an edge of $\hat{G}$. If $u=\mathds{1}_{\{x_i\}}$, then
	$$
	\hat{\Delta}\left[\mathds{1}_{\{x_i\}}\right](x_i)= 1= \hat{\Delta}_{\operatorname{id}(G)}\left[\mathds{1}_{\{x_i\}}\right](x_i).
	$$
If instead $u= \mathds{1}_{\{x_{j_1}\}}$, then 
	$$
	\hat{\Delta}\left[\mathds{1}_{\{x_{j_1}\}}\right](x_i) = - \frac{w(x_i,x_{j_1})}{\deg(x_i)} = - \frac{w(x_i,x_{j_1})}{\sum_{k\geq 1}w(x_i,x_{j_k})}
	$$
	and 
	$$
	\hat{\Delta}_{\operatorname{id}(G)}\left[\mathds{1}_{\{x_{j_1}\}}\right](x_i)=  - \frac{w_{{\operatorname{id}}}(x_i,x_{j_1})}{\deg_{{\operatorname{id}}}(x_i)}= - \frac{w'(x_i,x_{j_1})}{\sum_{k\geq 1}w'(x_i,x_{j_k})}.
	$$
Imposing that $\hat{\Delta}\left[\mathds{1}_{\{x_{j_1}\}}\right](x_i)=\hat{\Delta}_{\operatorname{id}(G)}\left[\mathds{1}_{\{x_{j_1}\}}\right](x_i)$ gives
\begin{equation*}%
\frac{w(x_i,x_{j_1})}{\deg(x_i)}=	\frac{w(x_i,x_{j_1})}{\sum_{k\geq1}w(x_i,x_{j_k})}=\frac{w'(x_i,x_{j_1})}{\sum_{k\geq1}w'(x_i,x_{j_k})}= \frac{w_{{\operatorname{id}}}(\operatorname{id}(x_i),\operatorname{id}(x_{j_1}))}{\deg_{{\operatorname{id}}}(\operatorname{id}(x_i))},
\end{equation*}
	and letting $x_i,\mathds{1}_{\{x_{j_1}\}}$ vary on all over the nodes in $X$ we get \eqref{thm:embedding_eq1.2}. Let us observe that if there exists only one node $x_{j_1}$ such that $x_i\sim x_{j_1}$, then both equations \eqref{thm:embedding_eq1.2} and \eqref{thm:embedding_eq1.2'} are satisfied. Supposing instead that $\#\{x_{j_k}\, : \, x_{j_k}\sim x_i\}=m\geq 2$, if we write 
	$$
	r_{k}:= \frac{w(x_i,x_{j_k})}{w(x_i,x_{j_1})} , \quad r'_{k}:= \frac{w'(x_i,x_{j_k})}{w'(x_i,x_{j_1})} \quad k\geq 2,
	$$
	then from \eqref{thm:embedding_eq1.2} we get the system of equations
	\begin{equation*}
	\begin{cases}
	\frac{1}{1+\sum_{k\geq 2}r_k} = \frac{1}{1+\sum_{k\geq 2}r'_k},\\
	\frac{r_2}{1+\sum_{k\geq 2}r_k} = \frac{r'_2}{1+\sum_{k\geq 2}r'_k},\\
	\vdots\\
	\frac{r_m}{1+\sum_{k\geq 2}r_k} = \frac{r'_m}{1+\sum_{k\geq 2}r'_k}.
	\end{cases}
	\end{equation*}
	From the first equation we have that
	$$
	\sum_{k\geq2}r_k=\sum_{k\geq2}r'_k,
	$$
	which plugged into the remaining equations provides
	$$
	r_k=r'_k \quad \mbox{for } k\geq 2.
	$$
	From 
	$$
	\frac{r_k}{r_s} = \frac{r'_k}{r'_s} \quad \mbox{for any } k,s\geq 2,
	$$
	we get \eqref{thm:embedding_eq1.2'}. Finally, let us observe that if \eqref{thm:embedding_eq1.2} holds, then $x_i\sim x_j$ if and only if $\phi(x_i)\sim \phi(x_j)$ and \eqref{hp:embedding_map1} is satisfied.
\end{proof}
We have eventually come to the point where we can prove the equivalence between compatibility and embedding.
\begin{theorem}\label{thm:SE&Embedding_equivalence}
	Given two graphs $G$ and $G'$, then they are stochastically equivalent if and only if there exists a bijection $\phi: X \to X'$ such that $\hat{G}\hookrightarrow_\phi \hat{G}'$ and $\hat{G}'\hookrightarrow_{\phi^{-1}} \hat{G}$. In particular, $G^\alpha$ and $G_\alpha$ are compatible with $G$ if and only if $\hat{G}\hookrightarrow_{\operatorname{id}} \hat{G}^\alpha$ and $\hat{G}\hookrightarrow_{\operatorname{id}} \hat{G}_\alpha$, respectively.
\end{theorem}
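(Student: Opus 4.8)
The plan is to reduce both implications to the pointwise criterion \eqref{thm:embedding_eq1.2} of Theorem~\ref{thm:embedding} together with the formula $p_{i,j}=w(x_i,x_j)/\deg(x_i)$ for the transition probabilities, which depends only on the edge-weight function and not on the measure $\mu$ (so $G$ and $\hat G$, and likewise any unnormalized subgraph and its normalization, carry the same $P$). The bookkeeping observation I expect to do all the work is this: if a bijection $\phi:X\to X'$ is a \emph{bijection of edge-sets}, meaning $x_i\sim x_j$ in $G$ iff $\phi(x_i)\sim\phi(x_j)$ in $G'$, then the $\phi$-induced subgraph $\phi(G)\subset G'$ of Definition~\ref{def:phi-subgraph} satisfies $w_\phi(\phi(x_i),\phi(x_j))=w'(\phi(x_i),\phi(x_j))$ for \emph{every} pair $i,j$ (both sides vanish exactly when $x_i\nsim x_j$), and hence $\deg_\phi(\phi(x_i))=\sum_{x_k\in X}w'(\phi(x_i),\phi(x_k))=\sum_{x'_k\in X'}w'(\phi(x_i),x'_k)=\deg'(\phi(x_i))$. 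Under this hypothesis condition \eqref{thm:embedding_eq1.2} for $\hat G\hookrightarrow_\phi\hat G'$ reads verbatim $p_{i,j}=p'_{\psi(i),\psi(j)}$, with $\psi$ the index-set bijection induced by $\phi$.

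For the ``if'' direction I would start from $\phi$ with $\hat G\hookrightarrow_\phi\hat G'$ and $\hat G'\hookrightarrow_{\phi^{-1}}\hat G$: property \eqref{hp:embedding_map1} for $\phi$ gives $x_i\sim x_j\Rightarrow\phi(x_i)\sim\phi(x_j)$, while \eqref{hp:embedding_map1} for $\phi^{-1}$ applied to the edge $\{\phi(x_i),\phi(x_j)\}$ gives the converse, so $\phi$ is a bijection of edge-sets and the observation turns \eqref{thm:embedding_eq1.2} into $p_{i,j}=p'_{\psi(i),\psi(j)}$, i.e.\ stochastic equivalence. For the ``only if'' direction I would set $\phi(x_i):=x'_{\psi(i)}$ from a bijection $\psi$ witnessing stochastic equivalence; since $\deg,\deg'>0$ we have $p_{i,j}>0\iff x_i\sim x_j$ and $p'_{\psi(i),\psi(j)}>0\iff\phi(x_i)\sim\phi(x_j)$, so the equality $p_{i,j}=p'_{\psi(i),\psi(j)}$ already forces $\phi$ to be a bijection of edge-sets; then the observation identifies $w_\phi,\deg_\phi$ with the transported $w',\deg'$, making \eqref{thm:embedding_eq1.2} a restatement of $p_{i,j}=p'_{\psi(i),\psi(j)}$, whence $\hat G\hookrightarrow_\phi\hat G'$ by Theorem~\ref{thm:embedding}; running the symmetric computation (now with $w_{\phi^{-1}}=w$, $\deg_{\phi^{-1}}=\deg$, since the edge-sets correspond) yields $\hat G'\hookrightarrow_{\phi^{-1}}\hat G$.

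For the particular case I would argue directly rather than quote the first part. Both $G^\alpha$ and $G_\alpha$ are complete graphs on the node-set $X$ of $G$, so $E\subseteq E^\alpha$ (resp.\ $E\subseteq E_\alpha$) and $\operatorname{id}$ satisfies \eqref{hp:embedding_map1}; by Definition~\ref{def:SE}, ``$G^\alpha$ compatible with $G$'' means that $G$ and the $\operatorname{id}$-induced subgraph $\operatorname{id}(G)\subset G^\alpha$ have the same transition probabilities. Since $\operatorname{id}(G)$ shares its edge-weight function with the $\operatorname{id}$-induced subgraph of $\hat G^\alpha$, this equality is exactly condition \eqref{thm:embedding_eq1.2} for $\hat G\hookrightarrow_{\operatorname{id}}\hat G^\alpha$, and Theorem~\ref{thm:embedding} closes the loop; the same words work for $G_\alpha$. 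I would flag that the reverse embedding $\hat G^\alpha\hookrightarrow_{\operatorname{id}}\hat G$ is \emph{not} asserted here and fails whenever $G$ is not complete, which is precisely why compatibility is a one-directional notion and this case does not literally follow from the two-sided first part.

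I expect the main obstacle to be the bookkeeping observation and, within it, pinning down \emph{why both embeddings are needed}: a single embedding $\hat G\hookrightarrow_\phi\hat G'$ only guarantees that edges of $G$ go to edges of $G'$, so $G'$ may carry extra edges at $\phi(x_i)$, and then $\deg_\phi(\phi(x_i))<\deg'(\phi(x_i))$, making \eqref{thm:embedding_eq1.2} strictly weaker than $p_{i,j}=p'_{\psi(i),\psi(j)}$; it is the second embedding $\hat G'\hookrightarrow_{\phi^{-1}}\hat G$ (probabilistically, the requirement that the walker be confined to the old edges of $G$) that removes those extra edges and synchronizes the two denominators. Beyond that, everything reduces to substitution into Theorem~\ref{thm:embedding} and unwinding Definition~\ref{def:phi-subgraph}.
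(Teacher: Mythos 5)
Your proof is correct and, for the main equivalence, follows essentially the same route as the paper: both arguments reduce everything to the pointwise criterion \eqref{thm:embedding_eq1.2} of Theorem~\ref{thm:embedding}, observe that $p_{i,j}=p'_{\psi(i),\psi(j)}$ together with the strict positivity of the degrees forces $x_i\sim x_j \Leftrightarrow \phi(x_i)\sim\phi(x_j)$, and then identify $w_\phi$ with $w'$ and $\deg_\phi$ with $\deg'$ so that \eqref{thm:embedding_eq1.2} becomes verbatim the condition \eqref{eq:SE}. Where you genuinely depart from the paper is the ``in particular'' clause: the paper dispatches it in one sentence as ``just a particular case, since $\operatorname{id}$ is a bijection between the node-sets,'' which read literally would require the two-sided conclusion $\hat{G}\hookrightarrow_{\operatorname{id}}\hat{G}^\alpha$ \emph{and} $\hat{G}^\alpha\hookrightarrow_{\operatorname{id}^{-1}}\hat{G}$ — the latter being false whenever $G$ is not complete, since it would amount to stochastic equivalence of $G$ with $G^\alpha$ rather than with $\operatorname{id}(G)$. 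You instead unwind Definition~\ref{def:SE} (compatibility is stochastic equivalence of $G$ with the $\operatorname{id}$-induced subgraph, whose edge-set equals that of $G$) and apply Theorem~\ref{thm:embedding} directly, which is the correct reading and makes explicit why only the single embedding $\hat{G}\hookrightarrow_{\operatorname{id}}\hat{G}^\alpha$ appears in the statement. Your closing remark on why \emph{both} embeddings are needed in the symmetric first part (a lone embedding leaves $\deg_\phi\le\deg'$ and hence a strictly weaker condition than \eqref{eq:SE}) is likewise accurate and is only implicit in the paper.
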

\begin{proof}
	Suppose that $G$ and $G'$ are stochastically equivalent. Clearly, the bijection $\psi: I \to I'$ between the index sets translates into a bijection $\phi$ between the node-sets $X, X'$. With abuse of notation, we set $\psi=\phi$. From \eqref{eq:SE} and the definition of transition probabilities, we have that
	\begin{equation}\label{eq:2}
	\frac{w(x_i,x_j)}{\deg(x_i)}=p_{i,j} = p'_{\phi(i),\phi(j)} = \frac{w'(\phi(x_i),\phi(x_j))}{\deg'(\phi(x_i))}.
	\end{equation}
	Therefore, $w(x_i,x_j)>0$ if and only if $w'(\phi(x_i),\phi(x_j))>0$, that is, $x_i\sim x_j$ in $G$ if and only if $\phi(x_i)\sim \phi(x_j)$ in $G'$. This means that $w'(\phi(x_i),\phi(x_j))=w_\phi(\phi(x_i),\phi(x_j))$ and $\deg'(\phi(x_i))=\deg_{\phi}(\phi(x_i))$, as in Definition \ref{def:phi-subgraph}, and then $\hat{G}\hookrightarrow_\phi \hat{G}'$ by \eqref{thm:embedding_eq1.2} of Theorem \ref{thm:embedding}. Trivially, it holds the converse as well, namely, $\hat{G}'\hookrightarrow_{\phi^{-1}} \hat{G}$. For the other way around, if it holds that $\hat{G}$ and $\hat{G}'$ can be embedded into each other by a map $\phi$ and its inverse $\phi^{-1}$, then it is immediate to check that $\phi$ induces a bijection $\psi$ between the node-sets $X,X'$, and that $x_i\sim x_j$ if and only if $\phi(x_i)\sim \phi(x_j)$. Then, again by equation \eqref{thm:embedding_eq1.2}, equation \eqref{eq:2} and the definition of transition probabilities, we get \eqref{eq:SE}. The last part of the theorem is just a particular case, since the map $\operatorname{id}(\cdot)$ is trivially a bijection between the node-sets of $G$ and $G^\alpha, G_\alpha$.  
\end{proof}

\subsection{Examples and Counterexamples}\label{ssec:counterexamples}
Here we present some examples where $G^\alpha, G_\alpha$ are compatible with $G$, and vice-versa, counterexamples where they are not. 
\begin{lemma}\label{lem:embedding_path-graph}
Given a graph $G=(X,w,\boldsymbol{1})$, then the path graph $G_\alpha$, generated by the function $\kappa_{\alpha,n}$ defined in \eqref{eq:path-graph}, is compatible with $G$ if and only if 
\begin{equation}\label{eq:001}
\frac{\kappa_{\alpha,1}(x_i,x_j)}{\kappa_{\alpha,1}(x_i,x_k)}=1 \qquad \forall\, x_i,x_j,x_k \in X \mbox{ such that } x_j\sim x_i, x_k \sim x_i \mbox{ in } G.
\end{equation}
In particular, if $G$ is unweighted then the path graphs generated by the Mellin and Laplace transform functions are compatible with $G$ for every $\alpha >0$.
\end{lemma}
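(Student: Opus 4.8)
The plan is to chain together the two structural results already available. By the last assertion of Theorem~\ref{thm:SE&Embedding_equivalence}, $G_\alpha$ is compatible with $G$ if and only if $\hat G\hookrightarrow_{\operatorname{id}}\hat G_\alpha$; this is legitimate because $G_\alpha=(X,w_\alpha,\boldsymbol{1})$ is complete, hence it shares the node set of $G$, contains all of its edges, and \eqref{hp:embedding_map1} for $\phi=\operatorname{id}$ is automatic, so the hypotheses of Definition~\ref{def:SE} are met. Thus I would work entirely with the embedding $\hat G\hookrightarrow_{\operatorname{id}}\hat G_\alpha$ and the computable test~\eqref{thm:embedding_eq1.2'} of Theorem~\ref{thm:embedding}.

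The one substantive observation is that only the $n=1$ block of the path Laplacian is felt along the original edges. If $x_j\sim x_i$ in $G$ then $\delta(x_i,x_j)=1$, so by~\eqref{eq:path-graph} $\kappa_{\alpha,n}(x_i,x_j)=0$ for every $n\ge2$, whence
\begin{equation*}
w_\alpha(x_i,x_j)=\sum_{n=1}^{\delta_\infty}\kappa_{\alpha,n}(x_i,x_j)=\kappa_{\alpha,1}(x_i,x_j).
\end{equation*}
Since a pair of nodes is an edge of $\hat G$ exactly when it is an edge of $G$, the test~\eqref{thm:embedding_eq1.2'} applied to $\hat G$, $\hat G_\alpha$ and $\phi=\operatorname{id}$ reads
\begin{equation*}
\frac{w(x_i,x_j)}{w(x_i,x_k)}=\frac{w_\alpha(x_i,x_j)}{w_\alpha(x_i,x_k)}=\frac{\kappa_{\alpha,1}(x_i,x_j)}{\kappa_{\alpha,1}(x_i,x_k)}\qquad\text{for all }x_j\sim x_i,\ x_k\sim x_i\text{ in }G,
\end{equation*}
and because $G$ is unweighted the leftmost quotient equals $1$, so the condition is exactly~\eqref{eq:001}. (As in the proof of Theorem~\ref{thm:embedding}, a vertex with a single neighbour imposes nothing, so the quantifier there is vacuous.)

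For the ``in particular'' part I would just verify~\eqref{eq:001} directly for the two classical kernels. With $G$ unweighted and $d_w=\delta$, every edge $\{x_i,x_j\}$ of $G$ satisfies $d_w(x_i,x_j)=\delta(x_i,x_j)=1$, so $\kappa_{\alpha,1}(x_i,x_j)=h_\alpha(1)$ takes the same positive value on every edge of $G$ — namely $1$ for the Mellin transform function $h_\alpha(t)=t^{-\alpha}$ and $\mathrm{e}^{-\alpha}$ for the Laplace transform function $h_\alpha(t)=\mathrm{e}^{-\alpha t}$. Hence the ratio in~\eqref{eq:001} is $1$ for every admissible triple and every $\alpha>0$, and the previous paragraph yields compatibility.

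The argument is a short chain of reductions, so there is no genuine obstacle; the only care needed is the bookkeeping in the middle step — matching edges of $\hat G$ with edges of $G$, observing that $w_\alpha$ agrees with $\kappa_{\alpha,1}$ precisely on those edges because the higher-distance blocks vanish there, and not mistaking the single-neighbour vertex for an exception.
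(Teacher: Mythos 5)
Your route is exactly the paper's: the printed proof is a one-line appeal to Theorem~\ref{thm:SE&Embedding_equivalence}, condition \eqref{thm:embedding_eq1.2'} and the definition of $\kappa_{\alpha,n}$, and your write-up fills in precisely those steps. The reduction of compatibility to $\hat G\hookrightarrow_{\operatorname{id}}\hat G_\alpha$, the observation that $G_\alpha$ is complete so the edge set of $G$ is contained in that of $G_\alpha$, and the identity $w_\alpha(x_i,x_j)=\kappa_{\alpha,1}(x_i,x_j)$ on the edges of $G$ (only the block $n=\delta(x_i,x_j)$ survives in the sum) are all correct and constitute the whole content of the paper's argument.

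The one step I cannot accept as written is ``because $G$ is unweighted the leftmost quotient equals $1$''. The first assertion of the lemma is stated for a general $G=(X,w,\boldsymbol{1})$, which in this paper may be weighted; unweightedness enters only in the ``in particular'' clause. What your (correct) application of \eqref{thm:embedding_eq1.2'} actually yields is that $G_\alpha$ is compatible with $G$ if and only if
\[
\frac{w(x_i,x_j)}{w(x_i,x_k)}=\frac{\kappa_{\alpha,1}(x_i,x_j)}{\kappa_{\alpha,1}(x_i,x_k)}
\qquad\forall\,x_i,x_j,x_k\ \text{such that}\ x_j\sim x_i,\ x_k\sim x_i,
\]
and this coincides with \eqref{eq:001} only when the left-hand ratios are all equal to $1$, i.e.\ when $w$ is constant on the edges incident to each vertex. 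For a genuinely weighted $G$ one can choose $h_\alpha$ and $d_w$ so that $\kappa_{\alpha,1}$ is proportional to $w$ on the edges of $G$; then the displayed condition holds (so $G_\alpha$ is compatible) while \eqref{eq:001} fails, so the two conditions are not interchangeable. Accordingly, either prove the displayed condition and note that \eqref{eq:001} is its specialization to the unweighted case --- which is in fact how the paper itself applies the lemma in Counterexample~\ref{cexample:dpathlaplacian}, where the test used is $\tfrac{w(x_0,x_1)}{w(x_0,x_3)}\neq\tfrac{\kappa_{\alpha,1}(x_0,x_1)}{\kappa_{\alpha,1}(x_0,x_3)}$ --- or restrict the first assertion to unweighted $G$; you cannot silently import that hypothesis mid-proof. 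The verification of the ``in particular'' clause for the Mellin and Laplace kernels with $d_w=\delta$ is fine, as is the remark about vertices with a single neighbour.
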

\begin{proof}
It is immediate from Theorem~\ref{thm:SE&Embedding_equivalence} and equation \eqref{thm:embedding_eq1.2'}, and the definition of $\kappa_{\alpha,n}$ in \eqref{eq:path-graph}.
\end{proof}

\begin{lemma}\label{lem:cycle}
Let $G=(X,w,\boldsymbol{1})$ be a cycle of dimension $n$, that is, $X=\{x_i \, : \, i=0,\ldots,n-1\}$ and
$$
w(x_i,x_j)=\begin{cases}
1 & \mbox{if } (i - j)\equiv \pm 1 \mod{n},\\
0 & \mbox{otherwise},
\end{cases}
$$
Then $G^\alpha$ is compatible with $G$ for every $\alpha \in (0,1]$.
\end{lemma}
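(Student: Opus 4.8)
The plan is to reduce the claim to the computational embedding criterion of Theorem~\ref{thm:embedding} and then exploit the circulant structure of the cycle's graph Laplacian. By Theorem~\ref{thm:SE&Embedding_equivalence}, $G^\alpha$ is compatible with $G$ if and only if $\hat{G}\hookrightarrow_{\operatorname{id}} \hat{G}^\alpha$, and by Theorem~\ref{thm:embedding} the latter holds if and only if condition \eqref{thm:embedding_eq1.2'} is satisfied. In a cycle every node $x_i$ has precisely two neighbors, $x_{i-1}$ and $x_{i+1}$ (indices mod $n$), with $w(x_i,x_{i-1}) = w(x_i,x_{i+1}) = 1$, so the left-hand side of \eqref{thm:embedding_eq1.2'} is always equal to $1$. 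Hence compatibility reduces to checking
$$
w^\alpha(x_i,x_{i-1}) = w^\alpha(x_i,x_{i+1}) \qquad \text{for all } i,
$$
equivalently, recalling from Proposition~\ref{prop:fractional_decay_rate} that $w^\alpha(x_i,x_j)=-(\Delta^\alpha)_{i,j}$, that $(\Delta^\alpha)_{i,i-1} = (\Delta^\alpha)_{i,i+1}$ for all $i$.

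Next I would note that the Laplacian of the cycle is $\Delta = 2\operatorname{id} - A$, where $A$ is the adjacency matrix with ones in the $\pm 1$ diagonals (taken mod $n$); thus $\Delta$ is a symmetric circulant matrix. Since all $n\times n$ circulant matrices are simultaneously diagonalized by the Fourier matrix, any function of $\Delta$ defined through the spectral calculus is again circulant, and applying a real function to a symmetric matrix keeps it symmetric; therefore $\Delta^\alpha$ is a symmetric circulant matrix. Writing $(\Delta^\alpha)_{i,j} = c_{(i-j)\bmod n}$, symmetry forces $c_m = c_{n-m}$, and in particular $(\Delta^\alpha)_{i,i+1} = c_1 = c_{n-1} = (\Delta^\alpha)_{i,i-1}$, which is exactly the identity needed. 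Alternatively one can make this fully explicit: the eigenvalues of $\Delta$ are $\lambda_k = 4\sin^2(\pi k/n)$ for $k=0,\dots,n-1$ with the standard Fourier eigenvectors, so
$$
(\Delta^\alpha)_{i,j} = \frac{1}{n}\sum_{k=0}^{n-1}\lambda_k^\alpha \cos\left(\frac{2\pi k (i-j)}{n}\right),
$$
which depends on $i-j$ only through an even function, hence is invariant under $i-j\mapsto -(i-j)$.

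Finally, the boundary case $\alpha = 1$ is immediate since $G^1 = G$ and every graph is compatible with itself, so one may assume $\alpha\in(0,1)$ for the spectral argument above. The only point that requires a little care is the assertion that the spectral-calculus fractional power preserves both the circulant and the symmetric structure of $\Delta$; everything else is routine bookkeeping with Theorem~\ref{thm:embedding}. I expect this structural observation to be the crux of the argument, although it follows directly from the simultaneous diagonalizability of circulant matrices by the Fourier matrix.
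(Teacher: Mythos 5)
Your proposal is correct and follows essentially the same route as the paper: both reduce compatibility to condition \eqref{thm:embedding_eq1.2'} via Theorems \ref{thm:embedding} and \ref{thm:SE&Embedding_equivalence}, and both exploit the circulant symmetric structure of $\Delta$ (diagonalized by the Fourier matrix, with eigenvalues $4\sin^2(\pi k/n)$) to conclude that $(\Delta^\alpha)_{i,j}$ depends on $i-j$ only through an even function, so the two neighbor weights at each node coincide. The paper simply writes out the explicit cosine sum that you mention as your ``alternative'' fully explicit version.
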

\begin{proof}
In virtue of Theorem \ref{thm:SE&Embedding_equivalence}, we will prove that $\hat{G} \hookrightarrow_{\operatorname{id}} \hat{G}^\alpha$. It is clear that the degree function is uniform, that is, $\deg(x_i)\equiv 2$ for every $x_i \in X$. Namely, $G$ is a finite regular graph and by the matrix notation \eqref{def:graph-Laplacian_matrix_form2}, it holds then that
\begin{equation*}
\Delta = 2\cdot\operatorname{id} - A(G).
\end{equation*} 
Therefore, $\Delta^\alpha = U\left(2\cdot\operatorname{id}-\Lambda\right)^\alpha U^T$, where $U\Lambda U^T$ is the spectral decomposition of the adjacency matrix $A(G)$, which is circulant and symmetric. By standard theory (see \cite{Gray2006}), it holds that
\begin{equation*}
(\Lambda)_{k,k}= 2\cos\left(\frac{2\pi k}{n}\right), \qquad (U)_{i,j}= \frac{1}{\sqrt{n}}\textrm{e}^{-2\pi\textrm{i}\frac{ij}{n}}, \qquad \mbox{for } i,j,k =0,\ldots,n-1.
\end{equation*}
Therefore, since $w^\alpha(x_i,x_j) = (\Delta^\alpha)_{i,j}$, then
\begin{align*}
w^\alpha(x_i,x_j)&=\frac{1}{n}\sum_{k=0}^{n-1}\left(2- 2\cos\left(\frac{2\pi k}{n}\right)\right)^\alpha\textrm{e}^{-2\pi\textrm{i}\frac{ki}{n}}\textrm{e}^{2\pi\textrm{i}\frac{kj}{n}}\\
&= \frac{1}{n}\sum_{k=1}^{n-1}\left(4\sin^2\left(\frac{\pi k}{n}\right)\right)^\alpha\textrm{e}^{-2\pi\textrm{i}\frac{k(i-j)}{n}}\\
&= \frac{1}{n}\sum_{k=1}^{n-1}\left(4\sin^2\left(\frac{\pi k}{n}\right)\right)^\alpha\left[\cos\left(\frac{2\pi k(i-j)}{n}\right)+ \textrm{i}\sin\left(\frac{2\pi k(i-j)}{n}\right) \right]\\
&= \frac{1}{n}\sum_{k=1}^{n-1}\left(4\sin^2\left(\frac{\pi k}{n}\right)\right)^\alpha\cos\left(\frac{2\pi k(i-j)}{n}\right).
\end{align*}
Because $x_i\sim x_j$ in $G$ if and only if $(i - j)\equiv \pm 1 \mod{n}$, then it is immediate to check now that
\begin{equation*}
\frac{w^\alpha(x_i,x_j)}{w^\alpha(x_i,x_{k})} = 1 \qquad \forall \alpha \in (0,1],\, \forall x_i,x_j,x_k \in X \mbox{ such that } x_i\sim x_j,\, x_i \sim x_k \, \mbox{in } \hat{G}.
\end{equation*}
By Theorem \ref{thm:embedding} we conclude that $\hat{G} \hookrightarrow_{\operatorname{id}} \hat{G}^\alpha$.
\end{proof}
The above lemma and the next counterexamples can be used to produce other embeddings or counterexamples, respectively, by creating new graphs just from the Kronecker sums of their adjacency matrices. For instance, the cubic lattice with periodic boundary conditions of dimension $d$ is obtained by $d$ Kronecker sums of the $1d$ cycle, see \cite{Michelitsch2017}.

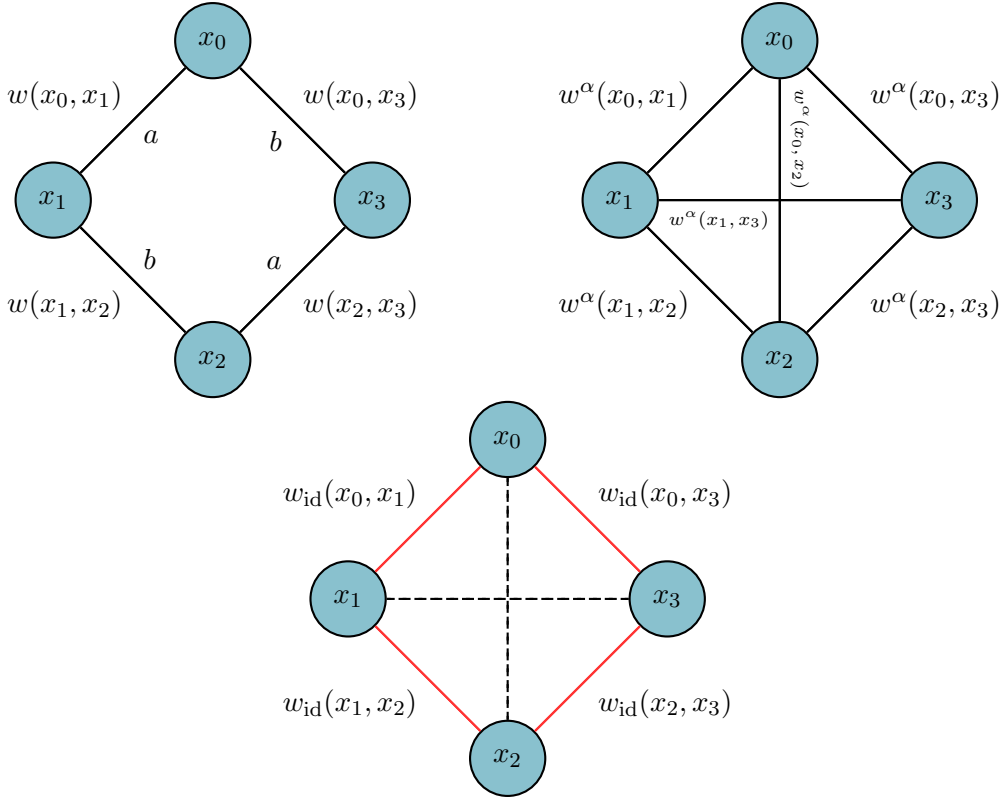
\begin{figure}
	\begin{center}
		\begin{minipage}{.45\textwidth}
			\begin{tikzpicture}[-latex, auto,node distance =3 cm ,on grid ,
			thick ,
			state2/.style ={ circle ,top color =white , bottom color = gray ,
				draw,black , text=black , minimum width =1 cm},
			state3/.style ={ circle ,top color =white , bottom color = white ,
				draw,white , text=white , minimum width =1 cm},
			state/.style={circle,top color = palecerulean , bottom color = palecerulean,
				draw,black , text=black , minimum width =1 cm}]
			
			\node[state] (E) [] {$x_{0}$};
			\node[state] (F) [below left=of E] {$x_{1}$};
			\node[state] (G) [below right=of F] {$x_2$};
			\node[state] (H) [above right=of G] {$x_{3}$};

			\path (G) edge [black,-] node[] {$a$} (H);
			\path (H) edge [black,-] node[] {$w(x_2,x_3)$} (G);
			\path (G) edge [black,-] node[] {$w(x_1,x_2)$} (F);
			\path (F) edge [black,-] node[] {$b$} (G);
			\path (E) edge [black,-] node[] {$a$} (F);
			\path (F) edge [black,-] node[] {$w(x_0,x_1)$} (E);
			\path (H) edge [black,-] node[] {$b$} (E);
			\path (E) edge [black,-] node[] {$w(x_0,x_3)$} (H);
			\end{tikzpicture}
		\end{minipage}
		\begin{minipage}{.45\textwidth}
			\begin{tikzpicture}[-latex, auto,node distance =3 cm ,on grid ,
			thick ,
			state2/.style ={ circle ,top color =white , bottom color = gray ,
				draw,black , text=black , minimum width =1 cm},
			state3/.style ={ circle ,top color =white , bottom color = white ,
				draw,white , text=white , minimum width =1 cm},
			state/.style={circle ,top color =palecerulean , bottom color =palecerulean,
				draw,black , text=black , minimum width =1 cm}]
			
			\node[state] (E) [] {$x_{0}$};
			\node[state] (F) [below left=of E] {$x_{1}$};
			\node[state] (G) [below right=of F] {$x_2$};
			\node[state] (H) [above right=of G] {$x_{3}$};

			\path (G) edge [black,-] node[] {} (H);
			\path (H) edge [black,-] node[] {$w^\alpha(x_2,x_3)$} (G);
			\path (G) edge [black,-] node[] {$w^\alpha(x_1,x_2)$} (F);
			\path (F) edge [black,-] node[] {} (G);
			\path (E) edge [black,-] node[] {} (F);
			\path (F) edge [black,-] node[] {$w^\alpha(x_0,x_1)$} (E);
			\path (H) edge [black,-] node[] {} (E);
			\path (E) edge [black,-] node[] {$w^\alpha(x_0,x_3)$} (H);
			
			\path (E) edge [black,-] node[sloped, above left] {\tiny{$w^\alpha(x_0,x_2)$}} (G);
			\path (G) edge [black,-] node[] {} (E);
			\path (F) edge [black,-] node[below left] {\tiny{$w^\alpha(x_1,x_3)$}} (H);
			\path (H) edge [black,-] node[] {} (F);
			\end{tikzpicture}
		\end{minipage}
		\begin{minipage}{.45\textwidth}
			\begin{tikzpicture}[-latex, auto,node distance =3 cm ,on grid ,
			thick ,
			state2/.style ={ circle ,top color =white , bottom color = gray ,
				draw,black , text=black , minimum width =1 cm},
			state3/.style ={ circle ,top color =white , bottom color = white ,
				draw,white , text=white , minimum width =1 cm},
			state/.style={circle ,top color =palecerulean , bottom color = palecerulean,
				draw,black , text=black , minimum width =1 cm}]
			
			\node[state] (E) [] {$x_{0}$};
			\node[state] (F) [below left=of E] {$x_{1}$};
			\node[state] (G) [below right=of F] {$x_2$};
			\node[state] (H) [above right=of G] {$x_{3}$};

			\path (G) edge [color=red!80,-] node[] {} (H);
			\path (H) edge [color=red!80,-] node[] {\textcolor{black}{$w_{\operatorname{id}}(x_2,x_3)$}} (G);
			\path (G) edge [color=red!80,-] node[] {\textcolor{black}{$w_{\operatorname{id}}(x_1,x_2)$}} (F);
			\path (F) edge [color=red!80,-] node[] {} (G);
			\path (E) edge [color=red!80,-] node[] {} (F);
			\path (F) edge [color=red!80,-] node[] {\textcolor{black}{$w_{\operatorname{id}}(x_0,x_1)$}} (E);
			\path (H) edge [color=red!80,-] node[] {} (E);
			\path (E) edge [color=red!80,-] node[] {\textcolor{black}{$w_{\operatorname{id}}(x_0,x_3)$}} (H);
			
			\path[dashed] (E) edge [black,-] node[sloped, above left] {} (G);
			\path[dashed] (G) edge [black,-] node[] {} (E);
			\path[dashed] (F) edge [black,-] node[below left] {} (H);
			\path[dashed] (H) edge [black,-] node[] {} (F);
			\end{tikzpicture}
		\end{minipage}
	\end{center}
	\caption{Visual representation of the graph $G$ in Counterexample \ref{example:1} (up-left), of the graph $G^\alpha$ (up-right) characterized by the edge-weights $w^\alpha$ induced by $\Delta^\alpha$, and finally of the subgraph $\operatorname{id}(G)\subset G^\alpha$ (bottom), where the common edges shared by $G^\alpha$ and $\operatorname{id}(G)$ are depicted in red. In particular, $w_{\operatorname{id}}(x_i,x_j)=w^\alpha(x_i,x_j)$.}\label{fig:ex:1}
\end{figure}
\begin{cexample}\label{example:1}
Let us consider the graph $G$ in Figure \ref{fig:ex:1} (up-left):
\begin{itemize}
	\item $X=\{x_0, x_1, x_2, x_3\}$;
	\item $w(x_0,x_2)=w(x_1,x_3)=0$, $w(x_0,x_1)=w(x_2,x_3)=a$, $w(x_0,x_3)=w(x_1,x_2)=b$;
	\item $a+b=1$;
	\item $\mu(x_i)=\deg(x_i) = 1$ for every $x_i\in X$.
\end{itemize}	
That is, $G$ is a cycle as in Lemma \ref{lem:cycle}, but weighted. Since in this example we choose the weights such that the counting measure and the degree measure agree, then $G=\hat{G}$ and it holds that
$$
\Delta = \hat{\Delta}= \begin{bmatrix}
1 & \shortminus a & 0 & \shortminus b \\
\shortminus a & 1 & \shortminus b & 0 \\
0 & \shortminus b & 1 & \shortminus a \\
\shortminus b & 0 & \shortminus a & 1
\end{bmatrix}.
$$
By easy computations we have that	
\begin{equation}\label{eq:1}
\Delta^\alpha = \begin{bmatrix} \frac{(2a)^\alpha + (2b)^\alpha + 2^\alpha }{4} &
\frac{-(2a)^\alpha + (2b)^\alpha - 2^\alpha }{4}&
\frac{-(2a)^\alpha - (2b)^\alpha + 2^\alpha }{4}&
\frac{(2a)^\alpha - (2b)^\alpha - 2^\alpha }{4}\\[0.3cm]
\frac{-(2a)^\alpha + (2b)^\alpha - 2^\alpha }{4}&
\frac{(2a)^\alpha + (2b)^\alpha + 2^\alpha }{4} &
\frac{(2a)^\alpha - (2b)^\alpha - 2^\alpha }{4}&
\frac{-(2a)^\alpha - (2b)^\alpha + 2^\alpha }{4}\\[0.3cm]
\frac{-(2a)^\alpha - (2b)^\alpha + 2^\alpha }{4}&
\frac{(2a)^\alpha - (2b)^\alpha - 2^\alpha }{4}&
\frac{(2a)^\alpha + (2b)^\alpha + 2^\alpha }{4}&
\frac{-(2a)^\alpha + (2b)^\alpha - 2^\alpha }{4}\\[0.3cm]
\frac{(2a)^\alpha - (2b)^\alpha - 2^\alpha }{4}&
\frac{-(2a)^\alpha - (2b)^\alpha + 2^\alpha }{4}&
\frac{-(2a)^\alpha + (2b)^\alpha - 2^\alpha }{4}&
\frac{(2a)^\alpha + (2b)^\alpha + 2^\alpha }{4}
\end{bmatrix},
\end{equation}	
from which we get the new edge-weights $w^\alpha(x_i,x_j):=\left(\Delta^\alpha\right)_{i,j}$ associated to the graph $G^\alpha=(X,w^\alpha,\boldsymbol{1})$, see Figure \ref{fig:ex:1} (up-right).  The subgraph $\operatorname{id}(G)\subset G^\alpha$ (see Definition \ref{def:embedding&dynamics} and Figure \ref{fig:ex:1}, bottom), is such that $\operatorname{id}(G)=(X,w_{{\operatorname{id}}}, \boldsymbol{1})$:
\begin{itemize}
	\item $X=\{x_0, x_1, x_2, x_3\}$;
	\item $w_{{\operatorname{id}}}(x_i,x_j)= w^\alpha(x_i,x_j)$ if and only if $\{x_i,x_j\}$ is an edge of $G$, otherwise is zero.
\end{itemize}
Consider now the normalizations $\hat{G}$, $\hat{G}^\alpha$ and $\operatorname{id}\left(\hat{G}\right)\subset \hat{G}^\alpha$. From Theorem \ref{thm:embedding} we have that $\hat{\Delta} = \hat{\Delta}_{\operatorname{id}(\hat{G})}$, that is, $\hat{G}\hookrightarrow_{{\operatorname{id}}} \hat{G}^\alpha$ if and only if 
$$
\begin{cases}
\frac{a}{b}=\frac{w(x_0,x_1)}{w(x_0,x_3)} =   \frac{w^\alpha(x_0,x_1)}{w^\alpha(x_0,x_3)}, \\ \frac{a}{b}=\frac{w(x_1,x_0)}{w(x_1,x_2)} =   \frac{w^\alpha(x_1,x_0)}{w^\alpha(x_1,x_2)},\\
\frac{b}{a}=\frac{w(x_2,x_1)}{w(x_2,x_3)} =   \frac{w^\alpha(x_2,x_1)}{w^\alpha(x_2,x_3)}, \\
\frac{b}{a}=\frac{w(x_3,x_2)}{w(x_3,x_0)} =   \frac{w^\alpha(x_3,x_2)}{w^\alpha(x_3,x_0)}.
\end{cases}
$$
By direct computations, it can be proved that, for example,
$$
\frac{a}{b} = \frac{w(x_0,x_1)}{w(x_0,x_3)} =   \frac{w^\alpha(x_0,x_1)}{w^\alpha(x_0,x_3)},
$$	
if and only if $a=b=1/2$ (for any $\alpha\in(0,1)$), in agreement with Lemma \ref{lem:cycle}. So, in the case of a weighted cycle ($a\neq b$), $\hat{G}$ can not be embedded in $\hat{G}^\alpha$, i.e., $G^\alpha$ is not compatible with $G$.
\end{cexample}

\begin{cexample}\label{example:2}
We consider here the graph $G$ in Figure \ref{fig:ex:2} (up-left), which is a modification of the graph in Counterexample \ref{example:1} equipped with the counting measure: the difference is that now $G$ is simple with $w(x_0,x_3)=0$ and  $w(x_0,x_1)=w(x_1,x_2)=w(x_2,x_3)=1$.
It holds that
	$$
	\Delta =\begin{bmatrix}
	1 & \shortminus 1 & 0 & 0 \\
	\shortminus 1 & 2 & \shortminus 1 & 0 \\
	0 & \shortminus 1 & 2 & \shortminus 1 \\
	0 & 0 & \shortminus 1 & 1
	\end{bmatrix}, \qquad 
	\hat{\Delta} =\begin{bmatrix}
	1 & \shortminus 1 & 0 & 0 \\
	\shortminus 0.5 & 1 & \shortminus 0.5 & 0 \\
	0 & \shortminus 0.5 & 1 & \shortminus 0.5 \\
	0 & 0 & \shortminus 1 & 1
	\end{bmatrix},
	$$
where clearly $\hat{\Delta}$ is the graph Laplacian associated to the normalization $\hat{G}$ of $G$. Noticing that
$$
\Delta = \begin{bmatrix}
2 &0 & 0 & 0 \\
0 & 2 & 0 & 0 \\
0 & 0 & 2 & 0 \\
0 & 0 & 0 & 2
\end{bmatrix}-
\begin{bmatrix}
1 &  1 & 0 & 0 \\
 1 & 0 &  1 & 0 \\
0 &  1 & 0 &  1 \\
0 & 0 &  1 & 1
\end{bmatrix},
$$
then for any fixed $\alpha \in (0,1]$, it is possible to explicitly express the weights $w^\alpha(x_i,x_j)$ of the fractional graph $G^\alpha$:
$$
w^\alpha(x_i,x_j)=\left(\Delta^\alpha\right)_{i,j}= \frac{1}{2}\sum_{k=1}^3 \left[4\sin^2\left(\frac{\pi k}{8}\right)\right]^\alpha \cos\left(\frac{\pi k(2i+1)}{8}\right)\cos\left(\frac{\pi k(2j+1)}{8}\right),
$$
see \cite[Equation (22)]{Bozzo1995}. By direct computation, it is now immediate to check that
	$$
1 = \frac{w(x_1,x_0)}{w(x_1,x_2)} \neq  \frac{w^\alpha(x_1,x_0)}{w^\alpha(x_1,x_2)}, \quad 1 = \frac{w(x_2,x_1)}{w(x_2,x_3)} \neq  \frac{w^\alpha(x_2,x_1)}{w^\alpha(x_2,x_3)}
	$$	
for any $\alpha\neq 1$. Therefore, by Theorem \ref{thm:embedding} we have that $\hat{\Delta}\neq \hat{\Delta}^\alpha_{\operatorname{id}(\hat{G})}$, that is, $\hat{G}$ can not be embedded in $\hat{G}^\alpha$, and therefore $G^\alpha$ is not compatible with $G$.
\begin{figure}
	\begin{center}
		\begin{minipage}{.45\textwidth}
			\begin{tikzpicture}[-latex, auto,node distance =3 cm ,on grid ,
			semithick ,
			state2/.style ={ circle ,top color =white , bottom color = gray ,
				draw,black , text=black , minimum width =1 cm},
			state3/.style ={ circle ,top color =white , bottom color = white ,
				draw,white , text=white , minimum width =1 cm},
			state/.style={circle ,top color =palecerulean , bottom color =palecerulean,
				draw,black , text=black , minimum width =1 cm}]
			
			\node[state] (E) [] {$x_{0}$};
			\node[state] (F) [below left=of E] {$x_{1}$};
			\node[state] (G) [below right=of F] {$x_2$};
			\node[state] (H) [above right=of G] {$x_{3}$};

			\path (G) edge [black,-] node[] {$1$} (H);
			\path (H) edge [black,-] node[] {$w(x_2,x_3)$} (G);
			\path (G) edge [black,-] node[] {$w(x_1,x_2)$} (F);
			\path (F) edge [black,-] node[] {$1$} (G);
			\path (E) edge [black,-] node[] {$1$} (F);
			\path (F) edge [black,-] node[] {$w(x_0,x_1)$} (E);
			\end{tikzpicture}
		\end{minipage}
		\begin{minipage}{.45\textwidth}
			\begin{tikzpicture}[-latex, auto,node distance =3 cm ,on grid ,
			semithick ,
			state2/.style ={ circle ,top color =white , bottom color = gray ,
				draw,black , text=black , minimum width =1 cm},
			state3/.style ={ circle ,top color =white , bottom color = white ,
				draw,white , text=white , minimum width =1 cm},
			state/.style={circle ,top color =palecerulean , bottom color =palecerulean,
				draw,black , text=black , minimum width =1 cm}]
			
			\node[state] (E) [] {$x_{0}$};
			\node[state] (F) [below left=of E] {$x_{1}$};
			\node[state] (G) [below right=of F] {$x_2$};
			\node[state] (H) [above right=of G] {$x_{3}$};

			\path (G) edge [black,-] node[] {} (H);
			\path (H) edge [black,-] node[] {$w^\alpha(x_2,x_3)$} (G);
			\path (G) edge [black,-] node[] {$w^\alpha(x_1,x_2)$} (F);
			\path (F) edge [black,-] node[] {} (G);
			\path (E) edge [black,-] node[] {} (F);
			\path (F) edge [black,-] node[] {$w^\alpha(x_0,x_1)$} (E);
			\path (H) edge [black,-] node[] {} (E);
			\path (E) edge [black,-] node[] {$w^\alpha(x_0,x_3)$} (H);
			
			\path (E) edge [black,-] node[sloped, above left] {\tiny{$w^\alpha(x_0,x_2)$}} (G);
			\path (G) edge [black,-] node[] {} (E);
			\path (F) edge [black,-] node[below left] {\tiny{$w^\alpha(x_1,x_3)$}} (H);
			\path (H) edge [black,-] node[] {} (F);
			\end{tikzpicture}
		\end{minipage}
		\begin{minipage}{.45\textwidth}
			\begin{tikzpicture}[-latex, auto,node distance =3 cm ,on grid ,
			semithick ,
			state2/.style ={ circle ,top color =white , bottom color = gray ,
				draw,black , text=black , minimum width =1 cm},
			state3/.style ={ circle ,top color =white , bottom color = white ,
				draw,white , text=white , minimum width =1 cm},
			state/.style={circle ,top color =palecerulean , bottom color = palecerulean,
				draw,black , text=black , minimum width =1 cm}]
			
			\node[state] (E) [] {$x_{0}$};
			\node[state] (F) [below left=of E] {$x_{1}$};
			\node[state] (G) [below right=of F] {$x_2$};
			\node[state] (H) [above right=of G] {$x_{3}$};

			\path (G) edge [color=red!80,-] node[] {} (H);
			\path (H) edge [color=red!80,-] node[] {\textcolor{black}{$w_{\operatorname{id}}(x_2,x_3)$}} (G);
			\path (G) edge [color=red!80,-] node[] {\textcolor{black}{$w_{\operatorname{id}}(x_1,x_2)$}} (F);
			\path (F) edge [color=red!80,-] node[] {} (G);
			\path (E) edge [color=red!80,-] node[] {} (F);
			\path (F) edge [color=red!80,-] node[] {\textcolor{black}{$w_{\operatorname{id}}(x_0,x_1)$}} (E);
			\path[dashed] (H) edge [black,-] node[] {} (E);
			\path[dashed] (E) edge [black,-] node[] {} (H);
			
			\path[dashed] (E) edge [black,-] node[sloped, above left] {} (G);
			\path[dashed] (G) edge [black,-] node[] {} (E);
			\path[dashed] (F) edge [black,-] node[below left] {} (H);
			\path[dashed] (H) edge [black,-] node[] {} (F);
			\end{tikzpicture}
		\end{minipage}
	\end{center}
	\caption{Visual representation of the graph $G$ in Counterexample \ref{example:2} (up-left), of the graph $G^\alpha$ (up-right) characterized by the edge-weights $w^\alpha$ induced by $\Delta^\alpha$, and finally of the subgraph $\operatorname{id}(G)\subset G^\alpha$ (bottom), where the common edges shared by $G^\alpha$ and $\operatorname{id}(G)$ are depicted in red. In particular, $w_{\operatorname{id}}(x_i,x_j)=w^\alpha(x_i,x_j)$.}\label{fig:ex:2}
\end{figure}
\end{cexample}	

\begin{cexample}\label{cexample:dpathlaplacian}
Consider now again a cycle $G=(X,w,\boldsymbol{1})$ with $X=\{x_0,x_1,x_2, x_3\}$ and $w$ given by
$$
w(x_0,x_1)=1, \quad w(x_1,x_2)=\frac{1}{10}, \quad w(x_2,x_3)=\frac{1}{10}, \quad w(x_3,x_0)=\frac{1}{10}.
$$
Let $d_w$ be the shortest-path distance, that is
$$
d_w(x_i,x_j):= \min_{n\in \N} \left\{ \sum_{k=1}^n w(x_{i_k}, x_{i_{k+1}}) \, : \, x_{i_1}=x_i\sim\ldots \sim x_{i_n}=x_j \right\},
$$
and $h_\alpha(t)=t^{-\alpha}$. It is immediate to check that
$$
\frac{w(x_0,x_1)}{w(x_0,x_3)}=10 \neq \frac{1}{3^\alpha} = \frac{\kappa_{\alpha,1}(x_0,x_1)}{\kappa_{\alpha,1}(x_0,x_3)},
$$
that is, by \eqref{eq:001} of Lemma \ref{lem:embedding_path-graph}, $G_\alpha$ is not compatible with $G$ for any $\alpha >0$.
\end{cexample}

\section{Regularized fractional graph Laplacian}\label{sec:regularized_fractional_graph-Laplacian}
Let $\Delta^\alpha$ be the fractional power of the graph Laplacian described in Subsection~\ref{ssec:fractional_graph_laplacian} for a connected graph $G = (X,w,\boldsymbol{1})$. This means that $\Delta^\alpha$ is the Laplacian of a new complete loop-less graph $G^\alpha=(X,w^\alpha,\boldsymbol{1})$ with weights
\begin{equation}
w^{\alpha}(x_i,x_j) = - (\Delta^\alpha)_{i,j}, \qquad i\neq j.
\end{equation}
We want to produce a regularized version $\prescript{}{r}{G}^\alpha= (X,\prescript{}{r}{w}^\alpha,\boldsymbol{1})$ of $G^\alpha$ for which it holds that the normalization of $G$ can be always embedded in the normalization of $\prescript{}{r}{G}^\alpha$, i.e., $\hat{G} \hookrightarrow_{\operatorname{id}} \prescript{}{r}{\hat{G}}^\alpha$ for $\operatorname{id}$ the identity map. To achieve this we consider the graph $\prescript{}{r}{G}^\alpha$ built on the same nodes $X$ of $G$ and $G^\alpha$, and edge-weight function
\begin{equation}\label{eq:regularized_weight}
\prescript{}{r}{w}^\alpha(x_i,x_j):= \begin{cases}
-\left(\Delta^\alpha\right)_{i,j} & \mbox{if } \{x_i,x_j\} \notin E,\\
w(x_i,x_j)  & \mbox{if } \{x_i,x_j\} \in E.
\end{cases}
\end{equation}
where $E$ is the set of edges with respect to the original weight function, i.e., $E = \{ (x_i,x_j) \,:\; w(x_i,x_j) \neq 0 \}$. Then, the graphs $\prescript{}{r}{\hat{G}}^\alpha$ and $\hat{G}$ satisfy by construction condition~\eqref{thm:embedding_eq1.2'} of Theorem~\ref{thm:embedding}, and therefore $\hat{G} \hookrightarrow_{\operatorname{id}} \prescript{}{r}{\hat{G}}^\alpha$. Equivalently, $\prescript{}{r}{G}^\alpha$ is compatible with $G$.

\begin{proposition}
	Given a weighted connected graph $G = (X,w,\boldsymbol{1})$, the graph $\prescript{}{r}{G}^\alpha = (X,\prescript{}{r}{w}^\alpha,\boldsymbol{1})$ for $\prescript{}{r}{w}^\alpha$ in~\eqref{eq:regularized_weight} is such that $\hat{G} \hookrightarrow_{\operatorname{id}} \prescript{}{r}{\hat{G}}^\alpha$ for $\operatorname{id}$ the identity map, that is, $\prescript{}{r}{G}^\alpha$ is compatible with $G$.
\end{proposition}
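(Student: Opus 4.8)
The strategy is to verify the computational criterion~\eqref{thm:embedding_eq1.2'} of Theorem~\ref{thm:embedding} and then invoke the equivalence between embedding and compatibility. As a preliminary step I would check that $\prescript{}{r}{G}^\alpha$ is an admissible graph in the sense of our standing assumptions: the weight $\prescript{}{r}{w}^\alpha$ is symmetric (both $w$ and $\Delta^\alpha$ are, the latter because $\Delta$ is selfadjoint for the counting measure), it is loop-free, and it is strictly positive off the diagonal --- on edges of $G$ this is clear, and on the remaining pairs $\{x_i,x_j\}\notin E$ because Proposition~\ref{prop:fractional_decay_rate} gives $-(\Delta^\alpha)_{i,j} = w^\alpha(x_i,x_j) > 0$. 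Hence $\prescript{}{r}{G}^\alpha$ is a connected (indeed complete) graph whose edge-set contains $E$, so that ``compatibility with $G$'' is well-posed.

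Next I would fix nodes $x_i, x_j, x_k \in X$ with $x_j \sim x_i$ and $x_k \sim x_i$ in $\hat{G}$. Since $\hat{G}$ and $G$ have the same edge-set, both $\{x_i,x_j\}$ and $\{x_i,x_k\}$ lie in $E$, and therefore the second branch of the definition~\eqref{eq:regularized_weight} yields $\prescript{}{r}{w}^\alpha(x_i,x_j) = w(x_i,x_j)$ and $\prescript{}{r}{w}^\alpha(x_i,x_k) = w(x_i,x_k)$. Consequently
\[
\frac{w(x_i,x_j)}{w(x_i,x_k)} = \frac{\prescript{}{r}{w}^\alpha(x_i,x_j)}{\prescript{}{r}{w}^\alpha(x_i,x_k)},
\]
so condition~\eqref{thm:embedding_eq1.2'} holds for the pair $(\hat{G},\prescript{}{r}{\hat{G}}^\alpha)$ with $\phi = \operatorname{id}$, and Theorem~\ref{thm:embedding} gives $\hat{G} \hookrightarrow_{\operatorname{id}} \prescript{}{r}{\hat{G}}^\alpha$.

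It remains to pass from this embedding to compatibility. The map $\operatorname{id}$ is a bijection between the node-set of $G$ and that of $\prescript{}{r}{G}^\alpha$, so the reasoning in the last part of the proof of Theorem~\ref{thm:SE&Embedding_equivalence} applies verbatim: $\hat{G} \hookrightarrow_{\operatorname{id}} \prescript{}{r}{\hat{G}}^\alpha$ forces the transition probabilities of $\hat{G}$ and of $\operatorname{id}(\hat{G}) \subset \prescript{}{r}{\hat{G}}^\alpha$ to coincide, which by Definition~\ref{def:SE} is exactly the assertion that $\prescript{}{r}{G}^\alpha$ is compatible with $G$; alternatively, Corollary~\ref{cor:restricted_random_walks} packages the same conclusion in terms of $G$-conditioned random walks. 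No genuine obstacle arises here: the regularization is engineered precisely so that the $G$-edges retain their original weights, and the ratio condition~\eqref{thm:embedding_eq1.2'} only inspects those edges; the one point deserving a sentence of care is the admissibility check above, which is immediate from Proposition~\ref{prop:fractional_decay_rate}.
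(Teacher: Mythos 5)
Your proposal is correct and follows essentially the same route as the paper, which proves the proposition in the single sentence preceding its statement: condition \eqref{thm:embedding_eq1.2'} holds ``by construction'' because the regularization leaves the weights on the edges of $E$ untouched, and then Theorem~\ref{thm:embedding} and Theorem~\ref{thm:SE&Embedding_equivalence} give the embedding and the compatibility. Your additional admissibility check (completeness and positivity of $\prescript{}{r}{w}^\alpha$ off $E$ via Proposition~\ref{prop:fractional_decay_rate}) is a sensible explicit verification of what the paper takes for granted.
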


\begin{example}\label{example:cycleembedding}
	Let us expand Counterexample \ref{example:1} in this way:
	\begin{itemize}
		\item $X=\{x_i \, : \, i=1,\ldots,10\}$;
		\item the graph is a cycle defined by the walk $x_1 \sim x_2 \sim \ldots \sim x_{10} \sim x_1$;
		\item $w(x_1,x_{10})=10$, and all the other weights are set to $w(x_i,x_{i+1}) = 1$ for $i=1,\ldots,9$.
	\end{itemize}
	Assuming that we will start all the random walks from $x_1$, the dynamics induced by $w$ tells us that we will most likely stay in $x_1$ and $x_{10}$, and that we will rarely visit the central nodes. It is then desirable to introduce the possibility to have jumps between nodes, in order to let our random walker to be able to visit more nodes. We have seen that introducing jumps is the same as introducing new edges between nodes, and we will do it in two ways: by the fractional graph $G^\alpha$ induced by $\Delta^\alpha$ and described in Section~\ref{ssec:fractional_graph_laplacian}, and by the graph $\prescript{}{r}{G}^\alpha$ described in Section \ref{sec:regularized_fractional_graph-Laplacian}. The result for this setting are depicted in Figure~\ref{fig:cycleembedding}.
	\begin{figure}[htbp]
		\centering
		\includegraphics[width=\columnwidth]{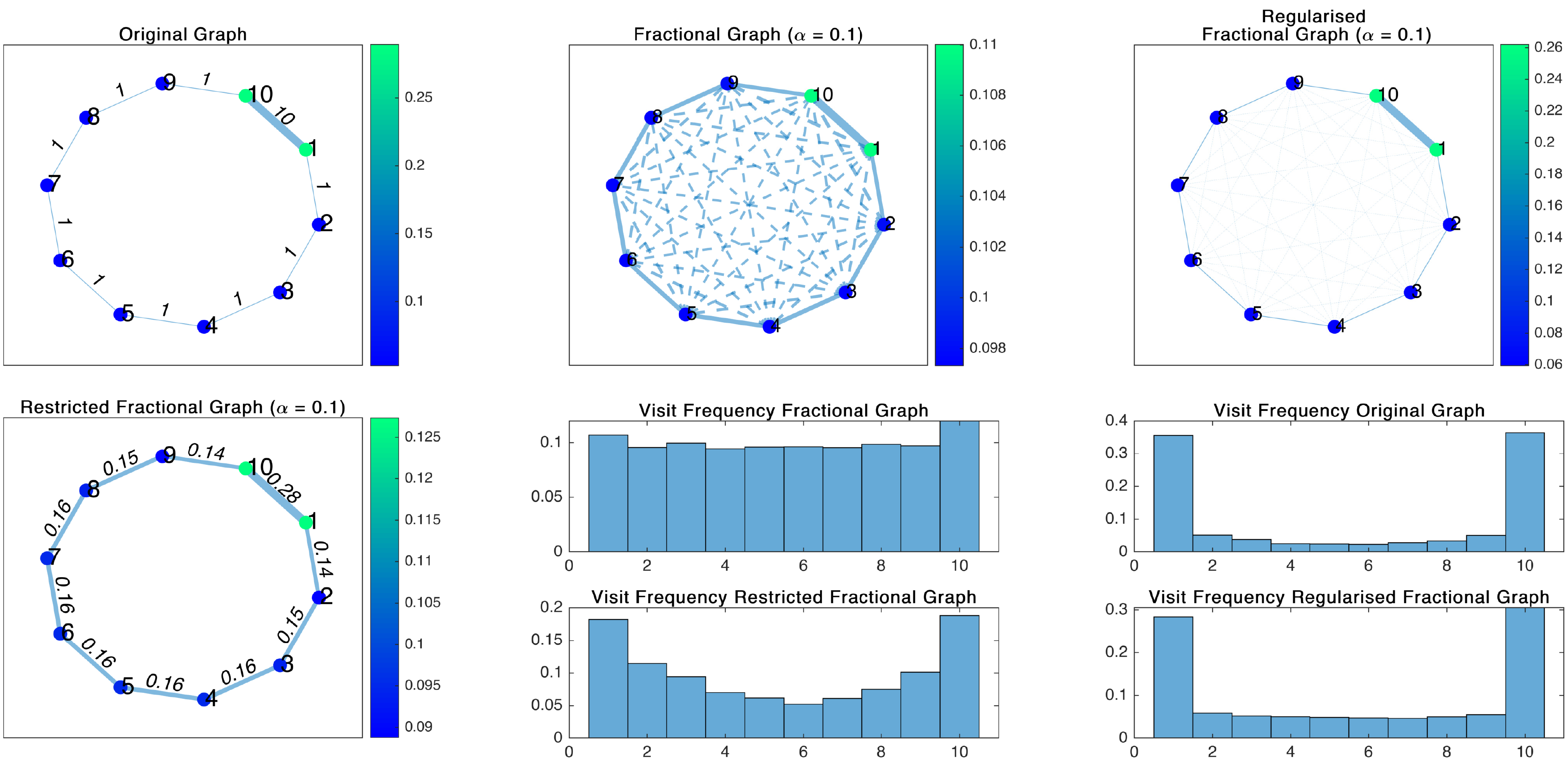}
		\caption{Example~\ref{example:cycleembedding}. The panels on the first line represent the weights of the original graph $G$, of $G^\alpha$, and of $\prescript{}{r}{G}^\alpha$. The width of the edges is proportional to its weight, while the color of the nodes is proportional to the value of the stationary distribution induced by the relative graph Laplacian matrix. On the second line, we report the weights induced on $G$ by $\Delta^\alpha$, and as we can observe their ratio differ from the original ones. The histograms represent the frequency visit to the nodes obtained in 1000 random walks of 20 steps simulated on the graphs $G$, $G^\alpha$, $\prescript{}{r}{G}^\alpha$, and on the graphs $G^\alpha$, $\prescript{}{r}{G}^\alpha$ conditioned to the edges of $G$.}
		\label{fig:cycleembedding}
	\end{figure}
	In the first panel of the second line we have reported the weights for the edges of the original graph $G$ induced by $G^\alpha$, and as we have proven in Counterexample~\ref{example:1} their ratio are different from the original ones; they can be confronted with the ones that are given instead in the first panel of the first line. This is confirmed also by the fact that the frequency histograms for the visit of the nodes of the original graph, and of the fractional Laplacian conditioned on the edges of $G$, in the last panel of the second line, are sensibly different. It is exactly the meaning of Corollary~\ref{cor:restricted_random_walks}: $G^\alpha$ is a complete graph on the same node-set $X$ of $G$, and therefore $E^\alpha \supset E$; then the dynamics on $G^\alpha$ is compatible with the dynamics on $G$ if and only if any random walk on $G^\alpha$ conditioned to move only along the edges of $G$ is scholastically equivalent in the wide sense with a random walk on $G$. It is clear then, from the visit frequencies of the $G$-conditioned random walks on $G^\alpha$ and $\prescript{}{r}{G}^\alpha$, that $G^\alpha$ is not compatible with $G$ while $\prescript{}{r}{G}^\alpha$ is. The second and third panel of the first line represents the graphs $G^\alpha$ and $\prescript{}{r}{G}^\alpha$, respectively. Finally in the second panel of the second line we observe the difference between the regularized and the standard fractional graph. Indeed, the regularized version maintains the predominance of the nodes $x_1$, and $x_{10}$ while still increasing the visit to the other nodes, while $G^\alpha$ makes all the nodes indistinguishable.
\end{example}

The proposed modification of the weight function on $G$ to preserve the original dynamics, maintains also the motivating choice of having long-range diffusion and random walks on the graph.  Indeed, the decay behavior of the entries of the fractional Laplacian $\Delta^\alpha$ is preserved by the modification in~\eqref{eq:regularized_weight}.

\begin{proposition}\label{pro:decaybound}
	Let $\Delta$ be the Laplacian of an undirected graph $G = (X,\omega,\mathbf{1})$ and $\alpha \in (0,1)$. Then, if $\delta(x_i,x_j) \ge~2$, the following inequality holds
	\begin{equation*}
	\left| \prescript{}{r}{w}^\alpha(x_i,x_j) \right| \le c\left(\frac{\rho(\Delta)}{2\delta(x_i,x_j)-1}\right)^\alpha,
	\end{equation*}
	with a constant $c$ independent from the nodes.
\end{proposition}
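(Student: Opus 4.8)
The plan is to observe that the regularization in~\eqref{eq:regularized_weight} only modifies the weights on pairs of nodes that are adjacent in $G$, hence at combinatorial distance $1$, while it leaves every genuinely long-range entry of $\Delta^\alpha$ untouched; so for $\delta(x_i,x_j)\ge 2$ the estimate we need is essentially the one already established in Proposition~\ref{prop:fractional_decay_rate}. Concretely, I would first note that if $\delta(x_i,x_j)\ge 2$ then $\{x_i,x_j\}\notin E$, since two nodes joined by an edge of $G$ are at combinatorial distance exactly $1$. By the definition~\eqref{eq:regularized_weight} this forces $\prescript{}{r}{w}^\alpha(x_i,x_j)=-(\Delta^\alpha)_{i,j}=w^\alpha(x_i,x_j)$, i.e.\ the regularized weight coincides with the fractional weight on all such pairs.

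Then I would apply Proposition~\ref{prop:fractional_decay_rate} directly to the pair $x_i,x_j$ (which satisfies $\delta(x_i,x_j)>1$ in $G$), obtaining
\[
\bigl|\prescript{}{r}{w}^\alpha(x_i,x_j)\bigr| = w^\alpha(x_i,x_j) \le c\left(\frac{\rho(\Delta)}{2(\delta(x_i,x_j)-1)}\right)^\alpha .
\]
The only thing left is to reconcile the two slightly different denominators in the two statements. For $\delta(x_i,x_j)\ge 2$ one has $2\delta(x_i,x_j)-1\le 4\bigl(\delta(x_i,x_j)-1\bigr)$, hence $\frac{1}{2(\delta(x_i,x_j)-1)}\le \frac{2}{2\delta(x_i,x_j)-1}$, so that
\[
\left(\frac{\rho(\Delta)}{2(\delta(x_i,x_j)-1)}\right)^\alpha \le 2^\alpha\left(\frac{\rho(\Delta)}{2\delta(x_i,x_j)-1}\right)^\alpha \le 2\left(\frac{\rho(\Delta)}{2\delta(x_i,x_j)-1}\right)^\alpha,
\]
and the claimed inequality follows with the node-independent constant $2c$.

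I do not expect a genuine obstacle here: the whole analytic content, namely the Jackson-theorem bound on the entries of $\Delta^\alpha$, is already contained in the proof of Proposition~\ref{prop:fractional_decay_rate}. The substantive point, and the only one really worth spelling out, is that combinatorial distance at least $2$ implies non-adjacency in $G$, so the regularization leaves exactly those entries that the decay bound concerns; the rest is the elementary distance inequality above and bookkeeping with constants.
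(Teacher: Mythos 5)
Your proof is correct and follows essentially the same route as the paper: the key observation in both is that the regularization~\eqref{eq:regularized_weight} only alters weights on edges of $G$, so for $\delta(x_i,x_j)\ge 2$ one has $\prescript{}{r}{w}^\alpha(x_i,x_j)=w^\alpha(x_i,x_j)$ and Proposition~\ref{prop:fractional_decay_rate} applies directly. You are in fact slightly more careful than the paper, which silently passes from the denominator $2(\delta(x_i,x_j)-1)$ of Proposition~\ref{prop:fractional_decay_rate} to the $2\delta(x_i,x_j)-1$ of the statement; your elementary inequality absorbing the discrepancy into the constant is a welcome addition.
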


\begin{proof}
	The proof follows from Proposition~\ref{prop:fractional_decay_rate}, since by~\eqref{eq:regularized_weight} $\prescript{}{r}{w}^\alpha(x_i,x_j) \equiv {w}^\alpha(x_i,x_j)$ for each $\delta(x_i,x_j) \ge~2$.
\end{proof}

As we have seen from Example~\ref{example:cycleembedding}, the regularization process proposed in~\eqref{eq:regularized_weight} tends to greatly reduce the exploration capabilities with respect to the original non-preserving fractional dynamics. A modification that is useful in recovering this effect is substituting
\begin{equation}\label{eq:regularized_weight_withscale}
\prescript{}{r}{w}^\alpha_\beta(x_i,x_j):= \begin{cases}
-\beta \left(\Delta^\alpha\right)_{i,j} & \mbox{if } \{x_i,x_j\} \notin E,\\
w(x_i,x_j)  & \mbox{if } \{x_i,x_j\} \in E.
\end{cases}, \qquad \beta \geq 1,
\end{equation}
where $\beta$ acts as regularization parameter accounting for the increase in weight we have introduced for the edges in $E$. This is a strategy for artificially filling the gap between the smallest of the weights on $E$ and the largest of the weights on $E'\setminus E$, by increasing the latter. Observe that we want also to maintain the decay behavior highlighted in Proposition~\ref{pro:decaybound}, thus a choice for the $\beta$ parameter satisfying all the modeling requirement is  
\begin{equation}\label{eq:heuristic_for_beta}
    \beta = \frac{\displaystyle \min_{\{x_i,x_j\} \in E} {w(x_i,x_j)}}{\displaystyle \max_{\{x_i,x_j\} \notin E}{w^\alpha(x_j,x_j)}}.
\end{equation}

\section{Numerical examples}\label{sec:numerical_examples}

We use this section to showcase the theoretical properties analyzed in the previous sections on some real-world and model networks. We consider the following networks:
\begin{description}
	\item[\texttt{Karate}] a social network of a university karate club with $n=34$ nodes; it is an undirected weighted graph.
	\item[\texttt{Netscience}] a coauthorship network of scientists         
	working on network theory and experiment. As compiled by M. Newman in May  
	2006, we consider here its largest connected component made by 379 scientists.
	\item[\texttt{ca-GrQc}] a coautorship network from the e-print \texttt{arXiv} that covers papers submitted to the ``General Relativity and Quantum Cosmology'' category. The data covers papers in the period from January 1993 to April 2003.
	\item[\texttt{GD97\_b}] a small weighted undirected graph with $n=46$ nodes on the largest connected component from the ``Pajek network: Graph Drawing contest 1997''.
\end{description}
Each of them can be obtained from the \textit{SuiteSparse Matrix Collection}~\cite{SuiteSparse}. Moreover, we use the Barab\'asi-Albert network model~\cite{Barabasi509} to investigate random scale-free networks obtained by means of a preferential attachment mechanism. These are obtained through a generative model starting from an initial connected graph with $n_0$ nodes. The algorithm then proceeds by adding nodes to the network one at a time and connecting them to $ n \leq n_{0}$ existing nodes, with a probability that is proportional to the number of edges the existing nodes already have.

\subsection{Structural decay of the entries}\label{sec:numexp_decay} We consider here for the \texttt{Netscience} network the bound depicted in Proposition~\ref{pro:decaybound}, and show that the same bound applies both to the standard and the regularized version of the fractional Laplacian.

\begin{figure}[htbp]
	\centering
	\includegraphics[width=0.8\columnwidth]{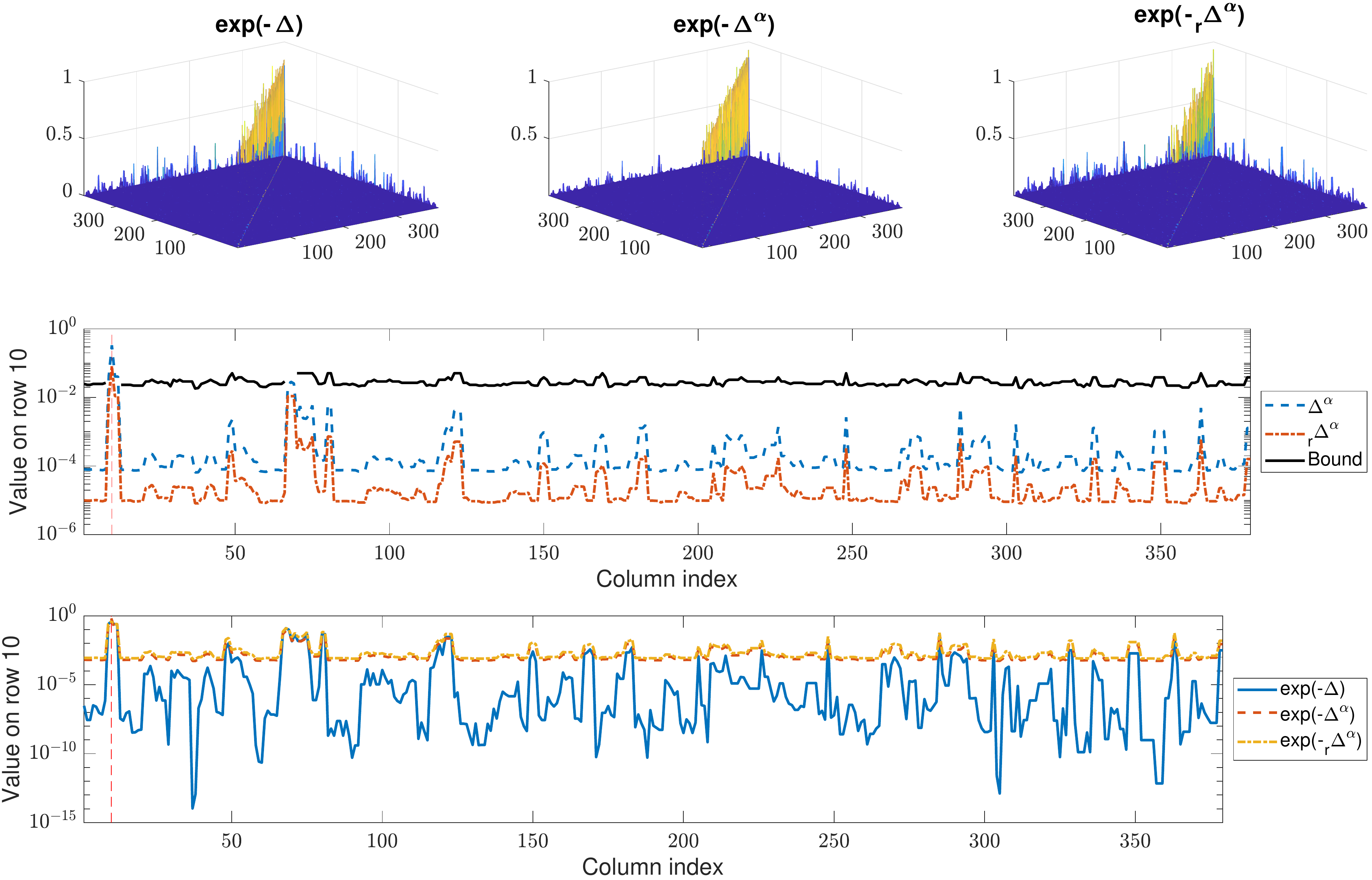}
	\caption{Decay bound for the matrices $\Delta^\alpha$, $\prescript{}{r}{\Delta}^\alpha$ for the complex newtork \texttt{Netscience}, and decay behavior for the relative matrix exponential. All the quantities have been normalized by dividing for the largest entry of the respective matrix, i.e, so that the maximum attained value is always one.}
	\label{fig:decaybound}
\end{figure}

As we observe from Figure~\ref{fig:decaybound}, the reason for which one moves to a non-local version of the random-walk is preserved also by the regularized version, since the decay is mostly unaffected from the regularization process. Indeed, this also shows that we have some margin to set the parameter $\beta$ in~\eqref{eq:regularized_weight_withscale}, to adjust the magnitude of the decay of the fractional weights in the regularized version while maintaining the overall behavior. 

\subsection{Average return probability}\label{sec:numexp_art}
Denoting by $p_{i,j}(t)$ the probability of finding the random walker in the node $j$ at time $t$ starting from the node $i$ at time $t=0$, then we can consider the master equations
\begin{equation*}
\frac{d p_{i,j}(t)}{dt} = - \sum_{l=1}^{N} \hat{\Delta}^\alpha_{l,j} p_{i,l}(t), \qquad \frac{d p_{i,j}(t)}{dt} = - \sum_{l=1}^{N} \prescript{}{r}{\hat{\Delta}}^\alpha_{l,j} p_{i,l}(t),
\end{equation*}
evolving the dynamics of the random walker on the corresponding graphs. To study the diffusive transport we look at the average return probability defined, respectively, by
\begin{equation*}
p_0^{(\alpha)}(t) = \frac{1}{N} \sum_{m=1}^{N} \exp\left(-\lambda_m(\hat{\Delta}^\alpha) t\right), \text{ and } \prescript{}{r}{p_0^{(\alpha)}}(t) = \frac{1}{N} \sum_{m=1}^{N} \exp\left(-\lambda_m(\prescript{}{r}{\hat{\Delta}}^\alpha) t\right).
\end{equation*}
Figure~\ref{fig:average_return_probability} reports the two quantities for the normalized fractional Laplacian $\hat{\Delta}^\alpha$ and for its regularized version $\prescript{}{r}{\hat{\Delta}}^\alpha$. 

\begin{figure}[b]
	\centering
	\subfloat[\texttt{Karate} network, $n = 34$ nodes]{\includegraphics[width=0.8\columnwidth]{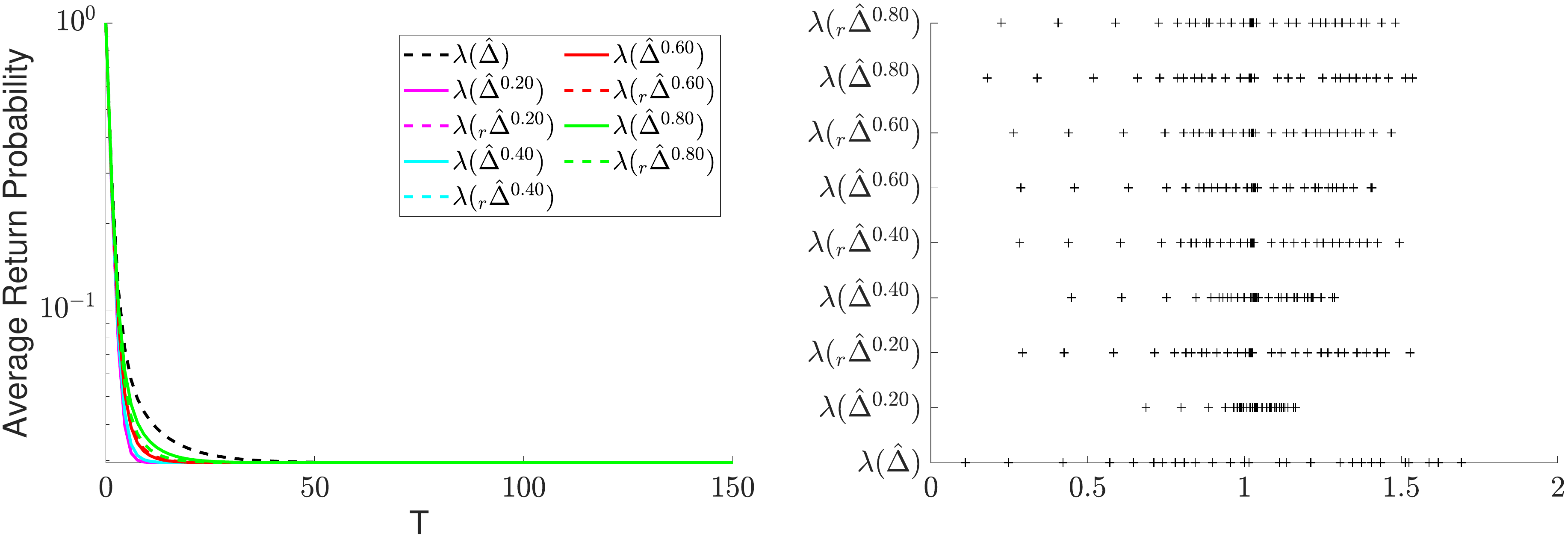}}
	\phantomcaption
\end{figure}
\begin{figure}[b]
\centering
\ContinuedFloat
	\subfloat[\texttt{Netscience} network, $n = 379$]{\includegraphics[width=0.8\columnwidth]{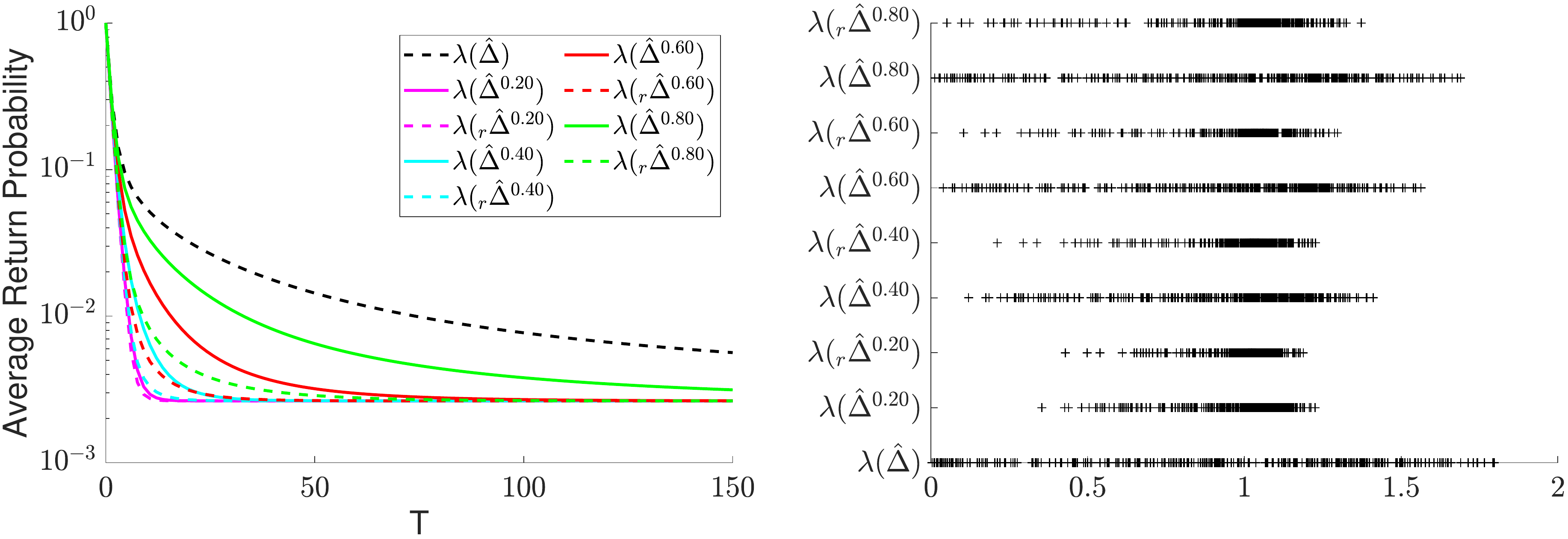}}
	\phantomcaption
\end{figure}
\begin{figure}[t]
\centering
\ContinuedFloat
	\subfloat[\texttt{ca-GrQc} network, $n = 4158$]{\includegraphics[width=0.8\columnwidth]{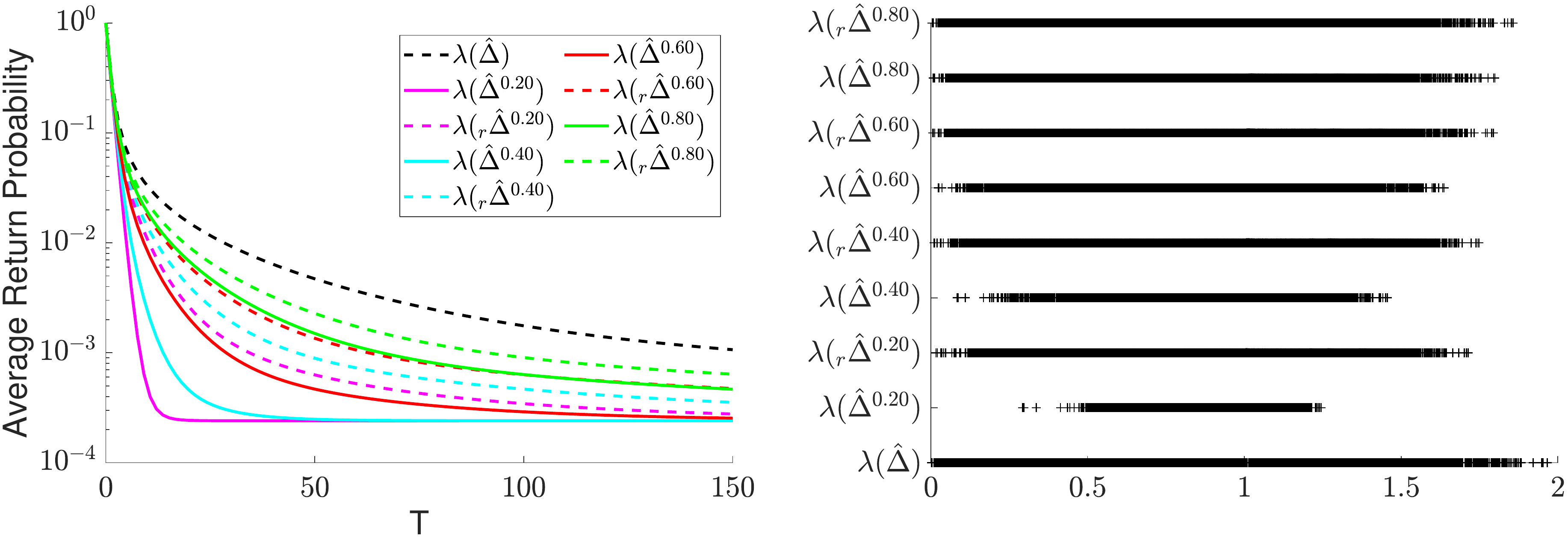}}
	
	\caption{Average return probability. The left panels represent the curves for $p_0^{(\alpha)}(t)$ (continuous lines) and for $\prescript{}{r}{p_0^{(\alpha)}}(t)$ (dashed lines), while the panels on the right represent the complete spectra for $\hat{\Delta}^\alpha$ and $\prescript{}{r}{\hat{\Delta}}^\alpha$.}
	
	\label{fig:average_return_probability}
\end{figure}

What we observe in all the cases is that the regularized Laplacian~$\prescript{}{r}{\hat{\Delta}}^\alpha$ still enhances the exploration of the underlying network, i.e., the $\prescript{}{r}{p_0^{(\alpha)}}(t)$ are always below the one relative to the \emph{local} random walk, while being ``slower'' than the corresponding version induced by~$\hat{\Delta}^\alpha$. Indeed, maintaining the relative weights of the local part of the walks slows down the exploration. This can also be seen by looking at the behavior of the eigenvalues of the respective normalized Laplacian matrices. To improve the latter behavior, we can test the parametric version of the regularization in~\eqref{eq:regularized_weight_withscale} by taking the heuristic for the $\beta$ parameter from~\eqref{eq:heuristic_for_beta}. In Figure~\ref{fig:returnprobability_rebalanced} we compare the average return probability for the different versions.
\begin{figure}[htbp]
    \centering
    \subfloat[\texttt{Karate} network, $n = 34$ nodes]{\includegraphics[width=0.8\columnwidth]{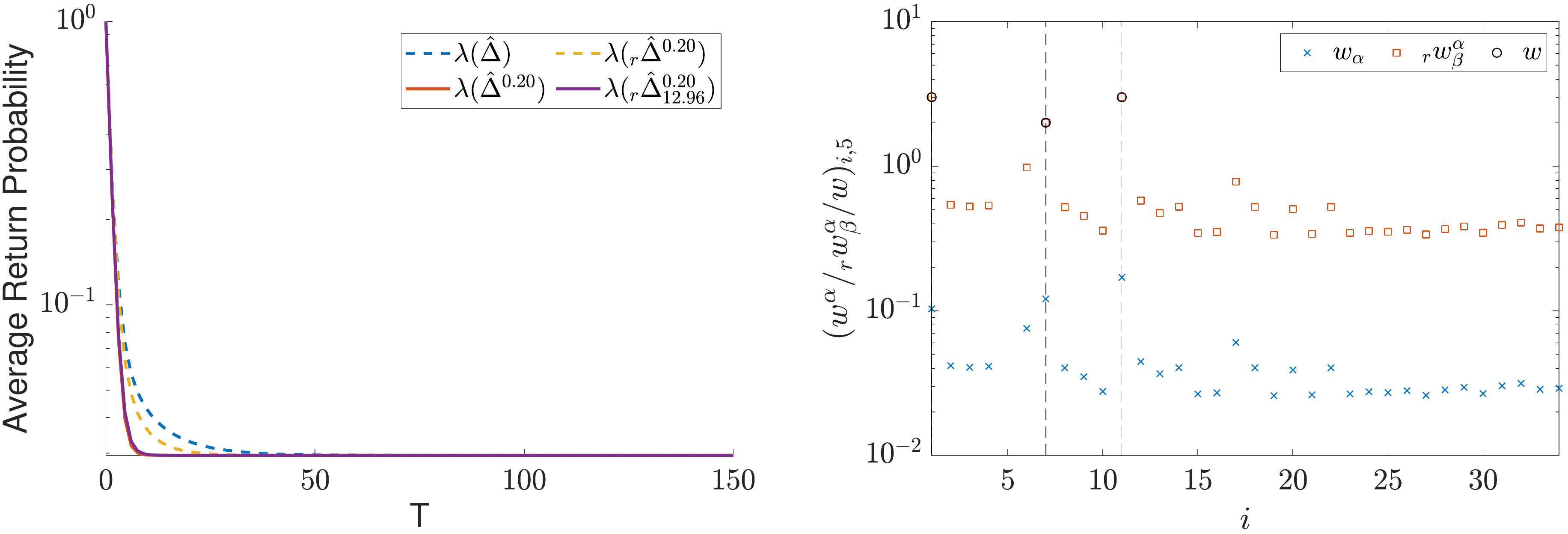}}
    
    \subfloat[\texttt{Netscience} network, $n = 379$]{\includegraphics[width=0.8\columnwidth]{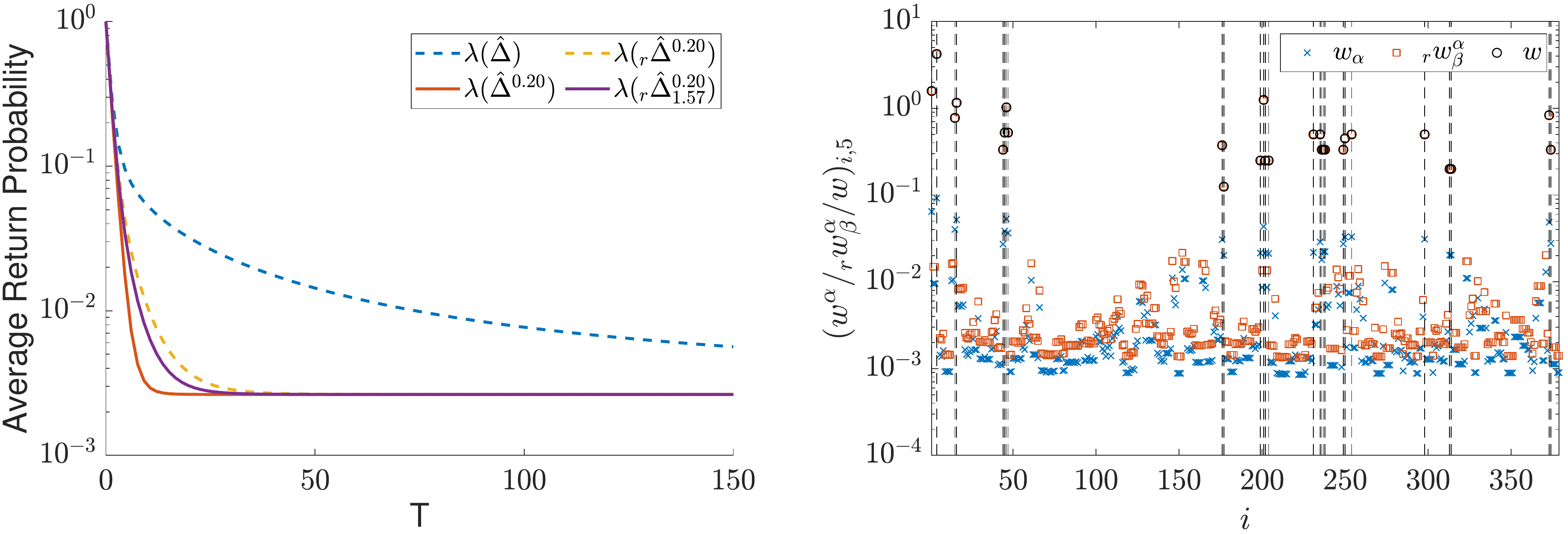}}
    
    \caption{Comparison of the average return probability for the random walks on $G$, $G^\alpha$, $\prescript{}{r}{G}^\alpha$ and $\prescript{}{r}{G}^{\alpha,\beta}$, where the latter is the graph obtained with the parametric version of the regularization in~\eqref{eq:regularized_weight_withscale} for the choice of $\beta$ given in~\eqref{eq:heuristic_for_beta}. The right panels reports the magnitude of the weights $G$, $\prescript{}{r}{G}^\alpha$, and $\prescript{}{r}{G}^{\alpha}_{\beta}$ on a fixed row to showcase the increase in magnitude given by the parameter $\beta$. The vertical dashed lines represent the position of the non-zero weight on the original graph $G$.}
    \label{fig:returnprobability_rebalanced}
\end{figure}
What we observe is that increasing the $\beta$ parameter, for large values of $\beta$ we get better exploration of the underlying network while keeping both the embedding of the original dynamics in the fractional one, and the decay behavior for the probabilities with respect to the length of the jumps. We stress that the choice for $\beta$ in~\eqref{eq:heuristic_for_beta} can be improved while keeping the decay behavior compatible with the result in Proposition~\ref{pro:decaybound}. Finer choices can be possible by analyzing more precisely the values of the weights.

\subsection{Average trapping and mean first passage times}\label{sec:numexp_att} For a graph $G$ with transition probability matrix $P$ we define the \emph{mean first passage time} from node $x_i$ to node $x_j$, denoted by $F_{i,j}$, as the expected time for a walker starting from node
$x_i$ to first reach node $x_j$. The \emph{average trapping time} $\overline{F}_{j}$ for the same random walk is defined as the average of $F_{i,j}$ over all starting vertices $x_i$ to a given trap node $x_j$.

Both quantities can be expressed in terms of the stationary distribution $\pi$ for the row-stochastic matrix $P$ associated with $G$~\cite{ZhangRandomWalks}, indeed 
\begin{equation}\label{eq:stationarydistribution}
    \pi = (\pi_1,\ldots,\pi_n) = \left( \frac{\mu(x_1)}{\mu},\ldots,\frac{\mu(x_n)}{\mu}\right), \quad \mu=\sum_{i=1}^{n} \mu(x_i), 
\end{equation}
therefore
\begin{equation*}
    F_{i,j} = \frac{1}{\pi_j} \sum_{k=2}^{n} \frac{1}{1-\lambda_k} \left(\psi_{k,j}^2 - \psi_{k,i}\psi_{k,j} \sqrt{\frac{\mu(x_j)}{\mu(x_i)}}\right),
\end{equation*}
where $\psi_{j}$ is the $j$th normalized eigenvector of $P$, and
\begin{equation*}
    \overline{F}_{j} = \frac{1}{1 - \pi_j} \sum_{i=1}^{n} \pi_i F_{i,j}.
\end{equation*}

Let us first compare the stationary distributions for transition probability matrices associated with $G^\alpha$ and with $\prescript{}{r}{G}^{\alpha}$, for a weighted version of an undirected Barab\'asi-Albert network~\cite{Barabasi509} in which the weight function $w(x_i,x_j) = (\deg(x_i)\deg(x_j))^{\theta}$, $\theta \in (0,1)$. This is an example of a scale-free network, i.e., of a network whose degree distribution follows a power law. From what we have seen in Proposition~\ref{pro:decaybound} and the expression in~\eqref{eq:stationarydistribution}, we expect that the stationary distribution $\prescript{}{r}{\pi^\alpha}$ of $\prescript{}{r}{G}^{\alpha}$ will be almost indistinguishable from the one on the original graph $G$. The weights we have added to the new edges of the supergraph are decaying with a power-law behavior, and thus are much smaller than the weights on the edges of the original graph. 
\begin{figure}[htbp]
    \centering
    \input{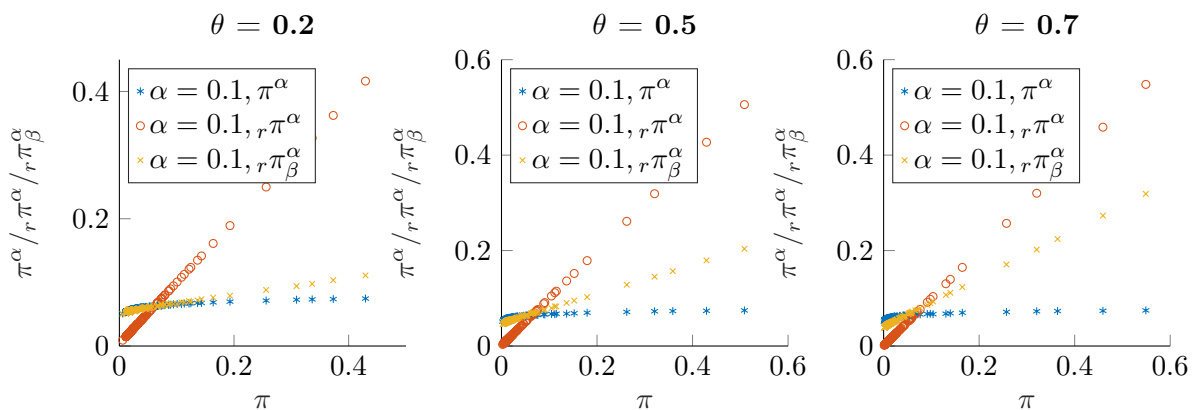}
    \caption{Scatter plot of the stationary distributions of $\prescript{}{r}{G}^{0.1}$, $G^{0.1}$ and $G^{0.1}_{\beta}$ with respect to the distribution $\pi$ of $G$ for a Barab\'asi-Albert scale-free network of $n=300$ nodes with weights $w(x_i,x_j) = (\deg(x_i)\deg(x_j))^{\theta}$.}
    \label{fig:stationarydistribution}
\end{figure}
The scatter-plot in Figure~\ref{fig:stationarydistribution} confirms that the values of $\prescript{}{r}{G}^{\alpha}$ and $\pi$ are aligned, and that the usage of the $\beta$ parameter in~\eqref{eq:regularized_weight_withscale} permits to move in between the two behaviors, i.e., to maintain the embedding/compatibility, and to enhance the exploration that is having smaller trapping times. Requesting the preservation of the original dynamics inside the supergraph without re-scaling the other weights has indeed a strong effect on the stationary distribution. This is then reflected also in the average trapping times that are reported for the same network and $\theta = 0.2$ in Figure~\ref{fig:att_barabasialbert}.
\begin{figure}[htbp]
    \centering
    \input{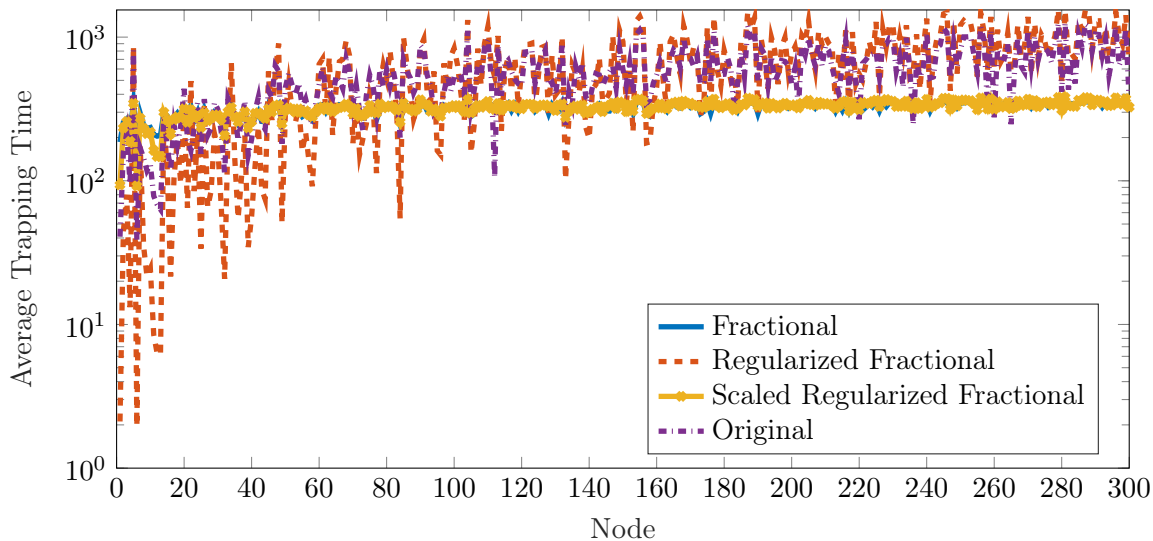}
    \caption{Average trapping time for all the nodes of the networks $G$, $\prescript{}{r}{G}^{0.1}$, $G^{0.1}$ and $G^{0.1}_{\beta}$ with respect to the distribution $\pi$ of $G$ for a Barab\'asi-Albert scale-free network of $n=300$ nodes with weights $w(x_i,x_j) = (\deg(x_i)\deg(x_j))^{0.2}$.}
    \label{fig:att_barabasialbert}
\end{figure}
This behavior suggests that, whenever we enforce an exact conversation of the bias given by the original weights of the network, the asymptotic measure will be pretty much indistinguishable from the one of the original network. This is indeed also confirmed when we apply the same procedure to the \texttt{Karate} network, as depicted in Figure~\ref{fig:average_trapping_time_karate}. 
\begin{figure}[htbp]
    \centering
    \includegraphics[width=0.8\columnwidth]{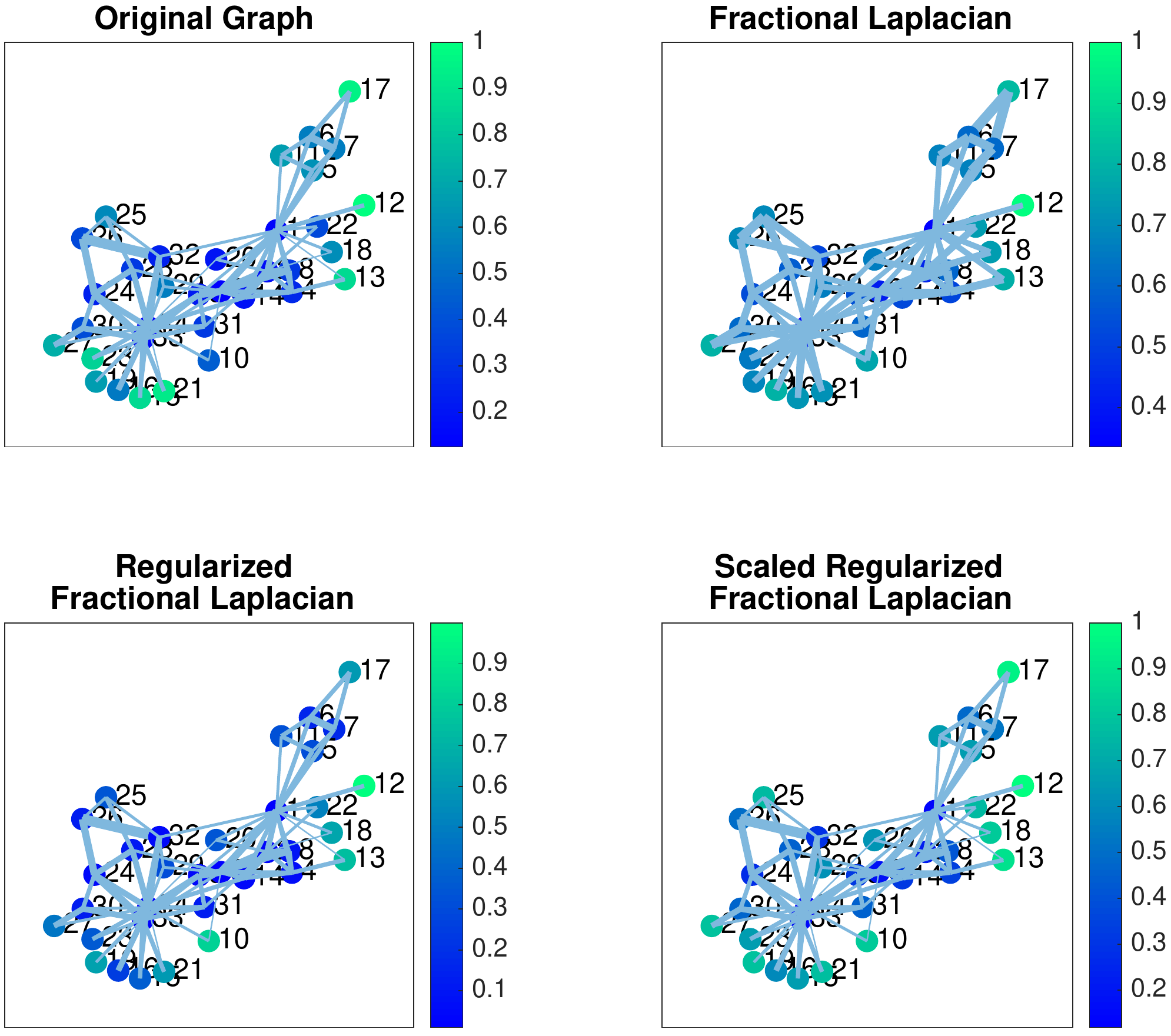}
    \caption{Average trapping time for all the nodes of the \texttt{Karate} network. The up-left panel is relative to the dynamics induced on $\hat{G}$, the up-right one is for the dynamics induced on $G^{0.2}$, the down-right one for the dynamics induced on $\prescript{}{r}{G}^{0.2}$, and the last one for $G^{0.2}_{16.85}$. In all the cases we have depicted only the edges of the original graph $G$, the width of the edges is proportional to their weight, while the color of the nodes represent the Average Trapping Time. To better confront them all the times have been normalized to be in $[0,1]$.}
    \label{fig:average_trapping_time_karate}
\end{figure}
On the one hand, we observe that the regularized fractional Laplacian from \eqref{eq:regularized_weight} exactly preserves the weights on the edge of the original graph $G$ (compare the width of the depicted edges), on the other hand the average trapping time between the original graph and the one induced by the regularized fractional Laplacian are very similar and we can use again the parameter $\beta$ in~\eqref{eq:regularized_weight_withscale} to adjust the overall behavior. 

\subsection{Stochastic equivalence for $G_\alpha$} We briefly consider here a numerical example for the stochastic equivalence for the case of the path graph Laplacian. This is an example on a graph with a more complex topology than the one in Counterexample~\ref{cexample:dpathlaplacian}. In Figure~\ref{fig:kpathexample} we consider the case of the Mellin transformed path Laplacian combined with the shortest-path distance, for $\alpha = 2$. We build it from the description in~\eqref{eq:path-graph-operator} by selecting $h_2(t) = t^{-2}$, where $d_w(x_i,x_j)$ is the usual extension of the shortest path distance $\delta$ for a graph $G$ with weight function $w:X\times X\rightarrow [0,+\infty)$, i.e., the distance $d_w(x_i,x_j)$ is the value of the minimal sum $\displaystyle \sum w(x_p,x_q)$ over all possible paths, i.e.,
\begin{equation*}
    d_w(x_i,x_j) = \arg \min_{x_i \sim \ldots \sim x_j} \left\lbrace \sum_{x_p \sim x_q} w(x_p,x_q) \,:\, x_i \sim \ldots \sim x_p \sim x_q \sim \ldots \sim x_j \right\rbrace.
\end{equation*}
We readily observe that the weights on the modified graph $G_2$ relative to the edges that were originally in $G$ are distributed differently from the ones on $G$.
\begin{figure}[htbp]
    \centering
    \subfloat[Depiction of the graphs $G$ (left panel) and $G_2$ (right panel). The edges on the original $G$ have a width that is proportional to their weight. The added edges in $G_2$ are reported as dashed lines.\label{fig:kpathexample_a}]{\includegraphics[width=0.8\columnwidth]{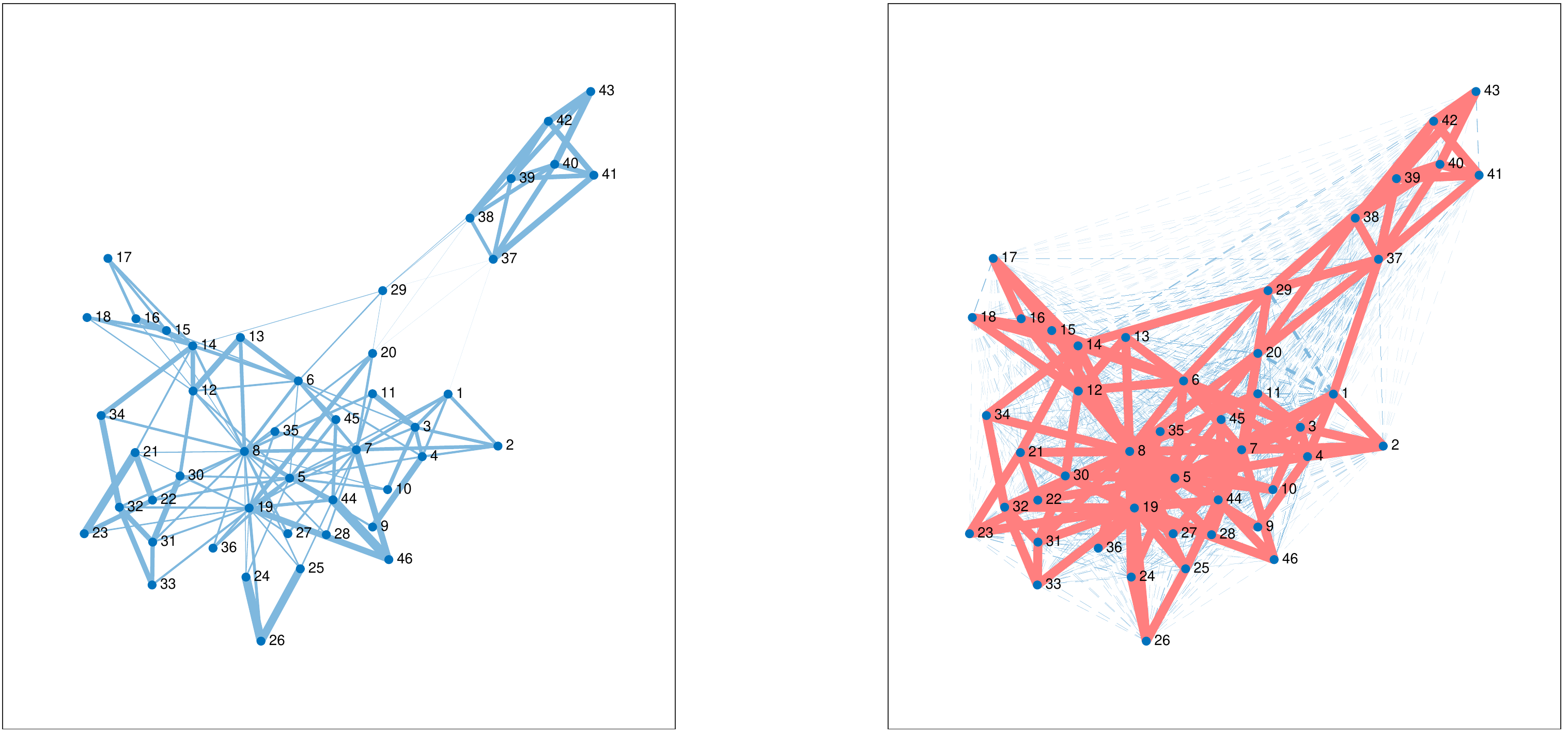}}
    
    \subfloat[Compatibility condition for the weights of $G$ and $G_\alpha$ for $G$ the \texttt{GD97\_b} graph. The edges are numbered in lexicographical order with respect to the number associated to the vertex.\label{fig:kpathexample_b}]{% This file was created by matlab2tikz.
%
%The latest updates can be retrieved from
%  http://www.mathworks.com/matlabcentral/fileexchange/22022-matlab2tikz-matlab2tikz
%where you can also make suggestions and rate matlab2tikz.
%
\definecolor{mycolor1}{rgb}{0.00000,0.44700,0.74100}%
\begin{tikzpicture}

\begin{axis}[%
width=4.4in,
height=2in,
at={(0in,0in)},
scale only axis,
xmin=1,
xmax=132,
xticklabel style={rotate=80,font=\tiny},
xlabel style={font=\color{white!15!black}},
xlabel={Edge},
ymode=log,
ymin=0.1,
ymax=1000,
yminorticks=true,
ylabel style={font=\color{white!15!black}, align=center},
ylabel={$k_{\alpha,1}(x_i,x_j)/k_{\alpha,1}(x_i,x_k)$\\[1ex]$\forall x_i,x_j,x_k : x_i \sim x_j, x_i \sim x_k$},
axis background/.style={fill=white},
legend style={legend cell align=left, align=left, draw=white!15!black}
]
\addplot [color=mycolor1, mark=o, mark options={solid, mycolor1}]
  table[row sep=crcr]{%
1	4837.0422362595\\
2	7386.56860791678\\
3	2911.85184276841\\
4	3667.65308615847\\
5	217.973294643998\\
6	0.000401361626196457\\
7	2.61145225842\\
8	0.963699468939146\\
9	30.3926922734473\\
10	6.70816746683159\\
11	6.84954256719499\\
12	5.42213141484019\\
13	1.96779139543055\\
14	9.6667271394612\\
15	3.74373923274394\\
16	7.2550035133222\\
17	8.80058431164345\\
18	7.3387810092545\\
19	6.74264162772393\\
20	1.62868655296557\\
21	1.74842194182767\\
22	10.6278479803996\\
23	4.24254560580289\\
24	3.30257669933987\\
25	1.04351996899466\\
26	2.03198434587772\\
27	1.84364614876032\\
28	3.70483825753089\\
29	21.6939014178936\\
30	8.11967657066209\\
31	6.62576084901992\\
32	20.3578001432411\\
33	10.3438190570681\\
34	97.4227924387586\\
35	7.09567968444854\\
36	3.38413146276108\\
37	302.704155463923\\
38	18.1973314333806\\
39	72.726092971642\\
40	52.2467330998245\\
41	58.8854120140681\\
42	181.142955859462\\
43	15.7905777339905\\
44	27.4259992779276\\
45	17.2745984275192\\
46	26.6767569244685\\
47	40.3862577659462\\
48	1062.00852974169\\
49	0.980372485303606\\
50	0.480465671128772\\
51	9.57249939911569\\
52	3.02173758524209\\
53	26.0271999512206\\
54	41.3670983953615\\
55	0.726032392813202\\
56	0.768393982835303\\
57	3.10185950976238\\
58	6839.39005740915\\
59	1.36988715436983\\
60	2.5100726803785\\
61	0.34000900863634\\
62	3.0821397704138\\
63	6.4346447926317\\
64	2.07550077768835\\
65	6.72886368052921\\
66	11.980829240339\\
67	9.60813636404701\\
68	5.21753849391983\\
69	2.86111283379154\\
70	26.6711208001815\\
71	35.8517143188301\\
72	29.7963033918483\\
73	0.177695523688761\\
74	7.78869586215795\\
75	23.3983378842573\\
76	24.5909671030486\\
77	10.4125416892865\\
78	9.59749635127796\\
79	13.4031936136923\\
80	46.4614090306188\\
81	155.723717461294\\
82	3.12597013864091\\
83	0.476597819398709\\
84	1.07244729794956\\
85	0.412960626717472\\
86	26978.7320553946\\
87	0.952850126440701\\
88	0.687867647848107\\
89	0.233524232316086\\
90	1.34580136971969\\
91	7.42721043628108\\
92	1.64950274766338\\
93	1.87005596017833\\
94	1.72260167598371\\
95	3.76634691385538\\
96	6.09320108078126\\
97	3.25685481027585\\
98	2.65142192124528\\
99	29.0561957212657\\
100	119.76087170888\\
101	0.000314625041990273\\
102	0.148027652688415\\
103	10.7113396902449\\
104	13.1787029967818\\
105	1.55226164977554\\
106	16.048599964932\\
107	16.0492770698593\\
108	0.00176010825386157\\
109	0.828111747394502\\
110	2.16763725507964\\
111	8.1649253438184\\
112	7.6482797592303\\
113	2.15488421329207\\
114	3.98522528747426\\
115	8.34210464754045\\
116	12.0818739370467\\
117	24.7647032439798\\
118	174.225868081708\\
119	287.53801933253\\
120	2.19743801283222\\
121	5.78346669526406\\
122	18.9915616051841\\
123	5.8215104052034\\
124	7.18141085853088\\
125	8.0652084200183\\
126	6.10156447136986\\
127	2.3349614286619\\
128	4.03487879065255\\
129	2.35248907119952\\
130	3.07181034600703\\
131	3.53656034636706\\
132	78.9557077956691\\
};

\addplot [color=black, dashed, forget plot]
  table[row sep=crcr]{%
1	1\\
132	1\\
};
\end{axis}
\end{tikzpicture}%}
    \caption{Counterexample for the embedding of $\hat{G}$ in $\hat{G}_2$, when the Mellin transformed Laplacian is used combined with the shortest-path distance, and $G$ is the \texttt{GD97\_b} graph. The first two figures in \eqref{fig:kpathexample_a} depict the two graphs and highlight the weights of the edges by printing them with a proportional width; the plot \eqref{fig:kpathexample_b} depicts the compatibility condition in Lemma~\ref{lem:embedding_path-graph}.}
    \label{fig:kpathexample}
\end{figure}
Indeed, this is confirmed when we look explicitly at Figure~\ref{fig:kpathexample_b}, where we check condition \eqref{eq:001} in Lemma~\ref{lem:embedding_path-graph}. The ratio between the weights of the corresponding graphs is far from being one, and thus the embedding of $\hat{G}$ in $\hat{G}_2$ is impossible. This leads to the incompatibility of the dynamics. Similar results can be obtained for other values of $\alpha$, using the Laplace transformed Laplacian, or changing the distance defining the $k_{\alpha,n}$ as described in Section~\ref{ssection:path_Laplacian}. While path-Laplacians preserve the original dynamics for the combination of unweighted graphs and shortest path distance, this property is in general lost when moving to weighted settings or making use of other distances.

\section{Conclusions}\label{sec:conclusions}
In this work we have shown that generating non-local dynamics from the graph Laplacian of a starting (finite) graph can have some drawbacks. If we still care about the original model then some regularization is mandatory, otherwise in general the new dynamics will cease to be compatible with the dynamics of the model graph itself. This awareness leads to new interesting questions for future investigations: what does happen in the infinite graph case? As a matter of fact, the graph Laplacian can approximate second-order linear differential operators, see \cite{Burago2014} and \cite{Adriani2020} for some applications. Therefore, can we infer the same conclusions about compatibility and embedding for the continuous Laplacian? This last question has an application purpose as well since, even in the continuous setting, many models that implement anomalous diffusion are based on the (spectral) fractional Laplacian, see e.g. \cite{Metzler2000,Donatelli2016}. 

\section*{Acknowledgement}
The first author is member of GNAMPA-INdAM. The work of all authors has been partially supported by the GNCS-INdAM.

\printbibliography	
\end{document}